\newtheorem{theorem}{Theorem}
\newtheorem{lemma}[theorem]{Lemma}
\newtheorem{prop}[theorem]{Proposition}
\theoremstyle{definition}
\newtheorem{definition}[theorem]{Definition}
\newtheorem{remark}[theorem]{Remark}
 \newtheoremstyle{numero}
{1ex}
{1ex}
{\it}
{1cm}
{\scshape}
{}
{4pt}
{}
\theoremstyle{numero}
\newtheorem{num}{}[section]
\newtheorem{sousnum}{}[num]
\newenvironment{pf}
{\medskip\noindent {\it Proof --- \ }}
{\hfill\nobreak $\Box$ \par\bigbreak}
\newcommand{\mm}{\mathfrak{m}}
\newcommand{\Q}{{\bf{Q}}}
\newcommand{\Z}{{\bf{Z}}}
\newcommand{\C}{\bf{C}}
\newcommand{\End}{\mathrm{End}}
\newcommand{\Frob}{{\rm{Frob}}}
\newcommand{\GL}{\mathrm{GL}}
\newcommand{\Gl}{\mathrm{GL}}
\newcommand{\Gal}{\mathrm{Gal}}
\newcommand{\F}{{\bf{F}}}
\newcommand{\N}{{\bf{N}}}
\newcommand{\SL}{{\rm SL}}
\newcommand{\PGL}{\rm PGL}
\newcommand{\PSL}{\rm PSL}
\newcommand{\tr}{\mathrm{tr}\,}
\newcommand{\mat}[1]{ \left( \begin{matrix} #1 \end{matrix} \right)}
\newcommand{\rhob}{{\bar \rho}}
\renewcommand{\sf}{{\text{sf}}}
\newcommand{\Sc}{{\mathcal S}}
\newcommand{\Zc}{{\mathcal Z}}
\newcommand{\Uc}{{\mathcal U}}
\newcommand{\Nc}{{\mathcal N}}
\newcommand{\Fc}{{\mathcal F}}
\newcommand{\Bc}{{\mathcal B}}
\renewcommand{\Mc}{{\mathcal M}}
\renewcommand{\sf}{{\text{sf}}}
\date{}
\title{The number of non-zero coefficients of modular forms $\pmod p$}
\author{Joël BELLAÏCHE\,\and \,Kannan SOUNDARARAJAN}
\thanks{Joël Bellaïche was supported by NSF grant DMS 1101615. Kannan Soundararajan was partially supported by NSF grant DMS 1001068, and a Simons Investigator grant from the Simons Foundation.}
\address{Department of Mathematics, MS 050, Brandeis University, 415 South Street, Waltham, MA 02453} 
\email{jbellaic@brandeis.edu}
\address{Department of Mathematics, Stanford University, Stanford, CA 94305} 
\email{ksound@stanford.edu} 
\begin{document}
\baselineskip 17pt
\begin{abstract} Let $f = \sum_{n=0}^\infty a_n q^n$ be a modular form modulo a prime $p$, and let $\pi(f,x)$ be the number of non-zero coefficients $a_n$ for $n < x$. We give an asymptotic formula for $\pi(f,x)$; namely, if $f$ is not constant,
$$\pi(f,x) \sim   c(f) \frac{x}{(\log x)^{\alpha(f)}}  (\log \log x)^{h(f)},$$
where $\alpha(f)$ is a rational number such that $0 <\alpha(f) \leq 3/4$, $h(f)$ is a non-negative integer and $c(f)$ is a positive 
real number. We also discuss the question of the equidistribution of the non-zero values of the coefficients $a_n$.
\end{abstract}

\keywords{Modular forms modulo $p$, Hecke operators, Selberg--Delange's method} 
\subjclass[2010]{11F33, 11F25, 11N25, 11N37}
\maketitle

\section{Introduction}

\noindent Let $f = \sum_{n=0}^\infty a_n q^n$ be a holomorphic modular form of integral weight $k \geq 0$ and some level $\Gamma_1(N)$ such that the coefficients $a_n$ are integers. Let $p$ be a prime number. Serre \cite{SerreE} has shown that the sequence $a_n \pmod{p}$ is {\it lacunary}.  That is, the natural density of the set of integers $n$ such that $p \nmid a_n$ is $0$. 
More precisely, Serre gives the asymptotic upper bound
\begin{eqnarray} \label{serrebound} 
| \{ n<x, a_n \not \equiv 0\pmod{p} \}  | \ll \frac{x}{(\log x)^\beta},
\end{eqnarray}
where $\beta$ is a positive constant depending on $f$. 
Later, Ahlgren  \cite[Lemma 2.1]{ahlgren} established the following asymptotic lower bound:  Assume that $p$ is odd, 
and that there exists an integer $n \geq 2$ divisible by at least one prime $\ell$ not dividing $Np$ such that $p \nmid a_n$.  Then 
\begin{eqnarray} \label{ahlgrenbound} 
| \{ n<x, a_n \not \equiv 0\pmod{p} \} | \gg \frac{x}{(\log x)}.
\end{eqnarray}
Under the same hypothesis, this lower bound was recently improved by Chen (\cite{chen}): for every $K \geq 0$
 \begin{eqnarray} \label{chenbound} 
| \{ n<x, a_n \not \equiv 0\pmod{p} \} | \gg \frac{x}{(\log x)} (\log \log x)^K,
\end{eqnarray}
where the implicit constant depends on $K$. 

In this paper, we improve on these results (\ref{serrebound}), (\ref{ahlgrenbound}) and (\ref{chenbound})
by giving an asymptotic formula for $|\{ n<x, a_n \not \equiv 0\pmod{p} \}|.$ To describe our results, we slightly change our setting by working directly with modular forms over a finite field, which allows for more generality and more flexibility.

Let $p$ be an odd prime,\footnote{When $p=2$, similar but slightly different results may be obtained, cf. \cite{BN}.}  and
$N \geq 1$ an integer. We define the space of modular forms of level $\Gamma_1(N)$ with coefficients in $\F_p$, denoted by $M(N,\F_p)$, as the subspace of $\F_p[[q]]$ generated by the reductions modulo $p$ of the $q$-expansions at $\infty$ of all holomorphic modular forms of level $\Gamma_1(N)$ and some integral weight $k \geq 0$ with coefficients in $\Z$. For $\F$ a finite extension of $\F_p$, we define $M(N,\F)$ as $M(N,\F_p) \otimes_{\F_p} \F$.  Given $f$ in $M(N,\F)$ let 
$$ 
{\pi}(f,x) = | \{ n<x: a_n \neq 0\}|. 
$$ 


\begin{theorem} \label{main} Let $f=\sum_{n=0}^\infty a_n q^n \in M(N,\F)$, and assume that $f$ is not  constant; that is,  $a_n \neq 0$ for some $n \geq 1$.   Then there exists a rational number $\alpha(f)$ with $0<\alpha(f) \leq 3/4$, an integer $h(f) \geq 0$, and a positive real constant $c(f) > 0$, such that
$$
{\pi}(f,x) \sim c(f) \frac{x}{(\log x)^{\alpha(f)}}  (\log \log x)^{h(f)}.
$$
\end{theorem}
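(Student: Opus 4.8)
The plan is to convert $\pi(f,x)$ into the partial sums of a complex Dirichlet series with non‑negative coefficients and then run the Landau--Selberg--Delange method; the first move is to pass to a finite‑dimensional setting. After multiplying a characteristic‑zero lift of $f$ by suitable powers of the Eisenstein series $E_{p-1}\equiv 1\pmod p$, one may assume $f$ is the reduction of a single form of weight $k$ and level $\Gamma_1(N)$, so the $\F$‑span $V_f$ of the Hecke translates $\{T_nf:n\ge1\}$ lies inside the reduction of $M_k(\Gamma_1(N))$ and is \emph{finite‑dimensional}, with $a_n=a_1(T_nf)$ for all $n$ (using $U_p,U_\ell$ at the bad primes). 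Put $\rho(n):=T_nf\in V_f$ and let $\T\subseteq\End(V_f)$ be the commutative finite algebra generated by all these operators, so $V_f=\T f$. As $V_f$ is a cyclic $\T$‑module, the restriction of $T\in\T$ to $V_f$ is determined by $Tf$; this lets one equip the finite set $V_f$ with the commutative associative product $v\star w:=T_v(w)$, where $T_v\in\T$ is any operator with $T_vf=v$. Then $(V_f,\star)$ is a finite commutative monoid with identity $f$, the map $\rho$ is multiplicative on coprime arguments, and $(\rho(\ell^a))_{a\ge0}$ obeys the fixed linear recursion $\rho(\ell^{a+1})=T_\ell\,\rho(\ell^a)-S_\ell\,\rho(\ell^{a-1})$ from the Hecke relations at $\ell$, hence is eventually periodic with period and preperiod bounded independently of $\ell$. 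Let $M$ be the submonoid of $(V_f,\star)$ generated by $\{\rho(n):n\ge1\}$ and pass to the complex monoid algebra $R:=\C[M]$, with basis $\{e_v\}_{v\in M}$ ($e_f=1$, $e_ve_w=e_{v\star w}$). Multiplicativity on coprime arguments then yields the \emph{genuine Euler product}
\[
\mathcal D(s):=\sum_{n\ge1}e_{\rho(n)}\,n^{-s}=\prod_\ell\mathcal F_\ell(s),\qquad \mathcal F_\ell(s):=\sum_{a\ge0}e_{\rho(\ell^a)}\,\ell^{-as}\in R[[\ell^{-s}]],
\]
each $\mathcal F_\ell$ a rational function of $\ell^{-s}$ with $\mathcal F_\ell(s)=1+e_{\rho(\ell)}\ell^{-s}+O(\ell^{-2\Re s})$ uniformly in $\ell$. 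Finally, with $\phi=a_1:V_f\to\F$ and $\xi:R\to\C$ the linear functional sending $e_v$ to $1$ if $\phi(v)\neq0$ and to $0$ otherwise, one has $\pi(f,x)=\sum_{n<x}\xi(e_{\rho(n)})$, so everything is encoded in the Dirichlet series $\Pi(s):=\xi(\mathcal D(s))$, whose coefficients lie in $\{0,1\}$.

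\textbf{Step 2: the shape of $\Pi(s)$ near $s=1$.} By the theory of the Galois pseudo‑representation carried by $\T$, the operator $T_\ell$ — hence $\rho(\ell)$, and in fact the whole local factor $\mathcal F_\ell(s)$ — depends on $\ell$ only through the Frobenius class of $\ell$ in a fixed finite quotient $\Gamma$ of $\Gal(\overline{\Q}/\Q)$ (taken large enough to also record $\ell\bmod Np$). Grouping primes by type and applying the Chebotarev density theorem, $\sum_\ell e_{\rho(\ell)}\ell^{-s}=\mathcal A\log\frac1{s-1}+(\text{holomorphic near }s=1)$, where $\mathcal A=\sum_v d_v e_v\in R$, with $d_v\in\Q_{\ge0}$ the density of $\{\ell:\rho(\ell)=v\}$ and $\sum_v d_v=1$. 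Taking logarithms in the Euler product — the $O(\ell^{-2\Re s})$ tails and the finitely many bad factors are holomorphic at $s=1$ — gives $\mathcal D(s)=(s-1)^{-\mathcal A}G(s)$ with $G$ holomorphic at $s=1$, as $R$‑valued functions near $s=1$. Decomposing $R=\prod_iR_i$ into local Artinian factors (each with residue field $\C$), the $R_i$‑component of $\mathcal A$ is $t_i+\nu_i$ with $t_i\in\C$ scalar and $\nu_i$ nilpotent, so in $R_i$ one has $(s-1)^{-(t_i+\nu_i)}=(s-1)^{-t_i}$ times a polynomial in $\log\frac1{s-1}$ of degree below the nilpotency index of $\nu_i$; consequently
\[
\Pi(s)=\sum_i(s-1)^{-t_i}\,Q_i\!\Bigl(\log\tfrac1{s-1}\Bigr)G_i(s),
\]
each $G_i$ holomorphic at $s=1$ and each $Q_i$ a polynomial, possibly zero, with $Q_i\neq0$ only for factors $R_i$ on which $\xi$ does not vanish.

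\textbf{Step 3: the Tauberian step and the constraints on $\alpha(f)$.} The eventual periodicity of $(\rho(\ell^a))_a$ forces each algebra character $\chi_i:R\to\C$ (a character of the monoid $M$) to take values $0$ or roots of unity, and $\chi_i(f)=1$, so $|t_i|\le1$. Serre's bound (\ref{serrebound}), $\pi(f,x)=o(x)$, shows $\Pi$ has no pole of order $\ge1$ at $s=1$, which in the representation above forces $\Re(t_i)<1$ for every contributing factor — in particular $\xi$ kills the augmentation factor, for which $t_i=1$ — so $\alpha(f)>0$. Since $\Pi$ has non‑negative coefficients and, by the Ahlgren--Chen lower bound (\ref{chenbound}), abscissa of convergence $s=1$, Landau's theorem places its rightmost singularity at the real point $s=1$; the Selberg--Delange method then shows that the maximum of $\Re(t_i)$ over contributing factors is attained at a real rational number $t_{i^\ast}$ with $Q_{i^\ast}\neq0$, and delivers
\[
\pi(f,x)\sim c(f)\,\frac{x}{(\log x)^{\alpha(f)}}\,(\log\log x)^{h(f)},\qquad \alpha(f):=1-t_{i^\ast}\in\Q_{>0},\quad h(f):=\deg Q_{i^\ast}\in\Z_{\ge0},
\]
with $c(f)>0$ because $f$ is non‑constant (so $\Pi$ is an infinite series and the leading Tauberian constant is a positive residue). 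The bound $\alpha(f)\le 3/4$, equivalently $t_{i^\ast}\ge 1/4$, comes down — after reducing to the semisimplification of $V_f$ and recognising $1-t_{i^\ast}$ as a Frobenius density — to the elementary group‑theoretic fact that in a finite subgroup of $\PGL_2(\overline{\F}_p)$ the proportion of involutions is at most $3/4$, with equality only for a Klein four‑group; equivalently, the density of primes $\ell$ at which the relevant two‑dimensional residual representation has trace‑zero Frobenius is at most $3/4$.

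\textbf{Main obstacle.} The genuinely hard part is the arithmetic input behind Steps 2--3: one must describe the $\T$‑module $V_f$ precisely enough — above all its \emph{nilpotent} structure, which is exactly what produces the $(\log\log x)^{h(f)}$ factor and which can be made arbitrarily complicated — using the structure theory of the mod $p$ Hecke algebra and large‑image results for the associated Galois deformations, and then verify that the dominant term of $\Pi(s)$ is of the clean, real, polynomial‑in‑$\log$ shape Selberg--Delange demands rather than an oscillating one; obtaining the sharp inequality $\alpha(f)\le 3/4$ belongs to the same circle of ideas.
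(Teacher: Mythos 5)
You take a genuinely different route from the paper --- a monoid-algebra-valued Euler product $\mathcal D(s)=\prod_\ell\mathcal F_\ell(s)$ in $R=\C[M]$, recovering $\pi(f,x)$ via the linear functional $\xi$, rather than the paper's explicit decomposition of $Z(f)$ by the data $(f',f'',h)$ counted through Theorems~\ref{thm5} and~\ref{delangemethod}. The construction of $(V_f,\star)$, the multiplicativity of $n\mapsto e_{\rho(n)}$ on coprime arguments, and the factorization $\mathcal D(s)=(s-1)^{-\mathcal A}G(s)$ via Chebotarev and the pseudo-representation are sound, and this is an attractive repackaging of the same arithmetic input. The gap, which you flag yourself in the closing paragraph, is in Step 3, and it is not a detail but the crux of the theorem: Landau's theorem places a singularity at $s=1$, but it does \emph{not} by itself force the dominant exponent $t_{i^\ast}$ to be real, does not yield the strict spectral gap needed to make the other contributing $t_i$ lower order, and says nothing about the degree of $Q_{i^\ast}$ or the positivity of the leading constant.

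Concretely, three things are missing. (a) One needs that among the non-augmentation characters $\chi$ of $M$, ${\rm Re}\,\chi(\mathcal A)$ is maximized exactly by the characters trivial on the group of units $\Gamma_f=\tau_f(\Uc_f)$, with a uniform gap $\delta>0$ for all others; this is precisely what Theorem~\ref{thm5} of the paper establishes, via orthogonality of characters of $\Gamma_f$ and the fact that $\tau_0(\Uc_f)$ generates $\Gamma_f$, and your framework contains no substitute. Moreover, for a form $f$ that is not pure, some $T_\ell$ on $Af$ is neither invertible nor nilpotent, and you have no analogue of the canonical decomposition into pure forms (\ref{decfpure}) and the cross-term estimate $\pi(f_i,f_j,x)=o(x/(\log x)^{\alpha(f)}(\log\log x)^{h(f)})$ carried out in \S\ref{proofasymp2}. (b) ``$c(f)>0$ because $f$ is non-constant'' is not an argument: the positivity of the leading constant is a theorem, established in the paper through Proposition~\ref{distinct} (there exist $h(f)$ \emph{distinct} primes $\ell_i\in\Nc_f$ with $T_{\ell_1}\cdots T_{\ell_{h(f)}}f\neq0$) and Proposition~\ref{squarefreenonzero} (a nonzero $f\in\Fc$ has a nonzero coefficient at a square-free index), both of which rest on the pseudo-representation, Lemma~\ref{finitequot}, and Proposition~\ref{generatorsHecke}; nothing in your sketch replaces these. (c) You assert but do not prove that $\deg Q_{i^\ast}$ equals the strict order of nilpotence $h(f)$. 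Until (a)--(c) are supplied the proof is incomplete; these are exactly the points the paper's pure-form decomposition, multi-frobenian machinery, and pseudo-representation lemmas are designed to handle.
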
 

When $f$ is an {\sl eigenform} for all Hecke operators $T_m$ (that is $T_m f = \lambda_m f$, $\lambda_m \in \F$),
this theorem was established by Serre \cite{SerreE}, and in this case one has $h(f)=0$.  
 However, the case of eigenforms is special because, as shown by Atkin, Serre, Tate and Jochnowitz in the seventies, there are only finitely many normalized eigenforms in the infinite dimensional space $M(N,\F)$. 
One can decompose every $f \in M(N,\F)$ as a finite sum $\sum_i f_i$ of 
{\sl generalized eigenforms\footnote{We call  a form $f \in M(N,\F)$ a generalized eigenform $f$ if for every $\ell$ not dividing $Np$, there exists  $\lambda_\ell \in \F$ and $n_l \in \N$ such that $(T_\ell-\lambda_\ell)^{n_\ell} f =0$.} $f_i$}, but this fact does not seem to 
be of immediate use, for two reasons.  The methods for treating genuine eigenforms do not seem to apply readily to generalized eigenforms, and moreover it is not clear how to obtain an asymptotic formula for ${\pi}(f,x)$ from asymptotics for ${\pi}(f_i,x)$.   

For $f$ an eigenform, the main tool in Serre's study is the  Galois representation over a finite field  attached to $f$ by Deligne's construction,  $\rhob_f: G_{\Q,Np} \rightarrow \Gl_2(\F)$.  To deal with a general modular form $f$ we replace $\rho_f$ by a two dimensional Galois {\sl pseudo-representation}, $t_f$,  of $G_{\Q,Np}$ over a {\sl finite ring} $A_f$.  The ring $A_f$ is obtained as the quotient of $A$ by the annihilator of $f$, where $A$ is the Hecke algebra acting on the space of modular forms $M(N,\F)$.  The ring $A_f$ is not in general a field. In fact, it is a field precisely when $f$ is an eigenform for the Hecke operators $T_\ell$ ($\ell \nmid Np$).  
The Hecke algebra $A$ (at least in the case of $\Gamma_0(N)$) was introduced and studied in the wake of Swinnerton-Dyer's work on congruences between modular forms by Serre, Tate, Mazur, Jochnowitz and others.  More recent progress on understanding its structure may be  found in \cite{NS, NS2, BK}.  
In section~\ref{heckealgebra}, we recall the definitions of the Hecke algebra $A$, its quotient $A_f$, the pseudo-representation $t_f$, and collect together the results we need about them.

To prove Theorem 1, we introduce the notion of a {\sl pure form}.  A form $f$ is {\sl pure} if every Hecke operator $T_\ell$ (with $\ell \nmid Np$) in $A_f$ is either invertible or nilpotent.   Generalized eigenforms are pure since the finite ring $A_f$ is local in this case, 
but there are pure forms that are not generalized eigenforms.  For pure forms we can give a reasonable description of the set of integers $n$ with $(n,Np)=1$ and such that $a_n\neq 0$, and using this and a refinement of the Selberg-Delange method (see \S \ref{selberg}) we deduce (in \S \ref{proofasymppure}) an asymptotic formula for the number 
of $n\le x$ with $a_n\neq 0$ and $(n,Np)=1$.    For a general $f$, we show in \S \ref{proofasymp2} that if $f =\sum_i f_i$ is a minimal decomposition of $f$ into pure forms, then ${\pi}(f,x)$ is asymptotically $\sum_i {\pi}(f_i,x)$.  To complete the proof of Theorem 1, it 
remains to handle coefficients $a_n$ with $(n,Np)>1$, and this is treated in \S \ref{pfasympgeneralcase}.

\par \bigskip
Theorem~\ref{main} gives an asymptotic for the number of $n<x$ such that $a_n \neq 0$, but says nothing about the number of $n<x$ such that $a_n = a$, where $a$ is a specific fixed value in $\F^\ast$.  Some partial results are given during the course of the proof of Theorem~\ref{main} in \S\ref{proofasymppure}.  We say that $f$ has the {\it equidistribution property} if   
 the number of $n<x$ such that $a_n =a$ is asymptotically the same for every $a \in \F^\ast$. In \S\ref{equidis} we give sufficient conditions, and in some cases, necessary conditions, for the equidistribution property.
\par \bigskip
In section \S\ref{sectionvariant} we consider a variant of the main theorem, where one counts only the non-zero coefficients at square-free integers of a modular form. 
\par \bigskip
Let us finally mention that the constants $\alpha(f)$, $h(f)$ and $c(f)$ of Theorem~\ref{main} can be effectively computed from our proof. This is done in some cases in the last section, \S\ref{sectionexamples}.  However, we do not have a satisfactory understanding of how $h(f)$ and $c(f)$ behave as $f$ varies.  
Such an understanding would require a more detailed study of the structure of the Hecke algebra $A$ and of the space $M(N,\F)$ as a Hecke-module than is currently available (except in the case $p=2$, $N=1$: see \cite{NS2} and \cite{BN}; and partially in the case $p=3$, $N=1$: see \cite{med}).

{\bf Acknowledgments.}  We are grateful to the referee for a very careful reading of the paper.

\section{Applications of the Landau-Selberg-Delange method}

\label{selberg}

\subsection{Frobenian and multi-frobenian sets}

If $\Sigma$ is a finite set of primes, and $L$ a finite Galois extension of $\Q$ unramified outside $\Sigma$ and $\infty$, then for any prime $\ell \not \in \Sigma$,
we denote by  $\Frob_\ell \in \Gal(L/\Q)$ an element of Frobenius attached to $\ell$. We recall that $\Frob_\ell$ is only well-defined up to conjugation in
$\Gal(L/\Q)$.

\begin{definition} \label{multiform} Let $h$ be a non-negative integer and $\Sigma$ a finite set of primes.  We say that a set $\Mc$ of positive integers is {\it $\Sigma$-multi-frobenian} of height $h$ if there exists a finite Galois extension $L$ of $\Q$ with Galois group $G$, unramified outside $\Sigma$ and infinity, and a subset $D$ of $G^h$ invariant under conjugation and under permutations of the coordinates, such that
$m \in \Mc$ if and only if $m=\ell_1 \dots \ell_h$ where the $\ell_i$ are distinct primes not in $\Sigma$, 
and $(\Frob_{\ell_1}, \dots, \Frob_{\ell_h}) \in D$.  For such a $\Sigma$-multi-frobenian set $\Mc$ we define its density $\delta(\Mc)$ to be 
$$
\delta(\Mc)= \frac{\# D}{h! (\# G)^h}.
$$
\end{definition} 
\par \medskip
Observe that the condition $(\Frob_{\ell_1},\dots, \Frob_{\ell_h}) \in D$ depends only on the product $\ell_1 \dots \ell_h$, since replacing each
$\Frob_{\ell_i}$ by a conjugate in $G$ amounts to replacing $(\Frob_{\ell_1},\dots, \Frob_{\ell_h})$ by a conjugate in $G^h$ and $D$ is invariant by conjugacy in $G^h$, and since changing the order of the prime factors $\ell_1,\dots,\ell_h$ permutes the components of  $(\Frob_{\ell_1},\dots, \Frob_{\ell_h})$ and $D$ is invariant by permutations.  Thus the notion of a multi-frobenian set is well defined.

There is only one $\Sigma$-multi-frobenian set of height $h=0$, namely $\{1\}$.
A $\Sigma$-multi-frobenian set of height $1$ is just a $\Sigma$-frobenian set of prime numbers in the usual sense, see \cite[\S3.3.1]{SerreNXp}.  In what follows we will say that a set is {\it multi-frobenian} if it is $\Sigma$-multi-frobenian 
for some finite set of primes $\Sigma$, and {\it frobenian} if it is multi-frobenian of height 1. We observe that this definition of frobenian is slightly 
more restrictive that the one used by Serre (cf. \cite[\S3.3.2]{SerreNXp}) for whom a set of primes is frobenian if it is frobenian in our sense up to a finite set of primes.
The more restrictive definition of frobenian that we adopt here will be sufficient for our purposes, and we hope that its use will 
cause no confusion to the reader.

 \begin{lemma} \label{lemmamultifrob} Let ${\mathcal M}$ be a multi-frobenian set of height $h$ and density $\delta({\mathcal M})$.  Then 
$$ 
\sum_{\substack{{m\in {\mathcal M}} \\ {m\le x}}} \frac{1}{m} \sim \delta({\mathcal M}) (\log \log x)^{h} .
$$  
\end{lemma}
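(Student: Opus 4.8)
The plan is to reduce the estimate for $\sum_{m \in \Mc, m \le x} 1/m$ to a statement about sums over $h$-tuples of distinct primes whose Frobenius data lie in $D$, and then to evaluate the latter by an $h$-fold summation, peeling off one prime at a time and invoking the Chebotarev density theorem (in the quantitative form sufficient to control $\sum_{\ell \le y} 1/\ell$ over a frobenian set of primes). Concretely, write $L/\Q$, $G$, and $D \subseteq G^h$ as in Definition~\ref{multiform}. Since $D$ is invariant under permutation of coordinates, each $m \in \Mc$ with $m = \ell_1 \cdots \ell_h$ (distinct $\ell_i \notin \Sigma$) is counted exactly $h!$ times by the unordered-to-ordered passage, so
$$
\sum_{\substack{m \in \Mc \\ m \le x}} \frac{1}{m} = \frac{1}{h!} \sum_{\substack{(\ell_1,\dots,\ell_h) \\ \ell_i \notin \Sigma \text{ distinct} \\ \ell_1\cdots\ell_h \le x \\ (\Frob_{\ell_1},\dots,\Frob_{\ell_h}) \in D}} \frac{1}{\ell_1 \cdots \ell_h} + O(1),
$$
where the $O(1)$ absorbs the at most $h$-bounded contribution of tuples with a repeated prime (these contribute $\sum_\ell 1/\ell^2 < \infty$ times a bounded factor) and any primes in $\Sigma$.

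Next I would drop the constraint $\ell_1 \cdots \ell_h \le x$ in favour of the weaker constraint that each $\ell_i \le x$: the difference between the two regions contributes a lower-order term. Indeed, for the inner region one shows by a standard induction on $h$ that $\sum_{\ell_1,\dots,\ell_h \le x,\ \prod \ell_i > x} 1/(\ell_1\cdots\ell_h) = o((\log\log x)^h)$ — heuristically each $1/\ell_i$ sum over $\ell_i \le x$ contributes $\log\log x$, and the extra condition $\prod \ell_i > x$ forces the $\ell_i$ to be large, which costs a factor tending to $0$. With each $\ell_i$ now ranging independently over primes $\le x$ with $\Frob_{\ell_i}$ in the appropriate "slice", the sum becomes, after grouping by conjugacy classes, a sum over functions $c : \{1,\dots,h\} \to \mathrm{Conj}(G)$ with $(c_1,\dots,c_h)$ "in $D$", of the product $\prod_{i=1}^h \big(\sum_{\ell \le x,\ \Frob_\ell = c_i} 1/\ell\big)$. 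By Chebotarev's theorem in the form $\sum_{\ell \le x,\ \Frob_\ell \subseteq C} 1/\ell = \tfrac{\#C}{\#G}\log\log x + O(1)$ for a conjugation-invariant $C \subseteq G$, each factor is $\tfrac{\#(\text{slice})}{\#G}\log\log x + O(1)$; multiplying out, the main term is $(\log\log x)^h$ times $\#D/(\#G)^h$, and the error terms are all $O((\log\log x)^{h-1})$.

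Assembling, $\sum_{m \in \Mc, m\le x} 1/m = \tfrac{1}{h!}\cdot \tfrac{\#D}{(\#G)^h}(\log\log x)^h + O((\log\log x)^{h-1}) = \delta(\Mc)(\log\log x)^h + o((\log\log x)^h)$, which is the claim. The main obstacle I anticipate is bookkeeping rather than conceptual: one must phrase Chebotarev for conjugation-invariant subsets (not just classes) and verify that the combinatorics of summing over coordinatewise-constant refinements of $D$ reproduces exactly $\#D$ — this uses that $D$ is conjugation-invariant in $G^h$, so membership depends only on the tuple of conjugacy classes of the coordinates. A secondary technical point is the clean justification that replacing $\prod \ell_i \le x$ by $\ell_i \le x$ only perturbs by $o((\log\log x)^h)$; this is where the height-$h$ induction and a Rankin-type argument on the tail are needed, but it is routine. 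The degenerate cases $h = 0$ (where $\Mc = \{1\}$ and both sides are $1$) and $h = 1$ (the classical frobenian case) serve as sanity checks.
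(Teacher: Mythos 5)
Your proposal is a fleshed-out version of the paper's proof, which is simply the one-liner ``This follows from the Chebotarev density theorem.'' Your route --- pass to ordered $h$-tuples, relax the constraints, apply a Mertens-type Chebotarev estimate $\sum_{\ell \le x,\ \Frob_\ell \in C} 1/\ell = \frac{\#C}{\#G}\log\log x + O(1)$ coordinate by coordinate, and use that $D$ is a union of products of conjugacy classes so that the main terms sum to $\#D/(\#G)^h$ --- is correct and is exactly what the authors intend.

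Two presentational glitches worth fixing. First, the $O(1)$ on your first displayed equation is misplaced: the identity there is \emph{exact} as written, since you have already imposed distinctness and $\ell_i\notin\Sigma$ in the sum. The genuine error terms enter in the next step, when you drop distinctness and replace $\prod_i\ell_i\le x$ by $\ell_i\le x$. Second, the size of that error: tuples with a repeated prime contribute $\ll (\log\log x)^{h-2}$ (not $O(1)$ when $h\ge 3$), since fixing two equal coordinates costs $\sum_\ell 1/\ell^2 = O(1)$ while the remaining $h-2$ coordinates each contribute a Mertens factor; and relaxing the product constraint costs $\ll (\log\log x)^{h-1}$, since at least one $\ell_i$ must exceed $x^{1/h}$ and $\sum_{x^{1/h}<\ell\le x}1/\ell = O(1)$. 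Both are $o((\log\log x)^h)$, so the conclusion is unaffected, but the exponents should be stated correctly.
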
 
\begin{pf}  This follows from the Chebotarev density theorem. 
\end{pf} 

Note in particular that $\delta(\Mc)$ depends only on the set $\Mc$ and not 
on the choice of $L$, $G$ and $D$.

\begin{remark} Using the Chebotarev density theorem, one may show that if $\Mc$ is a multi-frobenian set of height $h$, then
$$
| \{ n\le x: \ n\in {\mathcal M}\} |  \sim h \delta({\mathcal M}) \frac{x}{\log x}(\log \log x)^{h-1}.
$$
This formula clearly implies Lemma 3 by partial summation, but the weaker Mertens-type estimate of  Lemma 3 suffices for our purposes. 
\end{remark}  

\subsection{Square-free integers with prime factors in a frobenian set and random walks}

We begin with a general result of the Landau-Selberg-Delange type, which follows by the method discussed in 
Chapter II.5 of Tenenbaum's book \cite{Ten}, or as in Th\'eor\`eme 2.8 of Serre's paper \cite{SerreE}.   

\begin{prop}   \label{propsqfree} Let $a(n)$ be a sequence of complex numbers with $|a(n)|\le d_k(n)$ for some 
natural number $k$, where $d_k(n)$ denotes the $k$-divisor function defined by $\zeta(s)^k = \sum_{n=1}^{\infty} d_k(n)n^{-s}$.  Suppose that in the region Re$(s)>1$ the function $A(s)=\sum_{n=1}^{\infty} a(n)n^{-s} $ 
can be written as (for some real number $\alpha$)
$$ 
A(s) = \zeta(s)^{\alpha} B(s), 
$$ 
where $B(s)$ extends analytically to the region Re$(s)> 1- c/\log (2+|t|)$ for some positive constant $c$, and 
is bounded in that  region by $|B(s)| \le C(1+|t|)$ for some constant $C$.   Then for all $x\ge 3$ and any $J\ge 0$, 
there is an asymptotic expansion  
$$ 
\sum_{n\le x} a(n) = \sum_{j=0}^{J} \frac{A_j x}{(\log x)^{1+j-\alpha}} + O \Big( \frac{C x}{(\log x)^{J+2-\alpha}}\Big),  
$$ 
where the $A_j$ are constants, with 
$$
A_0= \frac{B(1)}{\Gamma(\alpha)},
$$ 
and the implied constant in the remainder term depends only on $c$, $k$, and  $J$.
\end{prop}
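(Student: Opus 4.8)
The plan is to run the classical Landau--Selberg--Delange argument: truncated Perron's formula, followed by a deformation of the contour onto a Hankel loop around the branch point $s=1$, as in Chapter II.5 of \cite{Ten} or Th\'eor\`eme 2.8 of \cite{SerreE}.

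First I would use the hypothesis $|a(n)|\le d_k(n)$: it guarantees absolute convergence of $A(s)$ for $\mathrm{Re}(s)>1$ and the bound $|A(\sigma+it)|\le\zeta(\sigma)^k$, so that on the line $\sigma=\kappa:=1+1/\log x$ one has $|A(\kappa+it)|\ll(\log x)^k$. An effective form of Perron's formula then gives
$$
\sum_{n\le x}a(n)=\frac{1}{2\pi i}\int_{\kappa-iT}^{\kappa+iT}A(s)\frac{x^s}{s}\,ds+O\Big(\frac{x(\log x)^{k}}{T}\Big),
$$
the $d_k$-estimates handling the standard contributions of integers near $x$; here $T$ is a parameter to be chosen, say $T=\exp(\sqrt{\log x})$. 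Next, from $\zeta(s)=\tfrac{1}{s-1}+\gamma+\cdots$ I write $\zeta(s)^\alpha=(s-1)^{-\alpha}Z(s)$, where $Z(s)=\big((s-1)\zeta(s)\big)^\alpha$ is holomorphic and non-vanishing in a fixed neighbourhood of $s=1$, and more generally in the de la Vall\'ee-Poussin zero-free region $\mathrm{Re}(s)>1-c'/\log(2+|t|)$ (for a suitable $c'\le c$). Thus $A(s)=(s-1)^{-\alpha}Z(s)B(s)$ is analytic in that region apart from the branch point at $s=1$, and I deform the segment $[\kappa-iT,\kappa+iT]$ to a truncated Hankel contour $\mathcal{H}$: horizontal segments at heights $\pm T$ running leftward to the curve $\mathrm{Re}(s)=1-c'/\log(2+|t|)$, a piece along that curve, and a small loop (a circle of radius $\asymp 1/\log x$ together with the two edges of the cut) encircling $s=1$; no singularity is crossed.

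On the part of $\mathcal{H}$ away from $s=1$ I use the classical estimates $|\zeta(s)|\ll\log(2+|t|)$ and $|\zeta(s)|^{-1}\ll\log(2+|t|)$ valid in the zero-free region, whence $|\zeta(s)^\alpha|\ll(\log(2+|t|))^{|\alpha|}$; combining this with the hypothesis $|B(s)|\le C(1+|t|)$ and with $|x^s|\le x\cdot x^{-c'/\log(2+|t|)}$ on the left boundary, the contribution of these segments is $O\big(Cx\exp(-c''\sqrt{\log x})\big)$, which is $\ll Cx/(\log x)^{J+2-\alpha}$. Together with the Perron error term (for the chosen value of $T$) this accounts for the remainder, with all implied constants depending only on $c$, $k$, $J$. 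The main term is then the integral over the small loop $\mathcal{H}$ around $s=1$. Writing $G(s)=Z(s)B(s)/s$, holomorphic near $s=1$ with Taylor expansion $G(s)=\sum_{j\ge0}g_j(s-1)^j$, and substituting $w=(s-1)\log x$, each term $\frac{1}{2\pi i}\int_{\mathcal{H}}(s-1)^{j-\alpha}x^s\,ds$ becomes $\frac{x}{(\log x)^{1+j-\alpha}}\cdot\frac{1}{2\pi i}\int w^{j-\alpha}e^w\,dw$; extending the truncated $w$-contour to the full Hankel contour at negligible cost and invoking Hankel's formula $\frac{1}{2\pi i}\int_{\mathcal{H}}w^{-z}e^w\,dw=1/\Gamma(z)$ yields $A_j=g_j/\Gamma(\alpha-j)$. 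Truncating the Taylor series at $j=J$ and bounding the tail by Cauchy's inequalities on a fixed disc around $s=1$ (on which $Z$ is holomorphic, uniformly in $x$) produces the displayed asymptotic expansion; in particular $A_0=g_0/\Gamma(\alpha)=Z(1)B(1)/\Gamma(\alpha)=B(1)/\Gamma(\alpha)$, since $(s-1)\zeta(s)\to1$ as $s\to1$.

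The main obstacle is the uniform bookkeeping in the third step: calibrating $T$ together with the radius of the small loop so that the Perron error, the horizontal-segment error, and the zero-free-boundary error all combine into exactly $O(Cx/(\log x)^{J+2-\alpha})$ with constants depending only on $c,k,J$, while keeping track of the linear dependence on $C$. A minor technical point, not really an obstacle, is that when $\alpha-j$ is a non-positive integer the factor $1/\Gamma(\alpha-j)$ vanishes, consistently with $(s-1)^{j-\alpha}$ being single-valued in that case so that the corresponding loop integral is zero.
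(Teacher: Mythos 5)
Your proposal follows exactly the Landau--Selberg--Delange argument that the paper sketches: truncated Perron's formula, deformation of the contour onto a (truncated) Hankel loop around the branch point $s=1$ staying inside the zeta zero-free region, Hankel's formula for the main terms, and the choice $T\approx\exp(c_1\sqrt{\log x})$ to balance the Perron error against the zero-free-region error. Your extra bookkeeping, such as the factorization $\zeta(s)^\alpha=(s-1)^{-\alpha}Z(s)$ and the identification $A_j=g_j/\Gamma(\alpha-j)$ giving $A_0=B(1)/\Gamma(\alpha)$, simply makes explicit what the paper leaves implicit.
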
 
\begin{pf} As mentioned above, this is a straightforward application of the Landau-Selberg-Delange method, and so 
we content ourselves with sketching the argument briefly.  The constant $c$ can be replaced by a possibly smaller constant so that $\zeta(s)$ has no zeros in the 
region Re$(s) >1-c/\log (2+|t|)$, and moreover in this region we have the classical bounds $|\zeta(s)^{\alpha}| \ll (\log (|s|+2))^{A|\alpha|}$ for some constant $A$ provided we stay away from $s=1$ (see for example II.3 of \cite{Ten}).    
 Next, by applying a quantitative version of Perron's formula we 
see that for $x\ge 3$ and with $x^{\frac{1}{10k}} \ge T \ge 1 $, 
$$ 
\sum_{n\le x} a(n) = \frac{1}{2\pi i} \int_{1+1/\log x -iT}^{1+1/\log x +iT} A(s) \frac{x^s}{s} ds + O\Big( \frac{x}{T} (\log x)^{k} \Big). 
$$ 
Now we deform the line of integration as follows.  First make a slit along the real line segment from 
$1-c/\log (T+2)$ to $1$.  Then from $1+1/\log x+iT$ we proceed in a straight line to $1-c/\log (T+2)+iT$ and 
from there to $1-c/\log (T+2)+ i 0^+$ (on the upper part of the slit) and proceed from there to $1$ and then circle around 
to the lower part of the slit until $1-c/\log (T+2) +i 0^-$ and from there to $1-c/\log (T+2) -iT$ and thence to $1+1/\log x -iT$.  
The integrand has a logarithmic singularity at $1$, and the change in the argument above and below the slit 
leads to the main terms in the asymptotic expansion (by ``Hankel's formula" see \S II.5.2 of Tenenbaum \cite{Ten}).  The 
remaining integrals are estimated using the bounds for $|\zeta(s)^{\alpha}|$ in the zero-free region, together with 
our assumed bound for $|B(s)|$.  The resulting error terms are bounded by $O(x^{1-c/\log (T+2)} (T+2) \log (T+2))$.  Choosing 
$T =\exp(c_1 \sqrt{\log x})$ for a suitably small positive constant $c_1$, we obtain the proposition. 
\end{pf}
   
 Now suppose we are given a frobenian set of primes $\Uc$ of density $\beta =\delta(\Uc)> 0$,  a finite abelian group $\Gamma$, and a frobenian map\footnote{A map from a frobenian set of primes to a finite set is said {\it frobenian} if its fibers are frobenian.}
 $\tau_0: {\Uc} \to \Gamma$ such that the image $\tau_0(\Uc)$ generates $\Gamma$.
  Using multiplicativity, extend $\tau_0$ to  a map $\tau$ from the set of square-free numbers composed of prime factors in ${\mathcal U}$ to $\Gamma$.  
 
 \begin{theorem} \label{thm5}  Let $g$ be any given element of $\Gamma$, and let $r$ be a positive integer. Then, for $x\ge 3$ and uniformly in $r$, we have 
 $$ 
 \#\{ n\le x: n \text{ square-free}, \ p \mid n \implies p\in {\Uc}, \ \tau(n)=g, \ (n,r)=1\}
 $$  
 equals 
 $$ 
 C({\mathcal U}, r)\frac{1}{|\Gamma|} \frac{x}{(\log x)^{1-\beta} }  
 + O\Big( \frac{x d(r)}{(\log x)^{1-\beta+\delta}}\Big),  
 $$ 
 where ${ \mathcal C}({\Uc},r) = \frac{1}{\Gamma(\beta)} \prod_{p} w_p$ with 
 $w_p=(1+1/p) (1-1/p)^{\beta}$ if $p\in {\Uc}$ with $p\nmid r$, and $w_p= (1-1/p)^{\beta}$ otherwise.  
In the remainder term above, $d(r)$ denotes the number of divisors of $r$, and $\delta$ is a fixed positive number (depending only on the 
 group $\Gamma$).  
 \end{theorem}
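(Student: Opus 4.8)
The plan is to reduce Theorem~\ref{thm5} to an application of Proposition~\ref{propsqfree} via a character sum over $\Gamma$. Since $\Gamma$ is a finite abelian group, for each $g\in\Gamma$ we may detect the condition $\tau(n)=g$ through orthogonality of characters:
$$
\mathbbm{1}_{\tau(n)=g}=\frac{1}{|\Gamma|}\sum_{\chi\in\widehat{\Gamma}}\chi(\tau(n))\overline{\chi(g)}.
$$
Fix a character $\chi$ and set $a_\chi(n)=\chi(\tau(n))$ if $n$ is square-free, composed only of primes in $\Uc$ that do not divide $r$, and $a_\chi(n)=0$ otherwise. Then $|a_\chi(n)|\le 1\le d_2(n)$, and the associated Dirichlet series factors as an Euler product over $p\in\Uc$, $p\nmid r$, of local factors $1+\chi(\tau_0(p))p^{-s}$. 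The main step is then to show that, in the region $\mathrm{Re}(s)>1$,
$$
A_\chi(s)=\prod_{\substack{p\in\Uc\\ p\nmid r}}\bigl(1+\chi(\tau_0(p))p^{-s}\bigr)=\zeta(s)^{\alpha_\chi}B_\chi(s)
$$
for an appropriate exponent $\alpha_\chi$ and a function $B_\chi$ holomorphic and polynomially bounded in a standard zero-free region $\mathrm{Re}(s)>1-c/\log(2+|t|)$.

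The exponent $\alpha_\chi$ is determined by the average of $\chi(\tau_0(p))$ over $p\in\Uc$. Here is where the hypotheses enter: because $\Uc$ is frobenian of density $\beta$ and $\tau_0$ is a frobenian map, the values $\chi(\tau_0(p))$ are governed, for $p$ outside a fixed finite set, by $\Frob_p$ in a fixed finite Galois extension $L/\Q$; by Chebotarev, $\sum_{p\in\Uc,\,p\le x}\chi(\tau_0(p))p^{-1}$ behaves like $\gamma_\chi\log\log x$ for a constant $\gamma_\chi$ equal to the mean value of $\chi\circ\tau_0$ over the relevant conjugacy classes, times $\beta$. Comparing Euler products, $\log A_\chi(s)=\sum_{p\in\Uc,p\nmid r}\chi(\tau_0(p))p^{-s}+O(1)$, so one obtains $\alpha_\chi=\gamma_\chi$, and in particular $\alpha_{\chi_0}=\beta$ for the trivial character, while for $\chi$ nontrivial $\mathrm{Re}(\alpha_\chi)<\beta$ strictly --- this strict inequality uses that $\tau_0(\Uc)$ generates $\Gamma$, so $\chi\circ\tau_0$ is not identically $1$ and the mean of $\chi$ over the union of the relevant Frobenius classes has modulus strictly less than one. (This is where $\delta>0$ comes from: $\delta$ can be taken to be $\beta-\max_{\chi\ne\chi_0}\mathrm{Re}(\alpha_\chi)$, or any smaller fixed positive quantity, and it depends only on $\Gamma$ and the arithmetic of $L$, not on $r$ or $g$.) The analytic continuation of $B_\chi(s)=A_\chi(s)\zeta(s)^{-\alpha_\chi}$ past $\mathrm{Re}(s)=1$, together with the polynomial bound, is the content of the Landau-Selberg-Delange machinery invoked in Proposition~\ref{propsqfree}, and follows from the Chebotarev-type estimates for the relevant Artin $L$-functions (or, more elementarily, from the error term in the prime counting in frobenian sets).

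Applying Proposition~\ref{propsqfree} with $J=0$ to each $A_\chi$, the trivial character contributes the main term $\frac{B_{\chi_0}(1)}{\Gamma(\beta)}\,x/(\log x)^{1-\beta}$, and every nontrivial $\chi$ contributes $O\bigl(x/(\log x)^{1-\mathrm{Re}(\alpha_\chi)}\bigr)=O\bigl(x/(\log x)^{1-\beta+\delta}\bigr)$. Dividing by $|\Gamma|$ and summing over $\chi$ gives the stated formula, with ${\mathcal C}(\Uc,r)=\frac{1}{\Gamma(\beta)}\,B_{\chi_0}(1)\cdot|\Gamma|/|\Gamma|$; expanding $B_{\chi_0}(1)=\prod_p w_p$ from the Euler product ($w_p=(1+1/p)(1-1/p)^\beta$ for $p\in\Uc$, $p\nmid r$, accounting for the missing factor $(1-1/p)^\beta$ from $\zeta(s)^{-\beta}$; and $w_p=(1-1/p)^\beta$ otherwise) yields the claimed constant. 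The uniformity in $r$ and the shape of the $d(r)$ factor in the error term come from tracking how dropping the finitely many Euler factors at $p\mid r$ perturbs $B_\chi(1)$ and its bound: each removed factor changes things by $1+O(1/p)$, and the product over $p\mid r$ of such perturbations is $O(d(r))$ by a standard divisor-function estimate. The main obstacle, and the only genuinely non-formal point, is establishing the \emph{strict} inequality $\mathrm{Re}(\alpha_\chi)<\beta$ for nontrivial $\chi$ with a gap $\delta$ that is uniform in $r$ and $g$; this is where the hypothesis that $\tau_0(\Uc)$ generates $\Gamma$ is essential, and it ensures there is no conspiracy making $\chi\circ\tau_0$ behave like the trivial character on a density-one subset of $\Uc$.
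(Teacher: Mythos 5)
Your proposal is correct and follows essentially the same route as the paper's proof: detect $\tau(n)=g$ via orthogonality of characters of $\Gamma$, factor each Dirichlet series $\sum_{(n,r)=1}\chi(\tau(n))n^{-s}$ as $\zeta(s)^{\beta(\chi)}B_{\chi,r}(s)$ using the Chebotarev/Hecke $L$-function machinery implicit in the frobenian hypothesis, apply Proposition~\ref{propsqfree}, and observe that the trivial character gives the main term with $\beta(\chi_0)=\beta$ while the generation hypothesis forces $\mathrm{Re}\,\beta(\chi)\le\beta-\delta$ for nontrivial $\chi$. The paper writes the exponent explicitly as $\beta(\chi)=\sum_{g}\chi(g)\delta(\tau_0^{-1}(g))$, which is the same quantity you call $\alpha_\chi$, and justifies the gap exactly as you do (some $g$ in the image has $\chi(g)\ne 1$ and $\tau_0^{-1}(g)$ is a non-empty, hence positive-density, frobenian set).
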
 
 \begin{pf}  We use the orthogonality of the characters of the group $\Gamma$, which we 
 write  multiplicatively even though it is abelian.  Thus the 
quantity we want is 
$$ 
\frac{1}{|\Gamma|} \sum_{\chi \in {\widehat \Gamma}} \overline{\chi(g)} \sum_{\substack{{n\le x} \\ {(n,r)=1}}} \chi(\tau(n)),
$$ 
where we set  $\chi(\tau(n))=0$ if $n$ is divisible by some prime not in ${\mathcal U}$ or if $n$ is not square-free.    

We will use Proposition~\ref{propsqfree} to evaluate the sum over $n$ above.  Since the map $\tau$ is frobenian, by the 
usual proof of the Chebotarev density theorem (that is, by expressing frobenian sets in terms of Hecke $L$-functions, and using the zero-free region for Hecke $L$-functions) we may write 
$$ 
\sum_{\substack{ n=1\\ (n,r)=1}}^{\infty} \frac{\chi(\tau(n))}{n^s} = \zeta(s)^{\beta(\chi)} B_{\chi,r}(s), 
$$ 
where 
$$ 
\beta(\chi) = \sum_{g \in \Gamma} \chi(g) \delta(\tau_0^{-1}(g)), 
$$ 
and $B_{\chi,r}(s)$ extends analytically to the region Re$(s)>1-c/(\log (2+|t|))$ for some $1/10\ge c>0$, and in that region 
satisfies the bound $|B_{\chi,r}(s)| \le C d(r) (1+|t|)$ for some constant $C$.  The constants $c$ and $C$ depend only on ${\Uc}$ and ${\Gamma}$ but not on $r$.

First suppose that $\chi$ equals the trivial character $\chi_0$.  Note that $\beta(\chi)$ then equals $\beta$, and 
$$ 
B_{\chi_0,r}(s) = \prod_{\substack{{p\in {\mathcal U}} \\ {p\nmid r}}} \Big(1-\frac{1}{p^s}\Big)^{\beta}\Big(1+\frac 1{p^s}\Big) 
\prod_{\substack{{p \notin {\mathcal U}} \\ {\text{ or } p| r}}} \Big(1-\frac 1{p^s}\Big)^{\beta}. 
$$ 
Therefore, appealing to Proposition~\ref{propsqfree}, we obtain the main term of the theorem. 

Now suppose that $\chi$ is not the trivial character.  Then Re$(\beta(\chi)) \le \beta -\delta$ for some fixed $\delta>0$, since there is a  $g$ in the image of $\tau_0$
such that $\chi(g) \neq 1$ (since $\tau(\Uc)$ generates $\Gamma$), and the frobenian set $\tau_0^{-1}(g)$ is non-empty and hence of positive density $\delta(\tau_0^{-1}(g))$.
 Therefore, by Proposition~\ref{propsqfree}, we see that the contribution of  the non-trivial characters is 
$$ 
O\Big( \frac{x d(r)}{(\log x)^{1-\beta+\delta}}\Big), 
$$ 
completing the proof of the theorem.
\end{pf}

\subsection{A density result}

We keep the notations and hypotheses of the preceding section: $\Uc$ is a frobenian set with $\beta=\delta(\Uc)>0$, $\Gamma$ is a finite abelian group,  and $\tau_0: \Uc \rightarrow \Gamma$ is a frobenian map whose image generates $\Gamma$. In addition, let ${\mathcal M}$ be a multi-frobenian set of height $h\ge 0$, 
such that every element in ${\mathcal M}$ is coprime  to the primes in ${\mathcal U}$.  Let ${\mathcal S}$ be a given non-empty set of square-full numbers (we permit $1$ to be treated as a square-full number). 

Define $\Zc=\Zc(\Uc,\Mc,\Sc)$ to be the set of positive integers $n \geq 1$ that can be written as 
\begin{num} \label{Zcn}
$n = m m' m''$
with $m,m',m''$ positive integers such that 
\begin{sousnum}\label{Zcn1}
 $m$ is square-free and all its prime factors are in $\Uc$,
 \end{sousnum}
 \begin{sousnum}\label{Zcn2}
 $m' \in \Mc$,
 \end{sousnum}
 \begin{sousnum} \label{Zcn3}
 $m'' \in \Sc$ and $m''$ is relatively prime to $m m'$.
\end{sousnum} \end{num}
These conditions imply that $m$, $m'$ and $m''$ are pairwise relatively prime, and for $n \in \Zc$ such a decomposition $n=mm'm''$ is unique.
Extend $\tau$ to a map $ \Zc \rightarrow \Gamma$ by setting $\tau(n)=\tau(m)$ for $n$ as in \ref{Zcn}.
Let $\Delta$ be any non-empty subset of $\Gamma$.

\begin{theorem} \label{delangemethod} 
With notations as above, we have 
$$ 
\# \{ n\le x: \ n\in {\mathcal Z}, \ \tau(n) \in \Delta \}  \sim C  \delta({\mathcal M}) \frac{|\Delta|}{|\Gamma|} \frac{x}{(\log x)^{1-\beta} }(\log \log x)^{h},  
$$ 
where (with $C(\Uc,s)$ as in Theorem~\ref{thm5})

$$ 
C = \sum_{s\in {\mathcal S}} \frac{C({\mathcal U},s)}{s}.
$$ 
 \end{theorem}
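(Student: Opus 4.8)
The plan is to reduce Theorem~\ref{delangemethod} to Theorem~\ref{thm5} by decomposing each $n \in \Zc$ according to its unique factorization $n = m m' m''$ from \ref{Zcn}, summing over the square-full part $m''$ and the multi-frobenian part $m'$, and applying Theorem~\ref{thm5} to the square-free part $m$ with modulus $r = m' m''$. Concretely, I would write
$$
\#\{n \le x : n \in \Zc,\ \tau(n) \in \Delta\} = \sum_{m'' \in \Sc} \sum_{\substack{m' \in \Mc \\ (m',m'')=1}} \#\Big\{ m \le \tfrac{x}{m'm''} : m \text{ sq-free},\ p\mid m \Rightarrow p \in \Uc,\ \tau(m) \in \Delta,\ (m, m'm'') = 1 \Big\},
$$
valid because the three conditions \ref{Zcn1}--\ref{Zcn3} are independent once the coprimality is enforced, and $\tau(n) = \tau(m)$ by definition. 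Since $\Delta = \bigsqcup_{g \in \Delta} \{g\}$, the innermost count splits into $|\Delta|$ pieces, each of which is handled by Theorem~\ref{thm5} with $r = m'm''$.

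The main term of Theorem~\ref{thm5} applied with $r = m'm''$ and $y = x/(m'm'')$ is
$$
C(\Uc, m'm'') \frac{1}{|\Gamma|} \frac{y}{(\log y)^{1-\beta}},
$$
and one checks from the definition $C(\Uc,r) = \Gamma(\beta)^{-1}\prod_p w_p$ that $C(\Uc, m'm'')$ factors multiplicatively: since $m'$ is coprime to all primes in $\Uc$ (hypothesis on $\Mc$) and $m''$ is coprime to $m'$, we get $C(\Uc, m'm'') = C(\Uc, m'')\cdot (\text{a factor from the primes dividing } m')$; carrying this bookkeeping through and summing over $m'' \in \Sc$ produces exactly the constant $C = \sum_{s \in \Sc} C(\Uc,s)/s$ after dividing by $m''$. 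The sum over $m' \in \Mc$ with $m' \le$ (effectively) $\log x$-sized contributions is handled by Lemma~\ref{lemmamultifrob}: $\sum_{m' \in \Mc, m' \le x} 1/m' \sim \delta(\Mc)(\log\log x)^h$, which supplies the $\delta(\Mc)(\log\log x)^h$ factor. The factor $\log(x/(m'm'')) \sim \log x$ for $m'm'' \le x^{1/2}$, say, and the tail where $m'm''$ is large is absorbed into the error.

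The delicate point — and the step I expect to be the main obstacle — is controlling the error terms uniformly as $m'$ and $m''$ range over infinite sets. The error term in Theorem~\ref{thm5} is $O(x d(r) / (\log x)^{1-\beta+\delta})$ with $r = m'm''$; summing $d(m'm'')/(m'm'')$ over all $m' \in \Mc$ and $m'' \in \Sc$ converges (square-full numbers are sparse, and $\sum_{m' \le x} d(m')/m' \ll (\log x)^{O(1)}$ which is killed by the saved power $(\log x)^\delta$ only after we also truncate $m'm'' \le x^{\eta}$). So one must split the ranges: for $m'm'' \le \exp((\log x)^{1/2})$ (or some such threshold) apply Theorem~\ref{thm5} and sum the errors; for larger $m'm''$ use the trivial bound $\#\{\cdots\} \le x/(m'm'')$ together with the convergence of $\sum 1/m''$ over square-full $m''$ and the Mertens estimate of Lemma~\ref{lemmamultifrob} (noting $\sum_{m' \in \Mc, m' > y} 1/m' \to 0$ as a tail, which needs the Remark's estimate or a dyadic argument) to show this tail is $o\big(x (\log\log x)^h/(\log x)^{1-\beta}\big)$. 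Assembling these pieces — main terms converging to the claimed constant, error terms and tails negligible — yields the asymptotic. I would also remark that the hypothesis $\tau_0(\Uc)$ generates $\Gamma$ is what makes $C(\Uc,s) > 0$ and hence $C > 0$, and that $\Sc \ne \emptyset$ (with $1$ allowed) guarantees $C > 0$ as well, so the stated asymptotic is genuine and not vacuous.
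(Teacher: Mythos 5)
Your overall decomposition is the same as the paper's: factor $n = m m' m''$ uniquely, apply Theorem~\ref{thm5} to the square-free part $m$, and sum via Lemma~\ref{lemmamultifrob}. The bookkeeping of the constant is also essentially right: since every prime dividing $m'$ lies outside $\Uc$, one in fact has $C(\Uc, m'm'') = C(\Uc, m'')$ exactly (no extra factor from $m'$ at all), and the paper correspondingly applies Theorem~\ref{thm5} with $r=m''$ only, which is cleaner. So far so good.

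However, your tail analysis has a genuine gap, and it is precisely at the spot you flagged as delicate. You claim $\sum_{m'\in\Mc,\, m'>y} 1/m' \to 0$ as a tail. This is false: $\sum_{m'\in\Mc,\, m'\le x} 1/m' \sim \delta(\Mc)(\log\log x)^h$ \emph{diverges}, and the tail beyond $y$ is of order $(\log\log x)^h - (\log\log y)^h$, which is not small unless $\log\log y$ is extremely close to $\log\log x$. Concretely, with your suggested threshold $T=\exp((\log x)^{1/2})$, the range $m'' \le \sqrt T$, $m' > \sqrt T$ contributes (via the trivial bound $x/(m'm'')$) on the order of $x\big[(\log\log x)^h - (\tfrac12\log\log x)^h\big] \asymp x(\log\log x)^h$, which completely dwarfs the main term $x(\log\log x)^h/(\log x)^{1-\beta}$. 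Even pushing the threshold up to $z=x^{1/\log\log x}$, as the paper does, the trivial bound on the number of $m$ still gives roughly $x(\log\log x)^{h-1}\log\log\log x$, which is again larger than the main term by a full power $(\log x)^{1-\beta}$. The trivial bound simply cannot work here: you must retain the constraint that $m$ is composed only of primes in $\Uc$, i.e.\ apply Proposition~\ref{propsqfree} to the inner $m$-sum even in the tail range, to recover the crucial $(\log x)^{-(1-\beta)}$ saving. The paper's case (ii) does exactly this (and further splits into $m'\le\sqrt x$ vs.\ $m\le\sqrt x$, using the bound for integers with exactly $h$ prime factors in the second sub-case). Without that ingredient your error estimate does not close.
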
  
\begin{pf} Put $R= (\log x)^2$, and $z= x^{1/\log \log x}$.   We want to count $n= m m^{\prime} m^{\prime \prime}$ 
with $m^{\prime \prime} \in {\mathcal S}$, $m^{\prime} \in {\mathcal M}$ with $(m^{\prime},m^{\prime\prime})=1$, and 
$m$ composed of primes in ${\mathcal U}$ with $(m,m^{\prime\prime})=1$ and $\tau(m)=g$.   We now group these 
terms according to whether (i) $m^{\prime \prime} \le R$ and $m^{\prime} \le z$, or (ii) $m^{\prime \prime} \le R$ but $m^{\prime}>z$, or 
(iii) $m^{\prime \prime} >R$.  We shall show that the first case gives the main term in the asymptotics, and the other two cases 
are negligible.  

First consider case (i).   This case contributes 
$$ 
\sum_{\substack{{m^{\prime \prime} \in {\mathcal S}} \\ {m^{\prime \prime} \le R} } }  
\sum_{\substack{{m^{\prime} \in {\mathcal M} } \\ {(m^{\prime},m^{\prime \prime})=1} \\ 
{m^{\prime} \le z}} } \sum_{g\in \Delta} \Big| \Big\{ m\le \frac{x}{m^{\prime} m^{\prime\prime}}: \ \tau(m) =g, \ (m,m^{\prime\prime})=1 \Big\}\Big|.
$$ 
Now we use Theorem \ref{thm5}, so that the above equals 
$$ 
\sum_{\substack {{m^{\prime \prime} \in {\mathcal S}} \\ {m^{\prime \prime} \le R} } }\sum_{\substack{{m^{\prime } \in {\mathcal M} } \\ {(m^{\prime},m^{\prime \prime})=1} \\ 
{m^{\prime} \le z}} } \Big( C({\mathcal U}, m^{\prime\prime}) \frac{|\Delta|}{|\Gamma| } \frac{x}{m^{\prime}m^{\prime \prime} (\log (x/m^{\prime} m^{\prime\prime}))^{1-\beta}} + O\Big( \frac{x d(m^{\prime\prime})}{m^{\prime}m^{\prime\prime}(\log x)^{1-\beta+\delta}}\Big)\Big). 
$$ 
Using Lemma~\ref{lemmamultifrob}, and since $\sum_{m^{\prime \prime} \in {\mathcal S} } d(m^{\prime\prime})/m^{\prime \prime}$ converges, we see that the error term above is $O(x/(\log x)^{1-\beta +\delta -\epsilon})$, which is negligible.  
Since $\log (x/m^{\prime} m^{\prime \prime}) \sim \log x$, the main term above is (again using Lemma~\ref{lemmamultifrob}) 
$$ 
\sim\frac{|\Delta|}{|\Gamma|} \frac{x}{(\log x)^{1-\beta}} \Big( \delta({\mathcal M}) (\log \log x)^h\Big) \sum_{\substack{ 
{m^{\prime \prime } \in {\mathcal S}} \\ {m^{\prime \prime} \le R} } } \frac{C({\mathcal U},m^{\prime \prime})}{m^{\prime \prime}},
$$
which equals the main term of the theorem.  
 
Now consider case (ii).  Since all the terms involved are positive, we see that these terms contribute (with $\omega(u)$ denoting the number of distinct prime factors of $u$)
\begin{equation}
\label{eqn2.1}
\ll \sum_{\substack{ {m^{\prime \prime} \in {\mathcal S}} \\ {m^{\prime\prime}\le R}} }\sum_{\substack{ 
{z\le u \le x/m^{\prime\prime}} \\ {\omega(u)=h}}} \sum_{\substack{ {m\le x/(um^{\prime \prime}) }\\ 
{p|m \implies p\in {\mathcal U}}} } 1. 
\end{equation} 
Now in the sums above either $u\le \sqrt{x}$, or $m\le \sqrt{x}$.  In the first case, note that the largest prime factor of $u$ lies in $[z^{1/h},\sqrt{x}]$ and 
the others are all below $\sqrt{x}$.  Moreover, using Proposition \ref{propsqfree}, the inner sum over $m$ in \eqref{eqn2.1} is 
$\ll x/(um^{\prime \prime} (\log x)^{1-\beta})$.   Thus we see that the first case contribution to \eqref{eqn2.1} is bounded by  
\begin{align*}
&\ll \sum_{\substack{{m^{\prime\prime} \in {\mathcal S}} \\ {m^{\prime \prime} \le R}}} 
\sum_{\substack{z< u \le \sqrt{x} \\ \omega(u) = h}} \frac{x}{u m^{\prime\prime} (\log x)^{1-\beta}} 
\ll \frac{x}{(\log x)^{1-\beta}} \Big(\sum_{z^{1/h} \le p \le \sqrt{x}} \frac{1}{p} \Big) \Big(\sum_{p\le \sqrt{x}} \frac 1p \Big)^{h-1} \\
&\ll \frac{x}{(\log x)^{1-\beta}} (\log \log x)^{h-1} \log \log \log x.
\end{align*}
For the second case, note that for $m\le \sqrt{x}$ (and $m^{\prime\prime} \le R=(\log x)^2$) we have (by standard estimates for 
the number of integers with $h$ distinct prime factors)
$$ 
\sum_{\substack{ u\le x/(mm^{''}) \\ \omega(u)=h}} 1 \ll \frac{x}{mm^{''} } \frac{(\log \log x)^{h-1}}{\log x}, 
$$ 
and so we obtain that the second case contribution to \eqref{eqn2.1} is bounded by 
\begin{align*}
&\ll \frac{x}{\log x} (\log \log x)^{h-1} \sum_{\substack { m\le \sqrt{x} \\ m\in {\mathcal U}} } \frac{1}{m} \ll 
\frac{x}{\log x} (\log \log x)^{h-1} \prod_{\substack{ p\le \sqrt{x}\\  p\in {\mathcal U}} } \Big( 1+\frac{1}{p} \Big) 
\\
&\ll \frac{x}{(\log x)^{1-\beta}} (\log \log x)^{h-1}.  
\end{align*}  
Putting both cases together, we conclude that the contribution of the terms in case (ii) 
is 
$$ 
\ll \frac{x}{(\log x)^{1-\beta}} (\log \log x)^{h-1} \log \log \log x, 
$$ 
which is small compared to the contribution from case (i).  

Finally, since the number of $m m^{\prime} \le x/m^{\prime\prime}$ is trivially at most $x/m^{\prime \prime}$, the contribution in case (iii) is 
$$
\ll \sum_{\substack{{m^{\prime\prime } \in {\mathcal S}} \\ {m^{\prime \prime}>R}} }\frac{x}{m^{\prime \prime}} \ll 
\frac{x}{\sqrt{R}} = \frac{x}{\log x}, 
$$ 
which is negligible.  This completes our proof.  
\end{pf}

\section{Modular forms modulo $p$}

\label{heckealgebra}
\subsection{The algebra of modular forms $M(N,\F)$} 

As in the introduction, we fix an odd prime $p$ and a level $N \geq 1$.   Let $k\ge 0$ be an integer.  The space $M_k(N,{\Z})$ 
denotes the space of all holomorphic modular forms of weight $k$, level $\Gamma_1(N)$, and with $q$-expansion at infinity in ${\Z}[[q]]$.  For any commutative ring $A$ we define 
$$
M_k (N,A) = M_k(N,\Z) \otimes A.
$$
 The natural $q$-expansion map $M_k(N,A) \rightarrow A[[q]]$ is injective for any ring $A$ (this is the $q$-expansion principle, cf. \cite[Theorem 12.3.4]{DI}), and so we may view below $M_k(N,A)$ as a subspace of $A[[q]]$.  Finally we define  
 $$
 M(N,A) = \sum_{k=0}^\infty M_k(N,A) \subset A[[q]].
 $$

Note that if $A$ is a subring of $\C$, then $M(N,A)$ is the {\sl direct} sum of the spaces $M_k(N,A)$ (see \cite[Lemma 2.1.1]{miyake}).  However the situation is 
different for general rings $A$, and in particular when $A$ is a finite field.  For instance, the constant modular form $1$
of weight $0$ in $M_0(N,\F_p)$ and the Eisenstein series $E_{p-1}$ in $M_{p-1}(N,\F_p)$ both have the same $q$-expansion $1$, showing that the subspaces $M_0(N,\F_p)$ and $M_{p-1}(N,\F_p)$ are not in direct sum in $\F_p[[q]]$. For the same reason it is not true that $M(N,A) \otimes_A A' = M(N,A')$ in general (though this is  true if $A'$ is flat over $A$); rather $M(N,A')$ is the image of $M(N,A) \otimes_A A'$ in $A'[[q]]$.

\subsection{Hecke operators on $M_k(N,A)$} 
\label{subHecke}

For any $k \geq 0$, the space of modular forms $M_k(N,\C)$ is endowed with the action of the Hecke operators $T_n$ for positive integers $n$.   If $n$ is a positive integer coprime to $N$, define the operator $S_n$ as  $n^{k-2}\langle n \rangle$, where $\langle n \rangle$ is the diamond operator.  
Recall that these operators satisfy the following properties. 

\begin{num} \label{heckecommute} All the operators $T_n$ and $S_m$ commute. \end{num}
\begin{num} \label{Snmult} We have $S_1=1$ and $S_{mn}=S_m S_{n}$ for all $m, n$ coprime to $N$.\end{num}
\begin{num} \label{Tnmult} The Hecke relations $T_1=1$, $T_{mn}=T_mT_n$ if $(m,n)=1$, hold.  If $\ell \nmid N$ is a prime, then  $T_{\ell^{n+1}} = T_{\ell^n} T_{\ell} - \ell S_{\ell} T_{\ell^{n-1}}$.  If $\ell |N$ is prime then  $T_{\ell^n}=(T_\ell)^n$.
\end{num}
As is customary, we shall also use below the notation $U_{\ell}$ for the operators $T_{\ell}$ when $\ell \mid N$. 
From the above relations one sees that the operators $T_\ell$ and $S_\ell$ for $\ell$ prime determine all the others. 
Recall that the action of the Hecke operators on $q$-expansions is given as follows. 

\begin{num} \label{qexp1} If $\ell |N$ then $a_n(U_\ell f) = a_{\ell n} (f)$. \end{num}
\begin{num} \label{qexp2} If $\ell \nmid N$ is prime, then $a_n(T_\ell f) = a_{\ell n}(f) + \ell a_{n/\ell}(S_\ell f)$, with the 
understanding that $a_{n/\ell}$ means $0$ if $\ell \nmid n$.\end{num}
\noindent It follows that:
\begin{num} \label{qexp3} If $(n,m)=1$ then $a_n(T_m f) = a_{nm}(f)$. In particular $a_1(T_m f)=a_m(f)$ for every $m \geq 1$.
\end{num}

Lastly, we recall the following important fact, which follows from the geometric interpretation due to Katz (\cite{katz}) of the elements of $M_k(N,A)$ as the sections of a coherent sheaf on the
modular curve $Y_1(N)_{/A}$ over $A$, and of the Hecke operators as correspondences on $Y_1(N)$. A convenient reference is \cite[Chapter 12]{DI}.

\begin{num} \label{heckeonMk} Let $A$ be a subring of $\C$.   All the operators $T_n$ and $S_n$ leave stable the subspace $M_k(N,A)$ of $M_k(N,\C)$.
\end{num}
 
This fact allows us to define unambiguously the operators $T_n$ and $S_n$ over $M_k(N,A)=M_k(N,\Z) \otimes_\Z A$ by extending the scalars from $\Z$ to $A$ for the linear operators $T_n$ and $S_n$ on $M_k(N,\Z)$.

 \subsection{Hecke operators on $M(N,\F)$}

\label{secHeckeonM} 
From now on, $\F$ is a finite field of characteristic $p$.  First we recall a result due to Serre and Katz, which allows us to assume that the level $N$ is prime to $p$;  for a proof, see \cite[pages 21-22]{gouvea}.

\begin{num} \label{levelred} Let $\F$ be a finite field of characteristic $p$. Write $N=N_0p^\nu$ with $(N_0,p)=1$. Then as subspaces of $\F[[q]]$ one has $M(N,\F_p)=M(N_0,\F_p)$. \end{num}

Henceforth, we assume that $(N,p)=1$.  

\begin{num}  \label{heckeonM} There are unique operators $T_n$ (for any $n \geq 1$) and $S_n$ (for $n \geq 1$ with $(n,N)=1$) on $M(N,\F)$ such that, for any $k \geq 0$, the inclusion $M_k(N,\F) \hookrightarrow M(N,\F)$ is compatible with the operators $T_n$ and $S_n$ defined on the source and target.
\end{num}

Since the sum of the $M_k(N,A)$ for $k=0,1,2,\dots$ is $M(N,A)$ by definition, the uniqueness claimed in \ref{heckeonM} follows. The existence relies on the interpretation of the 
elements of $M(N,A)$ as algebraic functions on the open Igusa curve (an étale cover of degree $p-1$ of the ordinary locus of $Y_1(N)_{/\F_p}$) which is due to Katz (see \cite{katz}, \cite[Theorem 2.2]{katzCD}) based on earlier work of Igusa. 
For a more recent reference for \ref{heckeonM}, see \cite[Propositions 5.5 and 5.9]{gross}.
  
It is clear that the operators $T_n$ and $S_n$ still satisfy properties \ref{heckecommute} to \ref{qexp3}.
We record one more easy consequence of \ref{heckeonM}.

\begin{num} \label{locfin} The actions of the Hecke operators $T_n$ and $S_n$ on $M(N,\F)$ are locally finite. That is, 
any form $f \in M(N,\F)$ is contained in a finite-dimensional subspace of $M(N,\F)$ stable under all these operators.
\end{num}

We shall use the notation $U_p$ instead of $T_p$ when acting on the space $M(N,\F)$. More generally, if $m$ is an integer all of whose prime factors divide $Np$ we shall use the notation $U_m$ instead of $T_m$.

Finally, we note that the space $M(N,\F)$ enjoys an additional Hecke operator, see \cite[\S1]{jochnowitz2}.

\begin{num} \label{Vp} The subspace $M(N,\F)$ of $\F[[q]]$ is stable under the operator $V_p$, defined by $V_p(\sum a_n q^n) = \sum a_n q^{pn}$.
\end{num}

\subsection{The subspace $\Fc(N,\F)$ of $M(N,\F)$}

Using the same notation as in \cite{NS}, \cite{NS2}, let us define $\Fc(N,\F)$ as the subspace $\cap_{\ell \mid Np} \ker U_\ell$ of $M(N,\F)$.
In other words \begin{num} \label{defMp}$ \Fc(N,\F)=\{f = \sum_{n=0}^\infty a_n q^n \in M(N,\F),\ a_n \neq 0 \Rightarrow (n,Np)=1 \}$.\end{num}
Since the Hecke operators commute, the operators $T_\ell$ and $S_\ell$ for $\ell \nmid Np$ stabilize $\Fc(N,\F)$.

\subsection{The residual Galois representations $\rhob$ and the invariant $\alpha(\rhob)$}
\label{residual}
We denote by $G_{\Q,Np}$ the Galois group of the maximal algebraic extension of $\Q$ unramified outside $Np$. We denote by $c$ a complex conjugation in $G_{\Q,Np}$.
If $\ell$ is a prime not dividing $Np$, we denote by $\Frob_{\ell}$ an element of Frobenius associated to $\ell$ in $G_{\Q,Np}$. We fix an algebraic closure $\bar \F_p$ of $\F_p$.

We shall denote by $R=R(N,p)$ the set of equivalence classes of continuous odd\footnote{That is, such that $\tr \rhob(c)=0$.} semi-simple two-dimensional representations $\rhob$ of the Galois group $G_{\Q,Np}$ over $\bar \F_p$ that are attached to eigenforms in $M(N,\bar \F_p)$. Here we say that $\rhob$ is 
{\it attached to a an eigenform} in $M(N,\bar \F_p)$ if there exists a non-zero eigenform $f \in M(N,\bar \F_p)$ for the Hecke operators $T_\ell$ and $S_\ell$ for $\ell \nmid Np$, with eigenvalues $\lambda_\ell$ and $\sigma_\ell$, such that 
\begin{num} \label{es} the characteristic polynomial of $\rhob(\Frob_\ell)$ is $X^2-\lambda_\ell X + \ell \sigma_\ell$. \end{num} 
Although we do not need this fact, we remark that Khare and Wintenberger have shown Serre's conjecture that every odd semi-simple two-dimensional representation of Serre's conductor $N$ is attached to an eigenform in $M(N,\bar \F_p)$.

A result of  Atkin, Serre and Tate in the case $N=1$ (\cite{SerreB}), and of Jochnowitz in the general case (\cite[Theorem 2.2]{jochnowitz2}) states that the number of systems of eigenvalues for the $T_\ell$ and $S_\ell$ appearing in $M(N,\bar \F_p)$ is finite. Hence $R(N,p)$ is a finite set. If $\rhob: G_{\Q,Np} \rightarrow \Gl_2(\bar \F_p)$ is a representation, it is defined over some finite extension $\F$ of $\F_p$ inside $\bar \F_p$ (for absolutely irreducible $\rhob$, this amounts to saying that $\tr \rhob(G_{\Q,Np}) \subset \F$, since finite fields have trivial Brauer groups). Therefore, there exists a finite extension $\F$ of $\F_p$ such that all representations in $R(N,p)$ are defined over $\F$. 
\par \bigskip
For $\rhob \in R(N,p)$, we shall denote by $U_\rhob$ the open and closed subset of $G_{\Q,Np}$ of elements $g$ such that $\tr \rhob(g) \neq 0$,
and by $N_\rhob$ its complement, the set of elements $g$ such that $\tr \rhob(g)=0$. We set $\alpha(\rhob) = \mu_{G_{\Q,Np}}(N_\rhob)$, where $\mu_{G_{\Q,Np}}$ is the Haar measure on the compact group $G_{\Q,Np}$. 


\begin{prop} \label{alpharho} For all representations $\rhob$ we have $\alpha(\rhob) \in \Q$ with $0<\alpha(\rhob) \le 3/4$.  If $\rhob$ is reducible, we have $\alpha(\rhob) \leq 1/2$.
\end{prop}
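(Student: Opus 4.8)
The plan is to compute $\alpha(\rhob) = \mu_{G_{\Q,Np}}(N_\rhob)$ where $N_\rhob = \{ g : \tr \rhob(g) = 0\}$, by reducing to a computation inside the image $G = \rhob(G_{\Q,Np}) \subset \Gl_2(\bar\F_p)$. Since $\rhob$ is continuous with finite image, $\mu_{G_{\Q,Np}}(N_\rhob)$ equals the proportion $\# \{ g \in G : \tr(g) = 0\} / \# G$. So everything reduces to a finite group-theoretic question: given a finite subgroup $G$ of $\Gl_2(\bar\F_p)$ (with $p$ odd) that arises as the image of an odd semisimple representation, bound the density of trace-zero elements, which is automatically in $\Q$. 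The two cases to organize are $\rhob$ reducible (so $G$ is contained, up to conjugation, in the group of diagonal matrices, or rather $\rhob = \chi_1 \oplus \chi_2$) and $\rhob$ irreducible.

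For the reducible case, write $\rhob \simeq \chi_1 \oplus \chi_2$ with $\chi_i : G_{\Q,Np} \to \bar\F_p^\ast$ characters. Then $\tr\rhob(g) = \chi_1(g) + \chi_2(g)$, which vanishes iff $\chi_2(g) = -\chi_1(g)$, i.e. iff $(\chi_1/\chi_2)(g) = -1$. If $\chi_1 = \chi_2$ this never happens (as $p$ is odd), giving $\alpha(\rhob)=0$ — but wait, oddness forces $\tr\rhob(c)=0$, so $\chi_1(c) = -\chi_2(c)$ and in particular $\chi_1 \ne \chi_2$; hence $\chi_1/\chi_2$ is a nontrivial character of the finite cyclic quotient it cuts out, and the fiber over $-1$ has density either $0$ (if $-1$ is not in the image of $\chi_1/\chi_2$) or exactly $1/\#(\mathrm{im}(\chi_1/\chi_2))$. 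Oddness guarantees $-1$ is in the image (take $g=c$), so the density is $1/(2m)$ for $m = \#\mathrm{im}(\chi_1/\chi_2)/\gcd$... more simply: $\chi_1/\chi_2$ has order $2n$ for some $n\ge 1$ (even, since $-1$ is in the image), and the density of $N_\rhob$ is $1/(2n) \le 1/2$. This gives $\alpha(\rhob) \le 1/2$ in the reducible case, with equality iff $\chi_1/\chi_2$ is quadratic.

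For the irreducible case one needs the classification of finite irreducible subgroups $G$ of $\Gl_2(\bar\F_p)$ up to the scalars: the projective image $\bar G \subset \PGL_2(\bar\F_p)$ is either cyclic or dihedral, or one of the exceptional groups $A_4, S_4, A_5$, or contains a conjugate of $\PSL_2(\F_{p^r})$ or $\PGL_2(\F_{p^r})$. In each case one must bound the fraction of trace-zero elements. The key uniform device: for $g \in \Gl_2$, $\tr(g) = 0$ iff $g^2 = -\det(g)\,I$, i.e. the image of $g$ in $\PGL_2$ is an involution (order dividing $2$). So $N_\rhob$ is the preimage in $G$ of the set of elements of order $\le 2$ in $\bar G$, and $\alpha(\rhob)$ is the density of order-$\le 2$ elements of $\bar G$ weighted by the fibers of $G \to \bar G$ — but those fibers all have the same size $\#(\text{scalars in }G)$, so $\alpha(\rhob)$ equals the proportion of elements of order $\le 2$ in $\bar G$, minus possibly the identity contribution (the identity of $\bar G$ lifts to scalars, which have nonzero trace since $p$ is odd), so really $\alpha(\rhob) = \#\{ \bar g \in \bar G : \bar g^2 = 1, \bar g \ne 1\}/\#\bar G$. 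Now one checks: in $A_4$ the involutions number $3$ out of $12$, ratio $1/4$; in $S_4$, $6+3=9$... one must be careful which count — involutions in $S_4$ are the six transpositions and three double-transpositions, $9/24 = 3/8$; in $A_5$, $15/60 = 1/4$; dihedral $D_{2m}$ has $m$ or $m+1$ involutions out of $2m$, ratio close to $1/2$ but $\le 1/2$ after removing identity... one checks it never exceeds $1/2$; and for $\PSL_2(\F_q)$ or $\PGL_2(\F_q)$ the number of involutions is $O(q^2)$ out of $\sim q^3$, so the ratio is $O(1/q) \to 0$. The maximum over all cases is $3/4$, achieved — I expect — by the dihedral case with the right scalar structure pushing the ratio up (the subtlety is that while the $\PGL_2$ involution count gives at most $\sim 1/2$, the preimage in $\Gl_2$ can have a larger trace-zero fraction when the center is small; the honest bound comes from counting in $\Gl_2$ directly, where $\tr g = 0$ allows both eigenvalues $\pm\lambda$, and in a dihedral-type group containing many such elements one can reach $3/4$).

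The main obstacle is getting the constant $3/4$ exactly right and identifying precisely which $\rhob$ attains it — this requires going carefully through the $\Gl_2$ (not just $\PGL_2$) conjugacy classes with trace zero in each group of the classification, keeping track of how the scalars sit inside $G$, rather than hand-waving via the projective image. Everything else (rationality, continuity reduction to finite image, the reducible case) is routine; the real work is the case-by-case verification of the sharp upper bounds $3/4$ (irreducible) and $1/2$ (reducible), for which I would lean on the standard classification of finite subgroups of $\Gl_2(\bar\F_p)$ together with Dickson's description of subgroups of $\PSL_2$.
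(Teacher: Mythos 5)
Your reduction is the same as the paper's: pass to the finite image $G=\rhob(G_{\Q,Np})$, so $\alpha(\rhob)$ is the rational fraction of trace-zero elements of $G$; observe that an element of $\Gl_2(\bar\F_p)$ has trace zero iff its image in $\PGL_2$ has order exactly $2$ (scalars have nonzero trace since $p$ is odd); conclude, since the fibers of $G \to G'$ all have size $|G\cap\text{scalars}|$, that $\alpha(\rhob)$ equals the proportion of involutions in the projective image $G'$; then run through Dickson's classification. Your reducible case (via the character $\chi_1/\chi_2$) is a correct, minor variant of the paper's.

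The genuine error is in your dihedral count, and in the false ``subtlety'' you invent to patch it. You assert that for dihedral $D_{2m}$ the involution ratio is ``$\le 1/2$ after removing identity.'' But the identity has order $1$, not $2$, so nothing is removed; and when $m$ is even the count is $(m+1)/(2m) > 1/2$. For $m=2$, $D_4$ is the Klein four-group with $3$ involutions out of $4$, giving exactly $3/4$ — this is the extremal case, and it is already visible at the $\PGL_2$ level. Your subsequent remark that ``the preimage in $\Gl_2$ can have a larger trace-zero fraction when the center is small'' flatly contradicts what you proved two sentences earlier (that the fiber sizes cancel and the ratio in $G$ equals the involution proportion in $G'$). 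There is no such subtlety, and no need to ``go carefully through the $\Gl_2$ conjugacy classes'' as you suggest: the projective reduction is correct and sufficient. What you actually need is to fix the dihedral count as above and to finish verifying the remaining cases $A_4$, $S_4$, $A_5$, $\PSL_2(\F_q)$, $\PGL_2(\F_q)$ — your stated ratios $1/4$, $3/8$, $1/4$, $O(1/q)$ there are correct — confirming that $3/4$ is the maximum.
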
 
\begin{pf} By definition, $\alpha(\rhob)$ is the proportion of elements of trace zero in the finite subgroup $G = \rhob(G_{\Q,Np})$ of $\Gl_2(\bar \F_p)$.  Thus $\alpha(\rhob)$ is rational and is at most one.  Since $\rhob(c)$ has trace zero, we have $\alpha(\rhob) >0$.  It remains now to obtain the upper bounds claimed for $\alpha(\rhob)$.  
Let $G'$ be the image of $G$ in $\PGL_2(\bar \F_p)$. Then $\alpha(\rhob)$ is also the proportion of elements of trace zero in $G'$ (it makes sense
to say that an element  of $\PGL_2(\bar \F_p)$ has ``trace zero", even though the trace of such an element is of course not well-defined). 
Also, observe that an element $g'$ in $\PGL_2(\bar \F_p)$ has trace $0$ if and only if it has order exactly $2$. Indeed, let $g$ be a lift of $g'$ in $\Gl_2(\bar \F_p)$. If $g$ is diagonalizable, and $x,y$ are its eigenvalues, then $g'$ has order exactly $2$ means that $x \neq y$, but $x^2 = y^2$; thus $x= -y$, and $\tr g=0$.  If $g$ is not diagonalizable, then the order of $g'$ is a power of $p$, hence not $2$, and it has a double eigenvalue $x \neq 0$ so its trace $2x$ is not $0$. 
 Hence $\alpha(\rhob)$ is also the proportion of elements of order $2$ in $G'$.

If $\rhob$ is reducible, then, since $\rhob$ is assumed semi-simple, $G$ is conjugate to a subgroup of the diagonal subgroup $D=\bar \F_p^\ast \times \bar \F_p^\ast$, and $G'$ may thus be assumed to be a subgroup of the image $D'$ of $D$ in $\PGL_2$. The group $D'$ is isomorphic to $\bar \F_p^\ast$, by the isomorphism sending $x \in \bar \F_p^\ast$ to the image of $\mat{1 & 0 \\ 0 & x}$ in $\PGL_2(\F_p)$, and via this identification, the only element of trace zero of $D'$ is $-1$, which is always in $G'$ because $G$ contains $\rhob(c)$. One thus  has $\alpha(\rhob) = 1 / |G'|$.
Therefore $\alpha(\rhob) \leq 1/2$ since $G'$ is not the trivial group because $\rhob(c)$ is not trivial in $\PGL_2(\bar \F_p)$.

Now assume that $\rhob$ is irreducible. We shall use the classification of subgroups of $\PGL_2(\bar \F_p)$ for which a convenient modern reference is \cite{faber}. According to Theorems B and C of \cite{faber}, if $G'$ is any finite subgroup of $\PGL_2(\bar \F_p)$,
 we are in one of 9 situations described there, and labeled B(1) to B(4) and C(1) to C(5). The case B(3) does not arise since we assume $p>2$, and neither do cases B(2) and C(1) which contradict the assumed irreducibility of 
 $\rhob$ (for B(2) because $G'$ cyclic implies $G$ abelian, and for C(1)  by Remark 2.1 of \cite{faber}).
  In the other situations, we argue as follows.
  
\begin{itemize}
\item[C(2)] $G'$ is isomorphic to a dihedral group $D_{2n}$ of order $2n$ for $n \geq 2$ an integer, which is a semi-direct product of a cyclic group $C_n$ by a subgroup of order $2$.
In this case, the elements of order $2$ are the elements not in $C_n$ and, if $n$ is even, the unique element of order $2$ in $C_n$. Thus
$$\alpha(\rhob)=\begin{cases} \frac{1}{2} & \text{ if  $n$ is odd} \\
\frac{1}{2}+\frac{1}{2n} & \text{ if  $n$ is even} \end{cases}
$$
Note that if $n=2$, $\alpha(\rhob)=3/4$, and in all other cases $\alpha(\rhob) \leq 5/8$.
 
\item[C(3)]$ G' \simeq A_4$, so 
$\alpha(\rhob)=\frac{1}{4}$ since $A_4$ has order 12, and has 3 elements of order 2.

\item[C(4)]   $G'   \simeq S_4$, so  $\alpha(\rhob)=\frac{3}{8}$ since $S_4$ has order 24 and 9 elements of order $2$ (6 transpositions and 3 products of two disjoint transpositions).

\item[C(5), B(4)] 
$G' \simeq A_5$, so $\alpha(\rhob) =  \frac{1}{4}$ since $A_5$ has order 60 and has 15 elements of order $2$ (the products of two disjoint transpositions).  

\item[B(1)]  The subgroup $G'$ of $\PGL_2(\bar \F_p)$ is conjugate to $\PGL_2(\F_q)$, where $q$ is some power of $p$. In this case, the number of matrices
 of trace $0$ in $G'$ is $q^2$, while $|G'|=q(q-1)(q+1)$, so $$\alpha(\rhob)=\frac{q}{(q-1)(q+1)}.$$
Thus in this case, we have $\alpha(\rhob) \leq 3/8$, and this bound is attained for $q=3$.

\item[B(1) again] The subgroup $G'$ of $\PGL_2(\bar \F_p)$ is conjugate to $\PSL_2(\F_q)$.
The number of matrices of trace $0$ in $\SL_2(\F_q)$ is $q^2-q$ if $-1$ is not a square in $\F_q$, and $q^2+q$ if $-1$ is a square. Since $|\SL_2(\F_q)|=q(q-1)(q+1)$ one has 
$$\alpha(\rhob)=\begin{cases} \frac{1}{q+1} & \text{ if $-1$ is not a square in $\F_q$} \\
\frac{1}{q-1} &  \text{ if $-1$ is a square in $\F_q$} \end{cases}
$$ Thus in this case, we have $\alpha(\rhob) \leq 1/4$, and this value is attained for $q=3$ and $q=5$.
\end{itemize}
 \end{pf}

\subsection{The Hecke algebra $A$} 

From now on, we assume that $\F$ is a finite field contained in $\bar \F_p$ and  large enough to contain the fields of definition of all the representations $\rhob \in R(N,p)$.

Let $A=A(N,\F)$ be the closed sub-algebra of $\End_\F(M(N,\F))$ generated by the Hecke operators $T_\ell$ and $S_\ell$ for $\ell$ prime not dividing $Np$.  Equivalently, by \ref{Tnmult}, $A$ is the closed sub-algebra of $\End_\F(M(N,\F))$ generated by the $T_m$ for all $m$ relatively prime to $Np$.  
Here we give $M(N,\F)$ its discrete topology and $\End_\F(M(N,\F))$ its compact-open topology.
Then $M=M(N,\F)$ and $\Fc=\Fc(N,\F)$ are topological $A$-modules. Note that if $f \in M$ (or if $f \in \Fc$) the sub-module $Af$ of $M$ (respectively of $\Fc$) generated by  $f$ is finite-dimensional over $\F$ by \ref{locfin}, and hence is finite as a set.

By construction, the maximal ideals of $A(N,\F)$ correspond to the $\Gal(\bar \F_p/\F)$-conjugacy classes of systems of eigenvalues 
(for the $T_\ell$ and $S_\ell$, $\ell \nmid Np$) appearing in $M(N,\bar \F_p)$.
As recalled earlier, the set of such systems is finite, and  in natural bijection (determined by the Eichler-Shimura relation~\ref{es}) with the set $R(N,p)$.  Further, by our choice of $\F$, all those eigenvalues are in $\F$. It follows that $A$ is a semi-local ring; more precisely that we have a natural decomposition
$$ A = \prod_{\rhob \in R(N,p)} A_\rhob$$
where $A_\rhob$ is the localization of $A$ at the maximal ideal corresponding to the system of eigenvalues corresponding to $\rhob$. The quotient $A_\rhob$ of $A$ is a complete local $\F$-algebra of residue field $\F$, and if one denotes by $T_\rhob$ the image of an element $T \in A$ in  $A_\rhob$, then $A_\rhob$ is characterized among the local components of $A$ by the following property. 
\begin{num} \label{Tellrhb} For every $\ell \nmid Np$, the elements $T_{\ell,\rhob} - \tr \rhob(\Frob_\ell)$ and $\ell S_\ell - \det \rhob(\Frob_\ell)$ belong to the maximal ideal $\mm_\rhob$ of $A_\rhob$ (or equivalently, are topologically nilpotent in $A_\rhob$).\end{num}

\par \bigskip

The decomposition of $A$ as $\prod A_ \rhob$ gives rise to corresponding decompositions of $M=M(N,\F)$ and $\Fc=\Fc(N,\F)$:
$$M = \oplus_{\rhob \in R(N,p)} M_\rhob,\ \  \Fc = \oplus_{\rhob \in R(N,p)} \Fc_\rhob,$$
such that $A_\rhob M_\rhob = M_\rhob$ and $A_{\rhob} M_{\rhob'}=0$ if $\rhob \neq \rhob'$, and similarly for $\Fc$.
In other words, $M_\rhob$ (or $\Fc_\rhob$) is the common generalized eigenspace in $M$ (respectively $\Fc$) for all the operators $T_\ell$ and $\ell S_\ell$ 
($\ell \nmid Np)$ with generalized eigenvalues $\tr \rhob(\Frob_\ell)$ and $\det \rhob(\Frob_\ell)$.

Let $\rhob \in R$. Since $A$ acts faithfully on $M$, the algebra $A_\rhob$ acts faithfully on $M_\rhob$. In particular $M_\rhob$ is non-zero. It is easy to deduce that $M_\rhob$ contains a non-zero eigenform for the all the Hecke operators $T_\ell$ and $S_\ell$, $\ell \nmid Np$. We shall need in one occasion the following slightly more precise result, due to Ghitza \cite{ghitza}.

\begin{num} \label{cuspidaleigenform} Let $\rhob \in R$. There exists a form $f=\sum_{n=1}^\infty a_n q^n$ in $M_\rhob$ with $a_0=0$, $a_1=1$ and that is an eigenform for all the Hecke
operators $T_\ell$ and $S_\ell$, $\ell \nmid Np$.
\end{num}
Indeed, according to \cite[Theorem 1]{ghitza}, there exists an eigenform $h \in M_\rhob$ which is cuspidal, that is such that $a_0(h)=0$. Let $m \in \N$ such that $a_m(h) \neq 0$. Then $f=\frac{1}{a_m(h)} U_m h$ is an eigenform and satisfies $a_0(f)=0$, $a_1(f)=1$.

\subsection{The Hecke modules $Af$ and the Hecke algebra $A_f$}

Recall that if $f \in M(N,\F)$, we defined $Af$ to be the submodule of $M$ (over $A$) 
generated by $f$, which by \ref{locfin} is a finite-dimensional vector space over $\F$.
We shall denote by $A_f$  the image of $A$ under the restriction map $\End_{\F}(M) \rightarrow \End_{\F}(Af)$.
Thus $A_f$ is a finite dimensional quotient of $A$. We continue to denote by $T_\ell$ and $S_\ell$ the images 
of $T_\ell$ and $S_\ell$ in $A_f$.

\subsection{The support $R(f)$ of a modular form}

 For $f \in M$, we define the support of $f$ to be the subset of $R$ consisting of those 
representations $\rhob$ such that the component $f_\rhob$ of $f$ in 
$M_\rhob$ is non-zero.  We will denote the support of $f$ by $R(f)$.  Thus $R(f) = \emptyset$ if and only if $f=0$, and $R(f)$ is a singleton $\{\rhob\}$ if and only if $f$ is a generalized eigenform for all the operators $T_\ell$ and $S_\ell$ (with $\ell \nmid Np$). 
Equivalently, $R(f)$ is the smallest subset of $R$ such that the natural surjection $A=\prod_{\rhob \in R} A_\rhob \rightarrow A_f$ factors through $\prod_{\rhob \in R(f)} A_\rhob$.  In view of \ref{Tellrhb}, we have the following lemma. 

\begin{lemma} \label{tgnil} Let $\ell \nmid Np$. The action of the operator $T_\ell$ on the finite-dimensional space $Af$ is nilpotent 
if and only if $\Frob_\ell \in N_\rhob$  for every $\rhob \in R(f)$.  Similarly, the action of $R_\ell$ on $Af$ is invertible if and only if $\Frob_\ell \in U_\rhob$ for every $\rhob\in R(f)$.  
\end{lemma}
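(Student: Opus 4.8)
The plan is to reduce the statement to the local factors $A_\rhob$ of $A=\prod_{\rhob\in R}A_\rhob$ and then exploit the congruence \ref{Tellrhb} together with elementary facts about Artinian local rings. (The operator written $R_\ell$ in the second assertion is the Hecke operator $T_\ell$, in keeping with the definition of a pure form; I treat the two assertions in parallel.) First I would decompose $f=\sum_{\rhob\in R}f_\rhob$ according to $M=\bigoplus_\rhob M_\rhob$, so that by definition $R(f)=\{\rhob:f_\rhob\neq 0\}$. Since the idempotents cutting out the factors $A_\rhob$ of $A$ lie in $A$, applying them to $f$ yields $Af=\bigoplus_{\rhob\in R(f)}A_\rhob f_\rhob$, a direct sum of nonzero $\F$-subspaces, finite-dimensional by \ref{locfin}, each stable under $A$, and such that any $T\in A$ acts on the summand $A_\rhob f_\rhob$ through its image $T_\rhob$ in $A_\rhob$.

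Next I would study a single summand. The $A_\rhob$-module $A_\rhob f_\rhob$ is cyclic, hence isomorphic to $A_\rhob/I_\rhob$ where $I_\rhob=\ann_{A_\rhob}(f_\rhob)$ is a proper ideal (because $f_\rhob\neq 0$); this quotient is a nonzero local ring, and being finite-dimensional over $\F$ it is Artinian, with maximal ideal the image of $\mm_\rhob$ (using $I_\rhob\subseteq\mm_\rhob$, which holds since $I_\rhob$ is a proper ideal of the local ring $A_\rhob$). In an Artinian local ring the maximal ideal is nilpotent and equals the nilradical, so an element is nilpotent exactly when it lies in the maximal ideal and is a unit exactly when it does not. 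Hence multiplication by $T_\rhob$ on $A_\rhob f_\rhob$ is nilpotent iff $T_\rhob\in\mm_\rhob$ and invertible iff $T_\rhob\notin\mm_\rhob$. As $T$ acts block-diagonally on $Af=\bigoplus_{\rhob\in R(f)}A_\rhob f_\rhob$, it follows that $T$ is nilpotent on $Af$ iff $T_\rhob\in\mm_\rhob$ for every $\rhob\in R(f)$, and invertible on $Af$ iff $T_\rhob\notin\mm_\rhob$ for every $\rhob\in R(f)$.

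Finally I would take $T=T_\ell$ with $\ell\nmid Np$ and translate these conditions into the trace conditions defining $N_\rhob$ and $U_\rhob$. By \ref{Tellrhb} one has $T_{\ell,\rhob}-\tr\rhob(\Frob_\ell)\in\mm_\rhob$; since $\tr\rhob(\Frob_\ell)\in\F$ and $\F\cap\mm_\rhob=0$ (the residue field of $A_\rhob$ being $\F$), this gives $T_{\ell,\rhob}\in\mm_\rhob$ iff $\tr\rhob(\Frob_\ell)=0$, i.e. $\Frob_\ell\in N_\rhob$, and $T_{\ell,\rhob}\notin\mm_\rhob$ iff $\tr\rhob(\Frob_\ell)\neq 0$, i.e. $\Frob_\ell\in U_\rhob$. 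Combined with the previous step, this proves both halves of the lemma. The argument is essentially bookkeeping once the structure theory of \S\ref{heckealgebra} is available; the one point deserving attention is that $A_\rhob$ need not itself be Noetherian, so one must first pass to the finite-dimensional cyclic quotient $A_\rhob f_\rhob\cong A_\rhob/\ann_{A_\rhob}(f_\rhob)$ before applying the ``nilpotent $\Leftrightarrow$ maximal ideal $\Leftrightarrow$ non-unit'' dichotomy valid in Artinian local rings. I do not expect any genuine obstacle.
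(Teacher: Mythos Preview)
Your argument is correct and is precisely what the paper has in mind: it records the lemma as an immediate consequence of \ref{Tellrhb}, and your write-up simply makes explicit the decomposition $Af=\bigoplus_{\rhob\in R(f)}A_\rhob f_\rhob$ together with the unit/nilpotent dichotomy in the finite local quotient $A_\rhob/\ann_{A_\rhob}(f_\rhob)$. Your identification of ``$R_\ell$'' with $T_\ell$ and your caution about passing to the Artinian quotient before invoking the nilpotent--maximal-ideal equivalence are both well taken.
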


\subsection{Pure modular forms and the invariants $\alpha(f)$ and $h(f)$}

\label{puredef}

\begin{definition} We say that $f \in M$ is {\it pure}  if for every $\rhob, \rhob' \in R(f)$, one has $N_\rhob=N_{\rhob'}$, or equivalently $U_\rhob=U_{\rhob'}$.  If $f$ is pure, and non-zero, we denote by $N_f$ and $U_f$ the common sets $N_{\rhob}$ and $U_{\rhob}$ for $\rhob \in R(f)$.   Further, we let ${\mathcal N}_f$ and ${\mathcal U}_f$ denote the sets of primes $\ell \nmid Np$ with $\Frob_\ell \in N_f$ and $\Frob_\ell \in U_f$ respectively.  
\end{definition}

Note that generalized eigenforms are pure, but that the converse is false in general.
 Also note that, by Lemma \ref{tgnil}, if $f$ is non-zero and pure, and $\ell \nmid Np$ then $T_\ell$ is 
 nilpotent on $Af$ if $\ell \in \Nc_f$, and $T_\ell$ is invertible on $Af$ if $\ell \in \Uc_f$.  

\begin{definition} Let $f$ be a pure, non-zero, modular form. We define $\alpha(f)=\mu_{G_{\Q,Np}}(N_f)$, so that $\alpha(f)=\alpha(\rhob)$ for any $\rhob \in R(f)$.
We define the {\it strict order of nilpotence} of $f$,  denoted by $h(f)$, as the largest integer $h$ such that there exist (not necessarily distinct) prime numbers $\ell_1,\dots,\ell_h \nmid Np$ in $\Nc_f$ with $T_{\ell_1} \dots T_{\ell_h} f \neq 0$.
\end{definition}

Note that in the definition of the strict order of nilpotence, the largest integer $h$ exists and is no more than the dimension of $Af$, since the $T_{\ell_i}$ act nilpotently on $Af$ for $\ell_i \in \Nc_f$.

\begin{num}  \label{decfpure} Given a general non-zero form $f$, partition the finite set $R(f)$ into equivalence classes $R_i(f)$ based on the equivalence relation $\rhob\sim\rhob'$ if and only if $N_\rhob=N_\rhob'$.  Thus we may write 
$$
f = \sum_i f_i,\ \ \ f_i=\sum_{\rhob \in R_i(f)} f_\rhob,
$$
 so that the $f_i$ are pure. We call this decomposition the {\rm canonical decomposition of $f$ into pure forms}.
 \end{num}

We now extend the definition of $\alpha(f)$ and $h(f)$ to forms that are not necessarily pure. 

\begin{definition} If $f=\sum_i f_i$ is the canonical decomposition of $f$ into pure forms,
we set $\alpha(f) = \min_i \alpha(f_i)$, and $h(f)=\max_{i, \alpha(f_i)=\alpha(f)} h(f_i)$.
\end{definition}

\subsection{Existence of a pseudorepresentation and consequences} 

\begin{prop}
There exist continuous maps $t: G_{\Q,Np} \rightarrow A$, $d: G_{\Q,Np} \rightarrow A$ such that
\begin{itemize}
\item[(i)] $d$ is a morphism of groups $G_{\Q,Np} \rightarrow A^\ast$.
\item[(ii)] $t$ is central (i.e. $t(gh)=t(hg)$)
\item[(iii)] $t(1)=2$.
\item[(iv)] $t(gh)+t(gh^{-1})d(h)=t(g) t(h)$ for all $g,h \in G_{\Q,Np}$
\item[(v)] $t(\Frob_\ell)=T_\ell$ for all $\ell \nmid Np$.
\item[(vi)] $d(\Frob_\ell)=\ell S_\ell$ for all $\ell \nmid Np$.
\end{itemize}
\end{prop}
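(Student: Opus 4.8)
The plan is to realize $A=A(N,\F)$ as a quotient of a Hecke algebra on which a two-dimensional Galois pseudorepresentation is already available, and then pull it back. First I would invoke the theorem of Serre and Katz that mod-$p$ modular forms of tame level $N$ are precisely the reductions modulo $p$ of Katz's $p$-adic modular forms of level $N$; combined with the $q$-expansion principle this identifies $M(N,\F)$, as a module over the Hecke operators, with the reduction modulo $p$ of the space $\mathbf S$ of $p$-adic modular forms over $\W(\F)$ (the ring of Witt vectors of $\F$). Consequently $A$ is a continuous quotient of $\mathbf T/p\mathbf T$, where $\mathbf T\subset\End(\mathbf S)$ is the profinite ``big'' Hecke algebra generated by the $T_\ell$ and $S_\ell$ ($\ell\nmid Np$).

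Next I would use the pseudorepresentation carried by $\mathbf T$. It is obtained by interpolation: the systems of Hecke eigenvalues of classical eigenforms are Zariski-dense in $\Spec\mathbf T$; each such eigenform $f$ has an attached Deligne representation $\rho_f\colon G_{\Q,Np}\to\GL_2$ with $\tr\rho_f(\Frob_\ell)$ and $\det\rho_f(\Frob_\ell)$ equal to the $T_\ell$- and $\ell S_\ell$-eigenvalues of $f$ for all $\ell\nmid Np$; and since the locus in $\Spec\mathbf T$ on which the pseudorepresentation identities hold is closed and contains this dense set, Chenevier's gluing theorem for pseudorepresentations yields continuous maps $\mathbf t\colon G_{\Q,Np}\to\mathbf T$ and $\mathbf d\colon G_{\Q,Np}\to\mathbf T^{\times}$ with $\mathbf t(\Frob_\ell)=T_\ell$, $\mathbf d(\Frob_\ell)=\ell S_\ell$, and satisfying the analogues of (i)--(iv) together with $\mathbf t(1)=2$ (these hold at each classical point because they are satisfied by the trace and determinant of a genuine two-dimensional representation, and they propagate by continuity and density). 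Composing with the continuous ring homomorphism $\mathbf T\to\mathbf T/p\mathbf T\twoheadrightarrow A$ gives the desired $t$ and $d$: properties (i)--(iv) are preserved by ring homomorphisms, (v) and (vi) hold because $T_\ell$ and $S_\ell$ map to $T_\ell$ and $S_\ell$, and continuity is clear.

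The main obstacle is that, unlike a single eigenform, a generalized eigenform --- and more generally the typically non-reduced ring $A_\rhob$ --- carries no Galois representation of its own, so the pseudorepresentation cannot simply be read off and must be produced by a genuine interpolation argument. The delicate point is to do this in characteristic $p$, where the Hecke algebra is non-reduced and could a priori acquire nilpotents not detected by classical forms; this is exactly why one passes through the big $p$-adic Hecke algebra $\mathbf T$, which is reduced and whose pseudorepresentation therefore comes for free from the Zariski density of classical points. (One can also argue componentwise using $A=\prod_{\rhob\in R(N,p)}A_\rhob$ and the deformation theory of two-dimensional pseudorepresentations, reducing via Chebotarev to the Eichler--Shimura relation \ref{es} at the genuine eigenforms guaranteed by \ref{cuspidaleigenform}, but the characteristic-$p$ difficulty is the same and the $p$-adic route is cleaner.)
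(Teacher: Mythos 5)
Your proposal is correct and follows essentially the same route as the paper, which gives only a one-line sketch (glue the traces and determinants of the Deligne representations attached to characteristic-zero eigenforms, then reduce mod $p$) and defers the details to \cite{BK}. Your fleshed-out version — passing through the big reduced $p$-adic Hecke algebra, using Zariski density of classical points and Chenevier's gluing theorem, then pushing forward along $\mathbf T \twoheadrightarrow A$ — is precisely how that reference carries it out.
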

 The uniqueness of such a pair $(t,d)$ is clear: the function $t$ is
characterized uniquely by (ii) and (v) alone using the Chebotarev density theorem, and $d$ is characterized by (i) and (vi) (or else by (iv), see (\ref{dt}) in Remark~\ref{remarktd} below). The existence of $t$ and $d$ is proved by ``glueing"
the traces and determinants of the representations attached by Deligne to eigenforms in characteristic zero, and then reducing modulo $p$. For details, see~\cite{BK}.

\begin{remark} \label{remarktd}
The properties (i) to (iv) express the fact that $(t,d)$ is a pseudo-representation of dimension $2$. The map $t$ is called the trace, and the map $d$ is called the determinant of the representation $(t,d)$, cf. \cite{chenevier}. It is easy to check that the trace and determinant of any continuous two dimensional representation (of a topological group over any topological commutative ring) satisfy properties (i) to (iv). Since $p>2$, one can recover $d$ from $t$ by the formula 
\begin{eqnarray} \label{dt} d(g)=(t(g)^2-t(g^2))/2, \end{eqnarray} which follows upon taking $g=h$ in (iv) and using (iii). 
\end{remark}

We prove for later use the following lemma.
\begin{lemma} \label{tgp} For every $g \in G_{\Q,Np}$ one has $t(g^p)=t(g)^p$.
\end{lemma}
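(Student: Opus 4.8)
The plan is to reduce the claim $t(g^p) = t(g)^p$ for the $A$-valued pseudo-character $(t,d)$ to the analogous statement for honest $\bar{\F}_p$-valued representations, by using the product decomposition $A = \prod_{\rhob \in R(N,p)} A_\rhob$ together with the characterization of the local components. First I would observe that, since the projections $A \to A_\rhob$ are ring homomorphisms and $A$ embeds into $\prod_\rhob A_\rhob$, it suffices to prove $t_\rhob(g^p) = t_\rhob(g)^p$ in each $A_\rhob$ separately, where $t_\rhob$ denotes the composite $G_{\Q,Np} \xrightarrow{t} A \to A_\rhob$. Each $A_\rhob$ is a complete local $\F$-algebra with residue field $\F$, hence is a finite local $\F$-algebra of characteristic $p$; write $\mm_\rhob$ for its maximal ideal and recall from \ref{Tellrhb} that $t_\rhob(\Frob_\ell) = T_{\ell,\rhob} \equiv \tr\rhob(\Frob_\ell) \pmod{\mm_\rhob}$, so that $t_\rhob$ reduces modulo $\mm_\rhob$ to $\tr \rhob$.

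The key point is that in a commutative $\F_p$-algebra the $p$-th power map is a ring endomorphism (the Frobenius), so for the matrix algebra $M_2(\bar\F_p)$ we have $\tr(M^p) = \tr(M)^p$ for every $M$: indeed over $\bar\F_p$ one may conjugate $M$ to upper triangular form, and then $M^p$ is upper triangular with diagonal entries the $p$-th powers of those of $M$, whence $\tr(M^p) = \lambda_1^p + \lambda_2^p = (\lambda_1+\lambda_2)^p = \tr(M)^p$. Applying this to $M = \rhob(g)$ gives $\tr\rhob(g^p) = \tr\rhob(g)^p$, i.e. the lemma holds after reduction modulo $\mm_\rhob$. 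This is the ``zeroth order'' version of what we want; the work is to promote it to an identity in $A_\rhob$ itself.

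To lift this, I would argue by continuity and density. By the Chebotarev density theorem the Frobenius elements $\Frob_\ell$ ($\ell \nmid Np$) are dense in $G_{\Q,Np}$, and both sides of the claimed identity, $g \mapsto t(g^p)$ and $g \mapsto t(g)^p$, are continuous functions $G_{\Q,Np} \to A$ (composition of the continuous $t$ with continuous operations). Since $A$ is finite and hence Hausdorff, it therefore suffices to verify $t(\Frob_\ell^p) = t(\Frob_\ell)^p$ for all $\ell \nmid Np$; but the conjugacy class of $\Frob_\ell^p$ need not be a single Frobenius, so instead I would use that $t$ is a pseudo-character and express $t(g^p)$ as a universal polynomial in $t(g), d(g)$ via the Cayley--Hamilton-type recursion coming from (iv): setting $h = g$ in (iv) yields $t(g^2) = t(g)^2 - 2d(g)$, and iterating (using (iv) with the pair $(g^n, g)$) expresses $t(g^{n+1})$ in terms of $t(g^n), t(g^{n-1}), t(g), d(g)$; thus $t(g^p) = P_p(t(g), d(g))$ for a fixed polynomial $P_p$ with $\Z$-coefficients, the same polynomial that computes $\tr(M^p)$ from $\tr M$ and $\det M$ for $2\times 2$ matrices. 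Over $\F_p$ that polynomial identity reads $P_p(X, Y) \equiv X^p \pmod p$ when evaluated on the "matrix locus" — more precisely, for any $2 \times 2$ matrix $M$ over any $\F_p$-algebra one has $\tr(M^p) = \tr(M)^p$, which by the theory of the free $2$-dimensional pseudo-character (or by a direct Newton's-identity computation) forces the polynomial identity $P_p(t,d) = t^p$ to hold for the \emph{universal} pair $(t,d)$, hence in every pseudo-character of dimension $2$ over an $\F_p$-algebra, in particular for our $(t,d)$ over $A$. The main obstacle is this last step: making precise that the formal recursion $P_p$ really collapses to $X^p$ modulo $p$ — I would handle it either by invoking the Cayley--Hamilton property of pseudo-representations (every $2$-dimensional pseudo-character $g \mapsto$ "satisfies $g^2 - t(g) g + d(g) = 0$" in the associated Cayley--Hamilton algebra, so $g^p$ is congruent mod $p$ to $t(g)^{p-1} g$ up to lower-order terms that a Newton-identity induction shows contribute $t(g)^p$ to the trace), or, more cheaply and self-containedly, by the following reduction: the identity $t(g^p) = t(g)^p$ to be proved is a specialization of the universal identity over the universal ring $R^{\mathrm{univ}} = \Z[\,t(\gamma), d(\gamma)\,]/(\text{pseudo-char relations})$ for the free profinite group on one generator $\gamma$, and over $R^{\mathrm{univ}} \otimes \F_p$ this ring maps to $M_2$ of an $\F_p$-algebra (the generic matrix algebra) where we have already checked the identity and where $t, d$ are injective enough to detect it; concretely it suffices to check $P_p(x+y, xy) = (x+y)^p$ as a polynomial identity in $\F_p[x,y]$, which is immediate since $P_p(x+y, xy) = x^p + y^p$ by construction of $P_p$ as the power-sum $p_p$ in terms of $e_1, e_2$, and $x^p + y^p = (x+y)^p$ in characteristic $p$. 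Assembling: $t(g^p) = P_p(t(g), d(g)) = t(g)^p$ in $A$, as desired. $\Box$
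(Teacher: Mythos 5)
Your core argument is correct and in essence the same as the paper's: both proofs reduce the claim to a universal polynomial identity over $\F_p$ coming from the pseudo-character recursion of axiom (iv). You make the recursion explicit, defining $P_n$ by $P_0=2$, $P_1=T$, $P_{n+1}=TP_n - DP_{n-1}$, so that $t(g^n)=P_n(t(g),d(g))$, and you verify $P_p(T,D)=T^p$ in $\F_p[T,D]$ by evaluating at $T=x+y$, $D=xy$ in $\F_p[x,y]$, where $P_p(x+y,xy)=x^p+y^p=(x+y)^p$. The paper packages the same recursion via the companion matrix $m=\left(\begin{smallmatrix}0 & -1\\ d(g) & t(g)\end{smallmatrix}\right)$, showing $\tr(m^n)=t(g^n)$, reducing to the generic companion matrix over $\F_p[D,T]$, and triangularizing over a field extension; this is just a more compact way of invoking the same Newton-type identity. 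The one genuine difference is cosmetic: your first two moves — the reduction to the local components $A_\rhob$ and the Chebotarev/density attempt — are dead ends that you yourself abandon (the local components do not simplify the ring, and $\Frob_\ell^p$ is not a Frobenius), so they add length without contributing; the proof really begins with ``express $t(g^p)$ as a universal polynomial in $t(g),d(g)$.'' With those detours trimmed, your argument matches the paper's.
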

\begin{pf} Let $m \in \GL_2(A)$ be the matrix $\mat{ 0 & -1 \\ d(g) & t(g)}$ so that $\tr(m)=t(g)$ and $\det(m)=d(g)$. Since the function $\tr$ and $\det$ 
on the multiplicative subgroup generated by $m$ satisfy properties (i) to (iv) above, one sees easily by induction on $n$ that $\tr(m^n)=t(g^n)$ for all $n$.
Thus it suffices to prove that $\tr(m^p)=\tr(m)^p$. 

Let  $f:\F_p[D,T] \rightarrow A$ be the morphism of rings sending $D$ to $d(g)$ and $T$ to $t(g)$, where $D$ and $T$ are two indeterminates. Let $M \in \GL_2(\F_p[D,T])$ be the matrix $\mat{ 0 & -1 \\ D & T}$. Since $f(M)=m$, it suffices clearly to prove that $\tr(M^p)=\tr(M)^p$. Since $\F_p[D,T]$ can be embedded in an algebraic field $k$ of characteristic $p$, it suffices to prove that for all $M \in M_2(k)$, $\tr(M^p)=\tr(M)^p$.  Replacing $M$ by a conjugate matrix if necessary, we may assume that $M$ is triangular, in which case the formula is obvious.
\end{pf}


Let $f \in M(N,\F)$ be a modular form. 
Let $t_f:G \rightarrow A_f$ and 
$d_f: G \rightarrow A_f$ be the composition of $t$ and $d$ with the natural morphism of algebras $A \rightarrow A_f$. Note that $(t_f,d_f)$ satisfies the same 
properties (i) to (vi), and so in particular, $(t_f,d_f)$ is a pseudo-representation of $G$ on $A_f$. In particular, (v) reads
\begin{eqnarray} \label{tfl} t_f(\Frob_\ell) f = T_\ell f. \end{eqnarray}

We now deduce certain consequences of the existence of the pseudo-representation $(t,d)$ for the algebra $A$ and for modular forms $f \in M$. 
\begin{prop}\label{generatorsHecke} The Hecke algebra $A$ is topologically generated by the $T_\ell$ for $\ell \nmid Np$ alone (that is, without the $S_\ell$).
\end{prop}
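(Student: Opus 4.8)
The plan is to show that the closed $\F$-subalgebra $A'$ of $A$ topologically generated by the operators $T_\ell$ for $\ell \nmid Np$ equals $A$. Since $A$ is by definition topologically generated by the $T_\ell$ together with the $S_\ell$ (for $\ell \nmid Np$), it is enough to check that $S_\ell \in A'$ for every prime $\ell \nmid Np$; and as such an $\ell$ is a nonzero element of $\F_p$, hence invertible in $A$, it suffices to show $\ell S_\ell = d(\Frob_\ell) \in A'$.

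First I record that $A$ is a profinite ring. Indeed $M = M(N,\F)$ is discrete and $\End_\F(M)$ carries the compact-open topology, so the ideals $\{a \in A : aV = 0\}$, with $V$ ranging over the finite-dimensional $\F$-subspaces of $M$, form a basis of open ideals of $A$; and for each such $V$ the quotient $A/\{a \in A : aV = 0\}$ embeds into $\End_\F(AV)$, which is finite because $AV$ is finite-dimensional over the finite field $\F$ by \ref{locfin}. In particular the closed subalgebra $A'$ satisfies $A' = \bigcap_I (A' + I)$, where $I$ runs over the open ideals of $A$.

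Next I claim $t(g) \in A'$ for every $g \in G_{\Q,Np}$. Fix $g$ and an open ideal $I$ of $A$. The composition of $t$ with the projection $A \to A/I$ is a continuous map $G_{\Q,Np} \to A/I$ with finite image, hence factors through a finite quotient $\Gal(L/\Q)$ of $G_{\Q,Np}$, with $L/\Q$ finite Galois unramified outside $Np$ and $\infty$. By the Chebotarev density theorem there is a prime $\ell \nmid Np$ whose Frobenius $\Frob_\ell$ is $\Gal(L/\Q)$-conjugate to the image of $g$; since $t$ is central (property (ii) of the pseudo-representation), this gives $t(g) \equiv t(\Frob_\ell) = T_\ell \pmod{I}$, so $t(g) \in A' + I$. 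As $I$ was arbitrary, $t(g) \in A'$.

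Finally, since $p > 2$ we may invoke formula (\ref{dt}) of Remark~\ref{remarktd}: for $\ell \nmid Np$,
$$
\ell S_\ell = d(\Frob_\ell) = \frac{t(\Frob_\ell)^2 - t(\Frob_\ell^2)}{2} = \frac{T_\ell^2 - t(\Frob_\ell^2)}{2}.
$$
Here $T_\ell^2 \in A'$ because $T_\ell \in A'$, and $t(\Frob_\ell^2) \in A'$ by the claim applied to $g = \Frob_\ell^2$; hence $\ell S_\ell \in A'$, and so $S_\ell \in A'$. Therefore $A' = A$, as desired. The one substantive step is the claim that $t$ takes values in $A'$, which rests on the continuity of $t$, the profiniteness of $A$, and the Chebotarev density theorem (together with the centrality of $t$, needed to pass from conjugacy classes of Frobenius elements to arbitrary elements of $G_{\Q,Np}$); everything else is formal.
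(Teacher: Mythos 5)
Your proof is correct and follows essentially the same route as the paper: pass $t$ through density of the Frobenius elements (Chebotarev), use continuity of $t$ and closedness of $A'$ to get $t(G_{\Q,Np}) \subset A'$, then recover $\ell S_\ell = d(\Frob_\ell)$ from formula (\ref{dt}). You spell out the profiniteness of $A$ and the factoring through finite Galois quotients more explicitly than the paper, which simply invokes that the $\Frob_\ell$ are dense in $G_{\Q,Np}$, but the underlying argument is identical.
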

\begin{pf} 
Let $A'$ be the closed sub-algebra of $A$ generated by the $T_\ell$. Since the elements $\Frob_\ell$ for $\ell \nmid Np$ are dense in $G_{\Q,Np}$, and $t(\Frob_\ell) = T_\ell \in A'$,
one sees that $t(G_{\Q,Np}) \subset A'$. In particular, for $\ell$ not dividing $Np$, $t(\Frob_\ell^2) \in A'$, hence also $(t(\Frob_\ell^2)-t(\Frob_\ell)^2)/2$. But this element is just $d(\Frob_\ell)=\ell S_\ell $.  Hence $S_\ell \in A'$ and $A'=A$.
\end{pf}

\begin{lemma} \label{finitequot} There exists a finite quotient $G_f$ of $G_{\Q,Np}$ such that for $\ell \nmid Np$, the action of $T_\ell $ on $Af$ 
depends only on the image of $\Frob_\ell$ in $G_f$.
\end{lemma}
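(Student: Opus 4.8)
The plan is to reduce the statement to the assertion that the trace map $t_f\colon G_{\Q,Np}\to A_f$ factors, as a map of sets, through a finite quotient group of $G_{\Q,Np}$. Recall that $Af$ is finite-dimensional over the finite field $\F$ by \ref{locfin}, so $A_f$, being the image of $A$ in $\End_\F(Af)$, is a finite ring; moreover, by construction $A_f\hookrightarrow\End_\F(Af)$ and the $A$-module structure on $Af$ factors through $A_f$. By property (v) of the pseudo-representation (equivalently by \eqref{tfl}), for $\ell\nmid Np$ the image of $T_\ell$ in $A_f$ is exactly $t_f(\Frob_\ell)$; hence the endomorphism of $Af$ induced by $T_\ell$ is the image of $t_f(\Frob_\ell)$ under $A_f\hookrightarrow\End_\F(Af)$. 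Consequently, once we know $t_f$ factors through a finite quotient group $G_f$ of $G_{\Q,Np}$, it follows that the action of $T_\ell$ on $Af$ depends only on the image of $\Frob_\ell$ in $G_f$, which is the desired conclusion.

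It then remains to produce $G_f$. The map $t_f$ is continuous, its target $A_f$ is finite and discrete, and $G_{\Q,Np}$ is compact; hence $t_f$ takes only finitely many values and the fibers $t_f^{-1}(a)$, for $a\in A_f$, form a finite partition of $G_{\Q,Np}$ into clopen subsets. Since $G_{\Q,Np}$ is profinite, it admits a basis of open normal subgroups, and every clopen subset is a union of cosets of some open normal subgroup. Using the compactness of $G_{\Q,Np}$ together with the finiteness of this partition, I would choose a single open normal subgroup $K\trianglelefteq G_{\Q,Np}$ such that every fiber $t_f^{-1}(a)$ is a union of cosets of $K$. Then $t_f$ is constant on cosets of $K$, so it factors through the finite quotient group $G_f:=G_{\Q,Np}/K$, which completes the argument.

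There is essentially no serious obstacle here: the one point needing a little care is the standard profinite-group fact that a clopen subset is stable under translation by a suitable open normal subgroup, and that a single such subgroup can be chosen to work simultaneously for the finitely many fibers of $t_f$; this is elementary. One could alternatively avoid choosing $K$ explicitly by setting $H=\{g\in G_{\Q,Np}:\ t_f(xg)=t_f(x)\text{ for all }x\in G_{\Q,Np}\}$ and checking, using the centrality of $t_f$ (property (ii)), that $H$ is an open normal subgroup through which $t_f$ factors; but the fiber argument above is the most direct route.
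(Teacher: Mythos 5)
Your proof is correct and follows the same overall strategy as the paper: reduce the lemma to the assertion that $t_f$ factors through a finite quotient of $G_{\Q,Np}$, using that $A_f \hookrightarrow \End_\F(Af)$ and that $t_f(\Frob_\ell)$ is the image of $T_\ell$ in $A_f$ (the paper phrases this last step by writing $g=Tf$ and $T_\ell g = T T_\ell f$, but your embedding argument is a cleaner way to say the same thing). The one real difference is in how the finite quotient is produced. You take a generic open normal subgroup $K$ refining the finite clopen fiber partition of $t_f$ --- a standard topological fact about continuous maps from profinite groups to finite discrete sets. The paper instead defines $H=\{h: t_f(gh)=t_f(g)\ \text{for all}\ g\}$, uses centrality of $t_f$ (property (ii)) to show $H$ is normal, and takes $G_f = G_{\Q,Np}/H$, the canonical largest such quotient; you anticipate this construction in your closing paragraph. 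Note that the paper's assertion that $G_f$ is finite ``since $A_f$ is finite'' implicitly relies on the very topological fact you spell out (your $K$ is contained in $H$, so $G_{\Q,Np}/H$ is a quotient of the finite group $G_{\Q,Np}/K$), so the two arguments are essentially two presentations of one idea. The paper's version produces a canonical $G_f$, which is tidy for the subsequent use in Proposition~\ref{multifrob}; yours is shorter and does not need centrality at this point.
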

\begin{pf} 
Let $H$ denote the subset of $G_{\Q,Np}$ consisting of elements $h$ such that $t_f(gh)=t_f(g)$ 
for every $g \in G$.  Since $t$ is central (property (ii) above), it follows that $H$ is a normal subgroup of $G$.  
We call $H$ the {\it kernel} of the pseudo-representation $(t_f,d_f)$. By (\ref{dt}) and (iii) one has $d_f(h)=1$ for $h \in H$. Let $G_f = G_{\Q,Np}/H$. The maps
$t_f, d_f: G_{\Q,Np} \rightarrow A_f$ factor through $G_f$ to give maps $G_f \rightarrow A_f$, which we shall also denote by $t_f$ and $d_f$.  Note that by construction, there is no $h \neq 1$ in $G_f$ such that $t_f(gh)=t_f(g)$ for every $g \in G_f$. Since $A_f$ is finite, it follows easily that $G_f$ is a finite group.
Finally, by (\ref{tfl}), $T_\ell f$ depends only on $t_f(\Frob_\ell)$, which only depends on the image of $\Frob_\ell$ in $G_f$. Therefore if $g \in Af$, then $g=Tf$ for some $T \in A$ and $T_\ell g = T_\ell T f = T T_\ell f$ depends only on  the image of $\Frob_\ell$ in $G_f$.
\end{pf}

We draw three consequences of this lemma. 

\begin{prop}\label{squarefreenonzero} Let $f = \sum_{n=0}^\infty a_n q^n \in \Fc=\Fc(N,\F)$. If $f \neq 0$, there exists a square-free integer $n$ such that $a_n \neq 0$.
\end{prop}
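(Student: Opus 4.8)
The plan is to leverage Lemma~\ref{finitequot}, which provides a finite quotient $G_f$ of $G_{\Q,Np}$ controlling the action of the $T_\ell$ on $Af$. Since $f \neq 0$ lies in $\Fc$, we have $a_1(f) \neq 0$ is not automatic, but by \ref{qexp3} we know $a_n(f) = a_1(T_n f)$ for $(n,Np)=1$, and since $f \in \Fc$ all coefficients $a_n$ with $(n,Np) > 1$ vanish; hence $f \neq 0$ forces $a_n \neq 0$ for some $n$ coprime to $Np$, i.e. $T_n f \neq 0$ for some such $n$. The goal is to show one can take this $n$ to be square-free. Write $n = \ell_1^{e_1} \cdots \ell_r^{e_r}$ with the $\ell_i \nmid Np$ distinct; then $T_n f = \prod_i T_{\ell_i^{e_i}} f \neq 0$, so in particular for each $i$ the operator $T_{\ell_i^{e_i}}$ is not identically zero on the (finite-dimensional) space $A f$. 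I want to replace $T_{\ell_i^{e_i}}$ by $T_{\ell_i'}$ for a single well-chosen prime $\ell_i'$, keeping the product nonzero.

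First I would reduce to the local case. Decompose $f = \sum_i f_i$ into its canonical decomposition into pure forms (see \ref{decfpure}); since $f \neq 0$, some $f_i \neq 0$, and it suffices to find a square-free $n$ with $a_n(f_i) \neq 0$ — but actually a cleaner route is to keep $f$ whole and argue directly with $G_f$. The key observation is: for $\ell \nmid Np$, the action of $T_\ell$ on $Af$ depends only on the conjugacy class of $\Frob_\ell$ in $G_f$, and by \ref{tfl} combined with Lemma~\ref{tgp} ($t(g^p) = t(g)^p$), the higher Hecke operators at $\ell$ can be expressed in terms of $t_f$ evaluated at powers of $\Frob_\ell$ and $d_f(\Frob_\ell)$. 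Concretely, using the recursion in \ref{Tnmult} together with $t_f(\Frob_\ell) = T_\ell$ acting as $t_f(\Frob_\ell)$, one checks that $T_{\ell^e}$ acting on $Af$ equals $P_e(t_f(\Frob_\ell), d_f(\Frob_\ell))$ for a universal polynomial (the analogue of $\tr(\mathrm{Sym}^e)$ of a matrix with given trace and determinant). Since $G_f$ is finite, there are only finitely many possible values of $(\Frob_\ell \bmod G_f)$, hence finitely many possible operators $T_\ell$ on $Af$ — call this finite set $\mathcal{T} \subset \End_\F(Af)$, a commuting family. For the given $n$, the operator $\prod_i T_{\ell_i^{e_i}} f \ne 0$; I want to show that for each $i$ there is a prime $\ell_i' \nmid Np$ such that $T_{\ell_i'}$ acting on $Af$ coincides with $T_{\ell_i^{e_i}}$, OR at least that the product can be rearranged into a product of genuine $T_{\ell}$'s (with distinct primes) that is still nonzero on $f$.

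The cleanest way to achieve this: by Chebotarev, every conjugacy class of $G_f$ is $\Frob_{\ell}$ for infinitely many primes $\ell \nmid Np$. The operator $T_{\ell_i^{e_i}}$ on $Af$ is $P_{e_i}(t_f(c_i), d_f(c_i))$ where $c_i = \Frob_{\ell_i} \in G_f$; I claim this equals $t_f(c_i^{e_i'})$ for a suitable $e_i'$, hence equals the operator $T_{\ell_i'}$ for any prime $\ell_i'$ with $\Frob_{\ell_i'} = c_i^{e_i'}$ — but this isn't quite right since $P_e$ is not $t_f$ of a power in general. So instead I argue at the level of $n$: replace $n$ by $n_0 = \prod_i \ell_i$ (the square-free radical). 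It is \emph{not} automatic that $T_{n_0} f \neq 0$. The honest fix, and the step I expect to be the main obstacle, is this: among all $n$ coprime to $Np$ with $a_n(f) \neq 0$, pick one with the minimal number of prime factors counted \emph{with multiplicity}; if some $\ell^2 \mid n$, write $n = \ell^2 m$; then $a_n(f) = a_1(T_{\ell^2} T_m f)$, and using $T_{\ell^2} = T_\ell^2 - \ell S_\ell$ one gets $a_{\ell^2 m}(f) = a_1(T_\ell^2 T_m f) - \ell\, a_1(S_\ell T_m f)$. Now $T_\ell^2 T_m f = T_\ell T_{\ell m} f$ has $a_1(\cdot) = a_{\ell m}(T_\ell f) \cdot$ ... one should unwind this to express things in terms of coefficients $a_{\ell^j m}(f)$ and $a_{m}(S_\ell f)$, and conclude that if $a_{\ell^2 m}(f) \neq 0$ then either $a_{\ell m}(f) \neq 0$ (contradicting minimality, since $\ell m$ has one fewer prime factor) or a coefficient involving $S_\ell$ is nonzero. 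Dealing with $S_\ell$ requires noting $S_\ell$ acts invertibly (it is a unit in $A$, being $\ell^{-1} d(\Frob_\ell)$ with $d$ group-valued), and $S_\ell$ commutes with all $T_m$ and preserves $\Fc$; moreover $S_\ell$ does not change which $q$-coefficients are nonzero in a way that... — here I would use that $S_\ell$ acts on the finite set $Af$ and on $\Fc \cap Af$, and push the argument through by a descent on the number of prime factors. I expect the bookkeeping with $S_\ell$ and the Hecke recursion at $\ell$ to be the delicate point; everything else (finiteness of $Af$, Chebotarev, the reduction to coprime-to-$Np$ coefficients) is routine given the results already established.
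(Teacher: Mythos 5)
Your reduction to finding a square-free $n$ coprime to $Np$ with $a_1(T_n f) \neq 0$ is correct, and so is the use of Lemma~\ref{finitequot} together with Chebotarev to replace a prime by another with the same Frobenius image in $G_f$. But the descent on the number of prime factors (with multiplicity) collapses. Unwinding your identity via \ref{qexp2} and \ref{qexp3}: one has $a_1(T_\ell^2 T_m f) = a_\ell(T_\ell T_m f) = a_{\ell^2}(T_m f) + \ell\, a_1(S_\ell T_m f) = a_{\ell^2 m}(f) + \ell\, a_m(S_\ell f)$, so that the two $S_\ell$-terms in $a_{\ell^2 m}(f) = a_1(T_\ell^2 T_m f) - \ell\, a_1(S_\ell T_m f)$ cancel identically and you recover $a_{\ell^2 m}(f)=a_{\ell^2 m}(f)$. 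No relation between $a_{\ell^2 m}(f)$ and coefficients at indices with fewer prime factors is produced, so there is nothing to descend on. You also correctly noted, and then set aside, that $T_{\ell^e}$ on $Af$ equals $P_e(t_f(\Frob_\ell),d_f(\Frob_\ell))$ for a Chebyshev-type polynomial $P_e$, which is in general not $t_f$ of any group element (e.g.\ $P_2 = t^2-d$, while $t(g^2)=t^2-2d$); so one cannot replace $T_{\ell^e}$ by a single $T_{\ell'}$.

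The missing idea is Proposition~\ref{generatorsHecke}: the Hecke algebra $A$ is topologically generated by the $T_\ell$ ($\ell\nmid Np$) \emph{alone}, without the $S_\ell$. That proposition, proved using the pseudo-representation and the identity $d(g)=(t(g)^2-t(g^2))/2$, is precisely how the $S_\ell$-obstruction you flag is eliminated. Given it, the argument is short: since $Af$ is finite, $T\mapsto a_1(Tf)$ is a continuous linear functional on $A$ and $a_1(T_nf)\neq 0$; writing the image of $T_n$ in $A_f$ as an $\F$-linear combination of monomials $T_{\ell_1}\cdots T_{\ell_s}$ (with the $\ell_i$ primes not dividing $Np$, possibly repeated) forces $a_1(T_{\ell_1}\cdots T_{\ell_s} f)\neq 0$ for some monomial. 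Then Lemma~\ref{finitequot} and Chebotarev let you replace the $\ell_i$ one by one with distinct primes $\ell_i'$ having the same Frobenius in $G_f$, preserving the operator, and $m=\ell_1'\cdots\ell_s'$ is the desired square-free integer with $a_m(f) = a_1(T_m f)\neq 0$.
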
 
\begin{pf} Since $f$ is non-zero, $a_n\neq 0$ for some $n\in \N$, and since $f\in \Fc$ one has $(n,Np)=1$. Thus
$a_1(T_n f)\neq 0$. By Proposition~\ref{generatorsHecke}, $T_n$ is a limit of linear combinations of terms of the form $T_{\ell_1} \dots T_{\ell_s}$ with $\ell_1,\dots,\ell_s$ being (not necessarily distinct) primes all not dividing $Np$. Since $T \mapsto a_1(Tf)$ is continuous and linear, we deduce that $a_1(T_{\ell_1} \cdots T_{\ell_s} f) \neq 0$ for some primes $\ell_1, \dots \ell_s$ not dividing $Np$ (again not necessarily distinct). Since the action of $T_{\ell_i}$ on $Af$ depends only on $\Frob_{\ell_i}$ in the finite Galois group $G_f$, one can replace $\ell_i$ by any other prime whose Frobenius has the same image without affecting the action of $T_{\ell_i}$.  In this manner, we may find distinct primes $\ell_i^{\prime}$ such that 
$T_{\ell_1}\cdots T_{\ell_s} = T_{\ell_1^{\prime}} \cdots T_{\ell_s^{\prime}}$, and then with $m=\ell_1^{\prime}\cdots \ell_s^{\prime}$ it follows that $a_m(f)=a_1(T_{m} f ) =a_1(T_{\ell_1'}\cdots T_{\ell_s'} f) = a_1(T_{\ell_1} \dots T_{\ell_s} f) \neq 0$.
\end{pf}

\begin{prop} \label{multifrob} Let $f\in M(N,\F)$ be a pure form, and let $f^{\prime}$ be any element of $M(N,\F)$.  Let $h$ be a non-negative integer, and let $\Mc$ denote the set of square-free integers $m$ having exactly $h$ prime factors, all from the set $\Nc_f$, and such that $T_m f = f'$. Then $\Mc$ is multi-frobenian.
\end{prop}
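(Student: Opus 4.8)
The plan is to construct the triple $(L, G, D)$ witnessing that $\Mc$ is $\Sigma$-multi-frobenian of height $h$, with $\Sigma$ the set of primes dividing $Np$, directly out of Lemma~\ref{finitequot}. We may assume $f \neq 0$, since $\Nc_f$ is only defined in that case. I would take $G := G_f$, the finite quotient of $G_{\Q,Np}$ furnished by Lemma~\ref{finitequot}, and let $L/\Q$ be the finite Galois extension, inside the maximal extension of $\Q$ unramified outside $Np$, with $\Gal(L/\Q) = G$; thus $L$ is unramified outside $\Sigma$ and $\infty$. Recall that $t_f$ and $d_f$ factor through $G$, giving a central map $t_f : G \to A_f$, and that $A_f$ acts faithfully on the finite-dimensional $\F$-vector space $Af$, with $t_f(\Frob_\ell)$ equal to the image of $T_\ell$ in $A_f$ for every $\ell \nmid Np$.

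I would then record two facts. First, for distinct primes $\ell_1, \dots, \ell_h \nmid Np$ with $m = \ell_1 \cdots \ell_h$, using $T_m = T_{\ell_1}\cdots T_{\ell_h}$ (property~\ref{Tnmult}), the stability of $Af$ under $A_f$, the commutativity of $A_f$, and $(\ref{tfl})$, one gets
$$ T_m f \;=\; t_f(\Frob_{\ell_1})\cdots t_f(\Frob_{\ell_h})\,f \;=\; t_f(g_1)\cdots t_f(g_h)\,f, $$
where $g_i \in G$ denotes the image of $\Frob_{\ell_i}$. Second, set $\bar N := \{ g \in G : t_f(g) \text{ is nilpotent in } A_f \}$, which is stable under conjugation because $t_f$ is central. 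Since $A_f$ acts faithfully on $Af$ and $T_\ell$ acts there as $t_f(\Frob_\ell)$, the operator $T_\ell$ is nilpotent on $Af$ if and only if the image of $\Frob_\ell$ in $G$ lies in $\bar N$; on the other hand, Lemma~\ref{tgnil} together with the purity of $f$ (which gives $N_\rhob = N_f$ for all $\rhob \in R(f)$) says that $T_\ell$ is nilpotent on $Af$ if and only if $\Frob_\ell \in N_f$, i.e. $\ell \in \Nc_f$. Hence $\ell \in \Nc_f$ if and only if the image of $\Frob_\ell$ in $G$ lies in $\bar N$.

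Then I would define
$$ D := \big\{ (g_1, \dots, g_h) \in G^h :\ g_i \in \bar N \ (1 \le i \le h), \text{ and } t_f(g_1)\cdots t_f(g_h)\,f = f' \big\}. $$
Stability of $D$ under the conjugation action of $G^h$ on itself follows from the conjugation-invariance of $\bar N$ together with the centrality of $t_f$, which gives $t_f(h_i g_i h_i^{-1}) = t_f(g_i)$; stability under permutations of the coordinates follows from the commutativity of $A_f$, which makes the product $t_f(g_1)\cdots t_f(g_h)$ symmetric, together with the evident symmetry of the condition $g_i \in \bar N$ for all $i$. Combining the two facts above: for distinct primes $\ell_1, \dots, \ell_h \nmid Np$, writing $m = \ell_1 \cdots \ell_h$ and letting $g_i$ be the image of $\Frob_{\ell_i}$ in $G$, the integer $m$ is automatically square-free with exactly $h$ prime factors; these all lie in $\Nc_f$ precisely when every $g_i \in \bar N$; and $T_m f = f'$ precisely when $t_f(g_1)\cdots t_f(g_h) f = f'$. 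Thus $m \in \Mc$ if and only if $(\Frob_{\ell_1}, \dots, \Frob_{\ell_h}) \in D$, and since conversely every element of $\Mc$ is a product of $h$ distinct primes not dividing $Np$, the triple $(L, G, D)$ exhibits $\Mc$ as $\Sigma$-multi-frobenian of height $h$.

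I do not expect a serious obstacle: Lemma~\ref{finitequot} has already done the essential work of making the relevant Hecke action factor through a finite Galois group. The points requiring care are the identity $T_m f = t_f(\Frob_{\ell_1}) \cdots t_f(\Frob_{\ell_h}) f$, which relies on $Af$ being an $A_f$-module and on the commutativity of $A_f$, and the translation of the condition $\ell \in \Nc_f$ into a conjugation-invariant condition on the image of $\Frob_\ell$ in $G_f$, which is exactly where the purity of $f$ enters via Lemma~\ref{tgnil}; the remaining verifications are routine bookkeeping about conjugacy classes of Frobenius elements and the invariance properties of $D$ demanded by the definition of a multi-frobenian set.
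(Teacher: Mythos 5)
Your proof is correct and follows essentially the same route as the paper's: both take $G := G_f$ from Lemma~\ref{finitequot} and define $D \subset G^h$ by the condition $t_f(g_1)\cdots t_f(g_h)f = f'$ with each $g_i$ landing in (the image of) $N_f$, then check invariance under conjugation and permutation. The only difference is presentational --- you spell out, via the nilpotence criterion $\bar N = \{g : t_f(g) \text{ nilpotent in } A_f\}$ and Lemma~\ref{tgnil}, why the condition $\ell \in \Nc_f$ factors through $G_f$, a point the paper leaves implicit by writing ``$g_i \in N_f$'' directly.
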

\begin{pf} Let $G_f$ be as in Lemma~\ref{finitequot} and let $D_{f,f'} \subset G_f^h$ denote the set of $h$-tuples 
$(g_1,\dots,g_h)$ such that $t_f(g_1) \dots t_f(g_h) f = f'$ and with all the $g_i \in N_f$.
Then $D_{f,f'}$ is invariant under conjugation and symmetric under permutations, and 
hence by definition $\Mc$ is the multi-frobenian set of weight $h$ attached to $D_{f,f'}$ and $G_f$.
\end{pf}

\begin{prop} \label{distinct} Let $f$ be a pure modular form. Then there exist $h(f)$ {distinct} primes $\ell_1,\dots,\ell_{h(f)}$ in $\Nc_f$ such that 
$T_{\ell_1} \dots T_{\ell_{h(f)}} f  \neq 0$.
\end{prop}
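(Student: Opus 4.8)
The plan is to take a witness for the strict order of nilpotence and ``spread out'' its primes into distinct ones by moving each within its Frobenius class, using the Chebotarev density theorem. Write $h=h(f)$; we may assume $f\neq 0$ and $h\ge 1$, the statement being vacuous when $h=0$. By the very definition of $h(f)$, there exist primes $\ell_1,\dots,\ell_h\nmid Np$, all lying in $\Nc_f$ but not necessarily distinct, such that $T_{\ell_1}\cdots T_{\ell_h}f\neq 0$. The goal is to replace the $\ell_i$ by pairwise distinct primes without changing any of these properties.

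The essential tool is Lemma~\ref{finitequot} together with the formula (\ref{tfl}): the operator $T_\ell$ acts on the finite-dimensional space $Af$ as multiplication by the scalar $t_f(\Frob_\ell)\in A_f$, and $t_f$ is central and factors through the finite group $G_f$. From this I would extract two facts. First, since $f$ is pure and non-zero, a prime $\ell\nmid Np$ lies in $\Nc_f$ if and only if $T_\ell$ acts nilpotently on $Af$ (Lemma~\ref{tgnil}); hence membership of $\ell$ in $\Nc_f$ depends only on the image of $\Frob_\ell$ in $G_f$. Second, if $\ell'$ is a prime whose Frobenius has image conjugate in $G_f$ to that of $\Frob_\ell$, then $t_f(\Frob_{\ell'})=t_f(\Frob_\ell)$ by centrality of $t_f$, so $T_{\ell'}$ and $T_\ell$ induce the same endomorphism of $Af$.

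Now I would choose, one index at a time, pairwise distinct primes $\ell_1',\dots,\ell_h'\nmid Np$ with $\Frob_{\ell_i'}$ having image in $G_f$ conjugate to that of $\Frob_{\ell_i}$. This is possible by the Chebotarev density theorem: the set of primes whose Frobenius falls in a fixed non-empty conjugacy class of $G_f$ has positive density, hence is infinite, so at each step infinitely many candidates are available and only finitely many already-chosen primes need be avoided. By the first fact above, each $\ell_i'$ again lies in $\Nc_f$; by the second fact, and since the $T_\ell$ commute and $T_{\ell_1}\cdots T_{\ell_h}f$ already belongs to $Af$, we get $T_{\ell_1'}\cdots T_{\ell_h'}f=T_{\ell_1}\cdots T_{\ell_h}f\neq 0$, which is what we want.

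I do not anticipate a genuine obstacle: this is a minor variant of the argument used to prove Proposition~\ref{squarefreenonzero}. The only point deserving attention is the verification that staying inside $\Nc_f$ is preserved under moving the primes within their Frobenius classes, and not merely the nonvanishing of the product; this is exactly where the identification of $\Nc_f$ with the nilpotence locus of the operators $T_\ell$ on $Af$ and its factorization through $G_f$ are used.
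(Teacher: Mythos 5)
Your proof is correct and takes essentially the same route as the paper: start from a witness with possibly repeated primes and use Lemma~\ref{finitequot} together with the Chebotarev density theorem to replace each prime by a distinct one in the same Frobenius class of $G_f$. The paper simply packages this Chebotarev step via Proposition~\ref{multifrob} (non-emptiness of the multi-frobenian set $\Mc_{f',f}$), whereas you carry it out directly, including the check---also implicit in the paper through the condition $g_i\in N_f$---that membership in $\Nc_f$ is preserved under moving a prime within its Frobenius class.
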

\begin{pf} That we can find $h(f)$ primes $\ell_1,\dots,\ell_{h(f)}$ in $\Nc_f$ such that 
$f' := T_{\ell_1} \dots T_{\ell_{h(f)}} f  \neq 0$ is just the definition of $h(f)$. In the notation of the previous  proposition we see that $D_{f,f'}$ is not empty as it contains $(\Frob_{\ell_1},\dots,\Frob_{\ell_{h(f)}})$.  Hence the multi-frobenian set $\Mc$ of that proposition is not empty, and there exist {distinct} 
primes $\ell'_1,\dots,\ell'_{h(f)}$ in $\Nc_f$ such that $T_{\ell'_1} \dots T_{\ell'_{h(f)}} f  = f'\neq 0$.
\end{pf}
\section{Asymptotics: Proof of Theorem \ref{main}} 


Let $f=\sum a_n q^n \in M=M(N,\F)$. We assume below that $f$ is not constant.   We set
$$ 
Z(f) = \{ n\in \N, a_n \neq 0\} \qquad \text{and} \qquad \pi(f,x) = |\{ n < x,  a_n \neq 0\}|, 
$$ 
and our goal is to establish an asymptotic for $\pi(f,x)$.   For a given $a\in \F^*$ it will also be convenient to define 
$$ 
Z(f,a) = \{ n\in \N, a_n =a\} \qquad \text{and} \qquad \pi(f,a,x) = |\{ n< x, a_n = a\}|. 
$$ 
By \ref{levelred}, we may assume without loss of generality that $(N,p)=1$, so all the results of \S\ref{heckealgebra} apply.


\subsection{Proof of Theorem~\ref{main} when $f \in \Fc(N,\F)$ and  $f$ is pure}
\label{proofasymppure}

We assume in this section  that $f$ is a pure form in $\Fc(N,\F)$. 
From~\S\ref{puredef} recall that the set of primes $\ell$ not dividing $Np$ may be partitioned into two sets, $\Uc_f$ and $\Nc_f$, such that $\ell \in \Uc_f$ if $T_\ell$ acts invertibly on $Af$ and $\ell \in \Nc_f$ if $T_\ell$ acts nilpotently on $Af$.  

Given $a\in \F^*$ we wish to prove an asymptotic for $\pi (f,a,x)$.  If $n$ is an integer with $a_n(f)= a$ (and since $f\in \Fc$ we 
must have $(n,Np)=1$) then we may write $n=m m^{\prime} m^{\prime\prime}$ with $m$ square-free and containing all prime factors from $\Uc_f$, $m^{\prime}$ square-free with $h\le h(f)$ prime factors all from $\Nc_f$, and with $m^{\prime\prime}$ square-full  
and coprime to $mm^{\prime}$.   Such a decomposition of the number $n$ is unique, and if we write $f''=T_{m''}f$ and $f' = T_{m'} f''$ then $f'$ and $f''$ are forms in $Af -\{0\}$ with $a_m(f')=a$.  Thus integers $n$ with $a_n(f)=a$ define uniquely triples $(f',f'',h)$ and we may decompose 
\begin{equation}
 \label{Zfapart}  Z(f,a) = \coprod_{f',f'',h} Z(f,a;f',f'',h), 
 \end{equation}
where the disjoint union is taken over forms $f'$, $f'' $ in $Af-\{0\}$ and integers $0\le h\le h(f)$.   Here the set $Z(f,a;f',f'',h)$ is 
defined as the set of integers $n=mm'm''$ with $(n,Np)=1$ such that   
\begin{num} \label{Zsn1} $m$ is square-free and all its prime factors are in $\Uc_f$; \end{num} 
\begin{num} \label{Zsn2} $m'$ is square-free, has exactly $h$ prime factors, and all its prime factors are in $\Nc_f$, and 
moreover $f'=T_{m'} f''$; \end{num} 
\begin{num} \label{Zsn3} $m''$ is square-full, relatively prime to $mm'$, and $f''=T_{m''}f$; \end{num}
\begin{num} \label{Zsn4} $a_m(f') = a$. 
\end{num}



 
Next we evaluate the number of elements up to $x$ in the set $Z(f,a;f',f'',h)$ using Theorem~\ref{delangemethod}.
Write $\Sc_{f,f''}$ for the set of square-full integers $m''$ such that $T_{m''} f = f''$, and $\Mc_{f',f''}$ for the set of integers $m'$ that are the product of $h$ distinct primes in 
$\Nc_f$ and such that $f'=T_{m'}  f''$. By Proposition~\ref{multifrob}, $\Mc_{f',f''}$ is a multi-frobenian set of height $h$. 
Observe that conditions \ref{Zsn1}, \ref{Zsn2}, \ref{Zsn3} are the same as the conditions  \ref{Zcn1}, \ref{Zcn2}, \ref{Zcn3} defining the set $\Zc(\Uc_f,\Mc_{f',f''},\Sc_{f,f''})$. Now, define a map $\tau_f: \Uc_f \rightarrow A_f^\ast$ sending $\ell$ to $t_f(\Frob_\ell) = T_\ell$ and extend it by multiplicativity to the set of all square-free integers composed only of primes from $\Uc_f$.   Let $\Gamma_f$ be the image of $\tau_f$, which is a finite abelian subgroup of the finite group $A_f^*$, and let $\Delta_{f',a}$ denote the set of $\gamma \in \Gamma_f$ such that $a_1(\gamma f')=a$.   For $n =mm'm'' \in Z(f,a;f',f'',h)$ put $\tau_f(n) = \tau_f(m)$, and so the condition 
\ref{Zsn4} is the same as $\tau_f(n) \in \Delta_{f',a}$.   Thus we are in a position to apply Theorem \ref{delangemethod}, which yields, 
%
%
assuming that the sets $\Mc_{f',f''},\Sc_{f,f''}$ and $\Delta_{f',a}$ are all not empty, 
\begin{align} \label{densZfaffh}
\nonumber |\{n<x: n \in Z(f,a; f',f'',h)\} | 
&= | \{n<x: n\in \Zc(\Uc_f,\Mc_{f',f''},\Sc_{f,f''}), \tau(n) \in \Delta_{f',a}\} | \\ 
&\sim 
 c  \, \delta(\Mc_{f',f''})  \frac{|\Delta_{f',a}|}{|\Gamma_f|}\frac{x}{(\log x)^{\alpha(f)}}(\log \log x)^h, 
 \end{align}
where $c=c(f,f'')>0$ is a constant depending only on $\Uc_f$ and $\Sc_{f,f''}$ (thus only on $f$ and $f''$), and $\alpha(f)=1-\delta(\Uc_f)=\delta(\Nc_f)$ as defined in \S\ref{puredef}. If at least one of the sets $\Mc_{f',f''}$, $\Sc_{f,f''}$ or $\Delta_{f',a}$ is empty, then so is $Z(f,a;f',f'',h)$.

Using~(\ref{Zfapart}), one deduces that either all the $Z(f,a,f',f'',h)$ are empty for all permissible choices of 
 $(f',f'',h)$, in which case  $\pi(f,a,x)=0$ for all $x$,
or
\begin{eqnarray} \label{pifax} \pi(f,a,x) \sim c(f,a) \frac{x}{(\log x)^{\alpha(f)}}(\log \log x)^{h(f,a)}, \end{eqnarray}
where $h(f,a) \leq h(f)$ is the  largest integer $h \leq h(f)$ for which there exists $f',f'' \in Af-\{0\}$ such that $Z(f,a;f',f'',h)$ is not empty,
and
\begin{eqnarray}\label{cpfa} c(f,a) = \sum_{(f',f'',h(f,a))} c(f,f'') \delta(\Mc_{f',f''})  \frac{\#\Delta_{f',a}}{\#\Gamma_f}, \end{eqnarray}
the sum being  over those $f',f'' \in Af-\{0\}$ such that $Z(f,a;f',f'',h(f,a))$ is not empty.

We claim that the set $Z(f,a;f',f'',h(f))$ is not empty for some choice of $(f',f'') \in (Af-\{0\})^2$ and some $a \in \F^\ast$. To see this, 
take $m''=1$ and $f''=f=T_{m''} f$. By Proposition~\ref{distinct}, there exists an integer $m'$ with $h(f)$ distinct prime factors in $\Nc_f$
such that $T_{m'} f \neq 0$. Fix one such $m'$ and let $f'=T_{m'} f$. Proposition~\ref{squarefreenonzero} tells us that there exists a square-free integer 
$m$ such that $a_m(f') \neq 0$. Note that $h(f') =0$, hence $m$ has all its prime factors in $\Uc_f$. Define $a=a_m(f') \in \F^\ast$. Then the set $Z(f,a;f',f'',h(f))$ contains $n=mm'm''$ and is therefore not empty, which proves the claim.

Since $\pi(f,x)=\sum_{a \in \F^\ast} \pi(f,a,x)$, it follows from (\ref{pifax}) and the above claim that
$$
\pi(f,x) \sim c(f) \frac{x}{(\log x)^{\alpha(f)}}(\log \log x)^{h(f)}
$$
with 
$$c(f) = \sum_{\substack{a \in \F^\ast \\ h(f,a)=h(f)}} c(f,a).$$

\subsection{Proof of Theorem~\ref{main} when $f \in \Fc(N,\F)$ but $f$ is not necessarily pure}
\label{proofasymp2}
Let  $f = \sum_i f_i$ be the canonical decomposition (see \ref{decfpure})  of $f$ into pure forms.
By the preceding section, one has
$$\pi(f_i,x)  \sim c(f_i)  \frac{x}{(\log x)^{\alpha(f_i)}} (\log \log x)^{h(f_i)}.$$
Consider the indices $i$ such that $\alpha(f_i)$ is minimal (and by definition $\alpha(f_i)=\alpha(f)$) and among those select 
those indices with $h(f_i)$ maximal (and by definition $h(f_i)=h(f)$); let $I$ denote the set of such indices.  
We claim that
$$\pi(f,x) \sim c(f)   \frac{x}{(\log x)^{\alpha(f)}} (\log \log x)^{h(f)}, \text{ with }c(f)=\sum_{i \in I} c(f_i).$$
To prove the claim, first note that we can forget those $f_i$ with $i \not \in I$, 
because they have a negligible contribution compared to the asserted asymptotic 
(either the power of $\log \log x$ is smaller, or the power of $\log x$ is larger).   It remains to prove that for $i,j \in I$, $i \neq j$, one has
\begin{eqnarray} \label{pififj} \pi(f_i,f_j,x)
=o\left( \frac{x}{(\log x)^{\alpha(f)}} (\log \log x)^{h(f)}\right), \end{eqnarray}
where
$\pi(f_1,f_j,x) = |\{n \leq x, a_n(f_i) \neq 0, a_n(f_j) \neq 0\}|$.  
But if $n$ is such that $a_n(f_i) \neq 0$ and $a_n(f_j) \neq 0$, it has at most $h(f_i)+h(f_j)=2h(f)$ prime factors $\ell$ such that $\Frob_\ell \in N_{f_i} \cup N_{f_j}$.
Moreover, the two open sets $N_{f_i}$ and $N_{f_j}$ of $G_{\Q,Np}$ are not equal by definition of the decomposition into pure forms \ref{decfpure}. Therefore the measure $\alpha'$ of the open set $N_{f_i} \cup N_{f_j}$ is strictly greater than the common measure $\alpha(f)=\alpha(f_i)=\alpha(f_j)$ of $N_{f_i}$ and $N_{f_j}$.
Hence an application of Theorem~\ref{delangemethod} gives
$$
\pi(f_1,f_j,x) = O \left(  \frac{x}{(\log x)^{\alpha'}} (\log \log x)^{2 h(f)}\right),
$$
which implies $(\ref{pififj})$ since $\alpha'>\alpha(f)$.

\subsection{Proof of Theorem~\ref{main}: general case}

\label{pfasympgeneralcase}
Let $\Bc$ be the set of integers $m \geq 1$ all of whose prime factors divide $Np$. Note that the series $\sum_{m \in \Bc} \frac{1}{m}$ converges.
For $m \in \Bc$, we consider the following operators on $\F[[q]]$: 
$$
 U_m \Big(\sum a_n q^n\Big) = \sum a_{m n} q^n, \qquad \text{and} \qquad 
V_m \Big(\sum a_n q^n\Big) = \sum a_{n} q^{m n}.
$$ 
We also consider the operator $W$, defined by
$$
W \Big(\sum a_n q^n\Big) = \sum_{\substack{n \\(n,Np)=1}} a_n q^n.
$$
The operators $U_m$  stabilize the space $M(N,\F)$, see \S\ref{secHeckeonM}. The operator $V_m$ however does not stabilize $M(N,\F)$ (except for $m = p$, see \ref{Vp}), but it sends $M(N,\F)$
 into $M(Nm,\F)$ since it is the reduction mod $p$ of the action on $q$-expansions of the operator on modular forms $f(z) \mapsto f(m z)$.
 As for the operator $W$, it is easily seen from the definitions to satisfy
 $$W = \sum_{m \in \Bc} \mu(m) V_m U_m,$$ 
where $\mu(m)$ is the Möbius function. Since $\mu$ vanishes on non-square-free integers, the sum is in fact finite, and it follows that $W$ sends $M(N,\F)$ into $M(N^2,\F)$, and more precisely into $\Fc(N^2,\F)$.

Let $f= \sum a_n q^n \in M(N,\F)$ be a modular form. For any integer $m \in \Bc$, define
$$
f_m = \sum_{\substack{ n = m m' \\ { (m',Np)=1}}} a_n q^n,
$$
so that $f = a_0 +  \sum_{m \in \Bc} f_m$.  
This sum may genuinely be infinite, but it obviously converges in $\F[[q]]$.   Clearly 
$$
\pi(f,x) = \sum_{m \in \Bc} \pi(f_m,x) + O(1),
$$
where the error term $O(1)$ is just $0$ if $a_0 = 0$ and $1$ otherwise.
One sees from the definitions that $f_m = V_m W U_m f$, so that 
$$
\pi(f_m,x) = \pi(W U_m f, x/m).  
$$
Since $\pi(f_m,x)$ is clearly at most $x/m$, and as $\sum_{m \in \Bc, m> (\log x)^2} 1/m \ll 1/\log x$, we 
conclude that 
\begin{eqnarray} \label{pifxassum}
\pi(f,x) = \sum_{\substack{ m \in \Bc \\  m\le (\log x)^2}} \pi(WU_m f,x/m) + O\Big(\frac{x}{\log x}\Big).
\end{eqnarray}

Now $W U_m f  \in \Fc(N^2,\F)$, and we can apply the results of \S\ref{proofasymp2} and thus estimate $\pi(WU_m f,x/m)$.  Thus,   if 
$WU_m f \neq 0$, and $m\le (\log x)^2$ (so that $\log (x/m) \sim \log x$)  
\begin{eqnarray} \label{piUmf}
\pi(W U_m f, x/m) \sim c(W U_m f) \frac{x}{m(\log x)^{\alpha(W U_m f)}} (\log \log x)^{h(W U_m f)}.
 \end{eqnarray} 
 Note that since $f$ is not a constant, $W U_m f \neq 0$ for at least one $m \in \Bc$.
Further, note that while $\Bc$ is infinite, the set of forms $W U_m f$ for $m \in \Bc$ is finite since $U_m f$ belongs to the Hecke-module generated by $f$ which is finite-dimensional over $\F$ (see \ref{locfin}).  Thus the asymptotic formula \eqref{piUmf} holds uniformly for all $m\le (\log x)^2$ with $m\in \Bc$ and as $x\to \infty$.   Finally, since the Hecke operators $T_\ell$ for $\ell$ prime to $Np$ commute with the operators $U_m$, $V_m$ and $W$, it follows that  
$$
\alpha(f) = \min_{\substack{m \in \Bc \\  WU_m f \neq 0}} \alpha( W U_m f), 
\qquad \text{and} \qquad h(f)=\max_{\substack{ m \in \Bc \\   WU_m f \neq 0 \\ \alpha(W U_m f) = \alpha(f)}} h(W U_m f).
$$
Thus, putting $c_m = c(WU_mf)$ when $WU_mf\neq 0$ (which 
happens for at least one $m\in \Bc$) and putting $c_m=0$ otherwise, we may recast (\ref{piUmf}) as 
\begin{eqnarray} \label{piUmf2}
\pi(W U_m f, x/m) = (c_m + \epsilon_m(x)) \frac{x}{m(\log x)^{\alpha( f)}} (\log \log x)^{h(f)}, \end{eqnarray}
where $\epsilon_m(x)\to 0$ as $x \rightarrow \infty$, uniformly for all $m\in \Bc$ with $m\le (\log x)^2$.  


From (\ref{pifxassum}) and (\ref{piUmf2}) we obtain 
$$
\pi(f,x) \sim \sum_{\substack{ m\in \Bc \\ m <(\log x)^2}} \frac{c_m}{m}  \frac{x}{(\log x)^{\alpha( f)}} (\log \log x)^{h(f)} \sim c
\frac{x}{(\log x)^{\alpha(f)}} (\log \log x)^{h(f)},
$$
 with 
 \begin{eqnarray} \label{defccm} 
 c = \sum_{m \in \Bc} \frac{c_m}m, 
 \end{eqnarray} 
 noting that this series converges because $c_m$ takes only finitely many values (and hence is bounded).  This 
 finishes the proof of Theorem \ref{main}. 

 \section{Equidistribution}
 \label{equidis}
 \begin{definition} We say that a form $f \in M(\Gamma_1(N),\F)$ has the {\it equidistribution property} if for any two $a,b \in \F^\ast$, we have  $\pi(f,a,x) \sim \pi(f,b,x)$.
 We say that a subspace $V \subset M(\Gamma_1(N),\F)$ has the {\it equidistribution property} if every non-constant form $f \in V$ has the 
 equidistribution property.
 \end{definition}
 In view of Theorem~\ref{main}, to say that $f$ has the equidistribution property is equivalent to 
 $$
 \pi(f,a,x) \sim \frac{c(f)}{|\F|-1} \frac{x}{\log(x)^{\alpha(f)}} (\log \log x)^{h(f)},
 $$
 where $c(f)$ is the constant of Theorem~\ref{main}.
 
 We now give a sufficient condition for equidistribution for generalized eigenforms, which generalizes a similar criterion for true eigenforms due to Serre (\cite[Exercise 6.10]{SerreE}).
 
 \begin{prop} \label{equi1} Let $\rhob: G_{\Q,Np} \rightarrow \GL_2(\F)$ be a representation in $R(N,p)$. If the  set $ \tr \rhob(G_{\Q,Np}) - \{0\}$ generates $\F^\ast$ multiplicatively, then the generalized eigenspace $M(N,\F)_{\rhob}$
 has the equidistribution property. \end{prop}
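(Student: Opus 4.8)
The plan is to reduce the equidistribution property for the whole generalized eigenspace $M(N,\F)_\rhob$ to a statement about the Hecke action on a single form, exploiting the explicit description of $\pi(f,a,x)$ obtained during the proof of Theorem~\ref{main}. Fix a non-constant $f \in M(N,\F)_\rhob$. Since $\{\rhob\}$ is a singleton, $f$ is a generalized eigenform, hence in particular \emph{pure}, with $R(f) \subset \{\rhob\}$, $N_f = N_\rhob$, $U_f = U_\rhob$, and $A_f$ a finite local $\F$-algebra with residue field $\F$. Using the decomposition by $\Bc$-part and the formula $f = a_0 + \sum_{m \in \Bc} V_m W U_m f$ from \S\ref{pfasympgeneralcase}, and the fact that $V_m$ merely spreads out the $q$-expansion, it suffices to prove equidistribution for each non-zero $W U_m f \in \Fc(N^2,\F)_\rhob$; so we may assume at the outset that $f \in \Fc(N,\F)$ is pure.

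For such $f$, formula \eqref{cpfa} in \S\ref{proofasymppure} gives, for each $a \in \F^\ast$ with $h(f,a) = h(f)$,
$$
c(f,a) = \sum_{(f',f'',h(f))} c(f,f'') \, \delta(\Mc_{f',f''}) \, \frac{\#\Delta_{f',a}}{\#\Gamma_f},
$$
where $\Gamma_f = \tau_f(\text{square-free } \Uc_f\text{-integers})$ is the subgroup of $A_f^\ast$ generated by $\{T_\ell : \ell \in \Uc_f\} = \{t_f(\Frob_\ell) : \Frob_\ell \in U_\rhob\}$, and $\Delta_{f',a} = \{\gamma \in \Gamma_f : a_1(\gamma f') = a\}$. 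The key point is that $c(f,f'')$ and $\delta(\Mc_{f',f''})$ do not depend on $a$, so equidistribution will follow once we show that $\#\Delta_{f',a}$ is independent of $a \in \F^\ast$ for every relevant $f'$, \emph{and} that the index set of pairs $(f',f'')$ contributing at level $h(f)$ is the same for all $a$ (equivalently, that $h(f,a) = h(f)$ for all $a$). The first is the heart of the matter; I expect the second to follow from it, since once $\Delta_{f',a} \neq \emptyset$ for one $a$ it will be shown non-empty (of constant size) for all $a$.

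So the main obstacle is: given $f' \in A_f f - \{0\}$, show that $\gamma \mapsto a_1(\gamma f')$ takes each value of $\F^\ast$ on the same number of $\gamma \in \Gamma_f$. Consider the subgroup $H_{f'} = \{\gamma \in \Gamma_f : a_1(\gamma f') = a_1(f')\}$; one checks $\Delta_{f',a}$ is either empty or a coset of $H_{f'}$, so it is enough to show the map $\Gamma_f \to \F$, $\gamma \mapsto a_1(\gamma f')$, has image containing a translate of... rather, that its nonzero values are a coset of $H_{f'}$ covering all of $\F^\ast$ — i.e. that the image of $\Gamma_f$ acting on the line $\F f'$ (via $a_1(-)$, after renormalizing $f'$ so $a_1(f') = 1$, which we may do replacing $f'$ by $T_{m}f'$ for suitable $m$ as in \ref{cuspidaleigenform}) is all of $\F^\ast \cup \{0\}$ or at least all of $\F^\ast$. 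Here is where the hypothesis enters: $\Gamma_f$ surjects onto the subgroup of $(A_f/\mm_{A_f})^\ast = \F^\ast$ generated by the residues $\overline{T_\ell} = \overline{t_f(\Frob_\ell)}$; and since $A_f$ is local with residue field $\F$ and $\rhob$ is the reduction of the pseudo-representation, $\overline{t_f(g)} = \tr \rhob(g)$ for all $g \in G_{\Q,Np}$ (by \ref{Tellrhb} and density of Frobenii via Chebotarev). Thus the hypothesis that $\tr\rhob(G_{\Q,Np}) - \{0\}$ generates $\F^\ast$ says exactly that the image of $\Gamma_f$ in $\F^\ast$ is all of $\F^\ast$. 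Finally, since $A_f$ is local, any unit $u \in A_f^\ast$ reducing to $\bar u \in \F^\ast$ satisfies $a_1(u f') = \bar u \, a_1(f') + (\text{contribution from }\mm_{A_f})$; I will need that $a_1(\mm_{A_f} f') = 0$, or more robustly argue directly that for each target $\bar u$ there is $\gamma \in \Gamma_f$ with $a_1(\gamma f') = \bar u \cdot a_1(f')$ by lifting. Granting this, $\gamma \mapsto a_1(\gamma f')$ surjects onto $\F^\ast a_1(f') = \F^\ast$ with all fibers of equal size $\#H_{f'}$, giving $\#\Delta_{f',a} = \#H_{f'}$ independent of $a$; summing over $(f',f'')$ via \eqref{cpfa} yields $c(f,a)$ independent of $a$, hence the equidistribution property for $f$, and then for all of $M(N,\F)_\rhob$ by the reduction above.
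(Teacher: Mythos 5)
Your reduction to pure forms in $\Fc(N,\F)$ and your identification of the key claim --- that the cardinalities $\#\Delta_{f',a}$ should be independent of $a\in\F^\ast$ --- both match the paper. But the step where you try to close this out has a genuine gap, and you flag part of it yourself. You only establish that $\Gamma_f$ \emph{surjects} onto $\F^\ast = (A_f/\mm_{A_f})^\ast$; this is strictly weaker than what is needed, because the map $\gamma \mapsto a_1(\gamma f')$ does \emph{not} factor through the residue map $\Gamma_f \to \F^\ast$. Writing $\gamma = \bar\gamma + m$ with $m \in \mm_{A_f}$, one has $a_1(\gamma f') = \bar\gamma\, a_1(f') + a_1(m f')$, and the term $a_1(m f')$ is typically nonzero. (Concretely, for $f = \Delta^2 \bmod 3$ with $A_f = \F_3[\epsilon]$, one has $\epsilon f = \Delta$ and $a_1(\epsilon f) = 1 \neq 0$.) So the fallback you propose, ``$a_1(\mm_{A_f} f')=0$,'' is false, and the coset structure you invoke is not available: $H_{f'}=\{\gamma: a_1(\gamma f')=a_1(f')\}$ is not in general a subgroup of $\Gamma_f$, since $\gamma_1,\gamma_2\in H_{f'}$ gives $a_1(\gamma_1\gamma_2 f') = a_1(\gamma_1(\gamma_2 f'))$ which requires $\gamma_1 \in H_{\gamma_2 f'}$, not $\gamma_1 \in H_{f'}$.

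The missing idea is to prove the stronger statement that $\F^\ast$ is literally a subgroup of $\Gamma_f$ inside $A_f^\ast$, not merely a quotient. Once that is in hand, for $\lambda \in \F^\ast \subset \Gamma_f$ multiplication by $\lambda$ on $\Gamma_f$ sends $\Delta_{f',a}$ bijectively onto $\Delta_{f',\lambda a}$ because $a_1$ is $\F$-linear, and transitivity of the $\F^\ast$-action on $\F^\ast$ finishes the count. The inclusion $\F^\ast \subset \Gamma_f$ is where the hypothesis on $\tr\rhob$ and Lemma~\ref{tgp} are used: given $g$ with $\tr\rhob(g)\neq 0$, one has $t_f(g) \equiv \tr\rhob(g) \pmod{\mm_{A_f}}$ by \ref{Tellrhb}; choosing $n$ with $\mm_{A_f}^n=0$ and $q=|\F|$, Lemma~\ref{tgp} gives $t_f(g^{q^n}) = t_f(g)^{q^n} = \tr\rhob(g)^{q^n} = \tr\rhob(g) \in \F^\ast$ exactly (the $\mm_{A_f}$-error is killed by the $q^n$-th power since $q^n \ge n$ and the characteristic is $p$). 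Thus $\tr\rhob(g) \in t_f(G_{\Q,Np}) \cap A_f^\ast \subset \Gamma_f$, and by hypothesis these elements generate $\F^\ast$. Your outline omits precisely this Frobenius-power trick, which is the one place the pseudo-representation structure (rather than just its residual trace) is essential.
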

\begin{pf} 
First assume that $f \in \Fc(N,\F)_{\rhob}$. Since $f$ is pure, the asymptotic formula (\ref{pifax}) holds for $\pi(f,a,x)$, and 
to obtain equidistribution it remains to show that the constant $c(f,a)$ appearing there is independent of $a \in \F^\ast$. By formula (\ref{cpfa}), which gives the values of $c(f,a)$, it suffices to prove that the
cardinalities of the subsets $\Delta_{f',a}$ of $\Gamma_f$ are independent of $a \in \F^\ast$, for any given form $f' \in Af-\{0\}$. Recall that $\Gamma_f$ is the subgroup of $A_f^\ast$ generated by the elements $T_\ell = t_f(\Frob_\ell)$ for $\ell \in \Uc_f = \Uc_{\rhob}$, hence by Chebotarev and the definition of $\Uc_f$, the subgroup of $A_f^\ast$ generated by $t_f(G_{\Q,Np}) \cap A_f^\ast$; recall also that $\Delta_{f',a}$ is the set of elements $\gamma \in \Gamma_f$ such that $a_1(\gamma f')=a$. To prove that $|\Delta_{f',a}|$ is independent of $a$, it therefore
suffices to prove that $\Gamma_f$ contains the subgroup $\F^\ast$ of $A_f^\ast$, in which case multiplication by $ba^{-1}$ will induce a bijection between $\Delta_{f',a}$ and $\Delta_{f',b}$ for any $b \in \F^\ast$. Since by hypothesis $\tr \rhob(G_{\Q,Np})-\{0\}$ generates $\F^\ast$, it suffices to show that 
$\tr \rhob(G_{\Q,Np})-\{0\} \subset \Gamma_f$. For this,  let $g \in G_{\Q,Np}$, and assume that $\tr \rhob(g) \neq 0$. By~\ref{Tellrhb},
one has $t_f(g) \equiv \tr \rhob(g) \pmod{\mm_{A_f}}$ where $\mm_{A_f}$ is the maximal ideal of the finite local algebra $A_f$. Let $n$ be an integer such that $\mm_{A_f}^n=0$, and let $q$ be the cardinality of $\F$. Then by Lemma~\ref{tgp},
$$
t_f(g^{q^n})=t_f(g)^{q^n} \equiv (\tr \rhob(g))^{q^n} \pmod{\mm_{A_f}^n},
$$
so that, since $x \mapsto x^q$ induces the identity on $\F$,
$$t_f(g^{q^n}) = \tr \rhob(g).
$$ 
Hence $\tr \rhob(g) \in \Gamma_f$ and this completes the proof of the proposition for forms $f \in \Fc(N,\F)_{\rhob}$.

Now consider a general non-constant form $f \in \Mc(N,\F)_{\rhob}$. Mimicking the proof in \S\ref{pfasympgeneralcase}, one has 
$\pi(f,a,x)=\sum_{m \in \Bc, m\le (\log x)^2} \pi(W U_m f,a,x/m)+O(x/\log x)$ and the asymptotic obtained for $\pi(WU_m f,a,x/m)$ is independent of $a \in \F^\ast$ since $W U_m f \in \Fc(N^2,\F)_{\rhob}$ and by the result just established. 
This completes the proof. 
\end{pf}

Serre has given an example of an eigenform $f$ mod $p$ that does not have the equidistribution property: namely, 
the form $\Delta$ mod $7$, see \cite[Exercise 12]{SerreE}.  Here is a generalization.

\begin{prop} \label{eigenequi} Let $f$ be a non-constant {\it eigenform} in $\Fc(N,\F)_{\rhob}$.  If the  set $ \tr \rhob(G_{\Q,Np}) - \{0\}$ does not generate $\F^\ast$ multiplicatively,
then $f$ does not have the equidistribution property.
\end{prop}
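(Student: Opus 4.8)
The plan is to detect the failure of equidistribution through a non-trivial multiplicative character of $\F^\ast$ that kills $\Gamma_f$. I first normalise: an eigenform in $\Fc(N,\F)$ with $a_1=0$ must be $0$ (then $a_m(f)=a_1(T_mf)=0$ for $(m,Np)=1$, while $a_m(f)=0$ for $(m,Np)>1$ since $f\in\Fc$), so $a_1(f)\neq0$, and replacing $f$ by $a_1(f)^{-1}f$ — which merely permutes the values $a_n(f)$ and hence preserves both the hypothesis and the conclusion — we may assume $a_1(f)=1$. Then $Af=\F f$, so $A_f=\F$ and $t_f=\tr\rhob$ as maps $G_{\Q,Np}\to\F$; arguing as in the proof of Proposition~\ref{equi1}, $\Gamma_f$ is the subgroup of $\F^\ast$ generated by $\{t_f(\Frob_\ell):\ell\in\Uc_f\}$, which by Chebotarev equals $\langle\tr\rhob(G_{\Q,Np})-\{0\}\rangle$; by hypothesis this is a \emph{proper} subgroup of $\F^\ast$. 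Fix a non-trivial character $\chi\colon\F^\ast\to\C^\ast$ that is trivial on $\Gamma_f$, and extend it to $\F$ by $\chi(0)=0$, so that $\chi$ is completely multiplicative on $\F$; hence $n\mapsto\chi(a_n(f))$ is a multiplicative function of modulus $\le1$, and $\sum_{n\le x}\chi(a_n(f))=\sum_{a\in\F^\ast}\chi(a)\,\pi(f,a,x)$.

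The key point is that $\chi(a_p(f))$ equals $1$ for $p\in\Uc_f$ and $0$ otherwise: for $p\nmid Np$ one has $a_p(f)=\tr\rhob(\Frob_p)$, which lies in $\Gamma_f$ (hence has $\chi$-value $1$) precisely when $p\in\Uc_f$ and equals $0$ when $p\in\Nc_f$, while $a_p(f)=0$ for $p\mid Np$. Therefore, writing $L_\chi(s)=\sum_{n\ge1}\chi(a_n(f))n^{-s}=\prod_{p\nmid Np}\bigl(1+\textstyle\sum_{j\ge1}\chi(a_{p^j}(f))p^{-js}\bigr)$ for $\Re s>1$, one gets $\log L_\chi(s)=\sum_{p\in\Uc_f}p^{-s}+H_\chi(s)$, where $H_\chi$ (gathering the contributions of the $j\ge2$ Euler–factor terms) is holomorphic and bounded for $\Re s>\tfrac12$. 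Since $\Uc_f$ is a frobenian set of density $\delta(\Uc_f)=1-\alpha(f)$, the standard Chebotarev/Hecke-$L$-function argument (exactly as in the proof of Theorem~\ref{thm5}) expresses $\sum_{p\in\Uc_f}p^{-s}$ as $(1-\alpha(f))\log\zeta(s)$ plus a holomorphic, polynomially bounded function in a region $\Re s>1-c/\log(2+|t|)$; exponentiating gives $L_\chi(s)=\zeta(s)^{1-\alpha(f)}B_\chi(s)$ with $B_\chi=\exp(G_\chi)$ for a holomorphic $G_\chi$, so $B_\chi$ is holomorphic, polynomially bounded, and \emph{nowhere vanishing} in that region, in particular $B_\chi(1)\neq0$. (For the trivial character $\chi_0$, extended by $\chi_0(0)=0$, the same discussion recovers $\sum_{n\le x}\chi_0(a_n(f))=\pi(f,x)$ with the same exponent $1-\alpha(f)$, consistently with Theorem~\ref{main} since $h(f)=0$ for eigenforms.) Proposition~\ref{propsqfree} then applies to $a(n)=\chi(a_n(f))$ and yields
$$
\sum_{n\le x}\chi(a_n(f))\ \sim\ \frac{B_\chi(1)}{\Gamma(1-\alpha(f))}\,\frac{x}{(\log x)^{\alpha(f)}},\qquad\text{with } B_\chi(1)\neq0 .
$$

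Finally, suppose for contradiction that $f$ has the equidistribution property. By Theorem~\ref{main} (again using $h(f)=0$), $\pi(f,a,x)=\frac{c(f)}{|\F|-1}\frac{x}{(\log x)^{\alpha(f)}}(1+o(1))$ for every $a\in\F^\ast$, whence
$$
\sum_{n\le x}\chi(a_n(f))=\sum_{a\in\F^\ast}\chi(a)\,\pi(f,a,x)=\frac{c(f)}{|\F|-1}\,\frac{x}{(\log x)^{\alpha(f)}}\sum_{a\in\F^\ast}\chi(a)\;+\;o\!\Bigl(\frac{x}{(\log x)^{\alpha(f)}}\Bigr)=o\!\Bigl(\frac{x}{(\log x)^{\alpha(f)}}\Bigr),
$$
because $\chi$ is non-trivial. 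This contradicts the asymptotic just established (in which $B_\chi(1)\neq0$), and therefore $f$ does not have the equidistribution property.

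The main obstacle is the analytic step of the second paragraph: checking that $L_\chi(s)$ really factors as $\zeta(s)^{1-\alpha(f)}B_\chi(s)$ with $B_\chi$ satisfying the hypotheses of Proposition~\ref{propsqfree} and with $B_\chi(1)\neq0$. Once one knows that $\chi(a_p(f))$ is the indicator function of the frobenian set $\Uc_f$, this is a direct transcription of the Chebotarev/Hecke-$L$-function analysis already carried out for Theorem~\ref{thm5}, and the non-vanishing of $B_\chi(1)$ comes for free from $B_\chi$ being the exponential of a holomorphic function on the relevant region; the remaining manipulations are elementary. The hypothesis that $\tr\rhob(G_{\Q,Np})-\{0\}$ does not generate $\F^\ast$ is used exactly once, to guarantee the existence of the character $\chi$.
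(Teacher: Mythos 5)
Your proof is correct and takes a genuinely different route from the paper's. Both arguments share the initial reductions ($a_1=1$, $A_f=\F$, $\Gamma_f=\langle\tr\rhob(G_{\Q,Np})\setminus\{0\}\rangle$ a proper subgroup), but then diverge. The paper argues combinatorially: it notes that for square-free $n$ one has $a_n\in B\cup\{0\}$ with $B=\Gamma_f$, gets a lower bound $\sim c\,x/(\log x)^{\alpha(f)}$ for $\#\{n\le x:a_n\in B\}$ from the square-free count, observes that the complementary count $\#\{n\le x:a_n\in\F^*\setminus B\}$ is dominated by non-square-free integers, bounds that by $(\zeta(2)\zeta(3)/\zeta(6)-1)\,c\,x/(\log x)^{\alpha(f)}\approx 0.94\,c\,x/(\log x)^{\alpha(f)}$, and since $|B|\le|\F^*\setminus B|$ under the hypothesis, derives a numerical contradiction with equidistribution. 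You instead pick a non-trivial character $\chi$ of $\F^*$ trivial on $\Gamma_f$, form the Dirichlet series $L_\chi(s)=\sum\chi(a_n)n^{-s}$, factor it as $\zeta(s)^{1-\alpha(f)}B_\chi(s)$ via the Euler product (since $\chi(a_p)=\mathbbm{1}_{p\in\Uc_f}$), and apply Proposition~\ref{propsqfree} to produce a non-vanishing main term for $\sum_{n\le x}\chi(a_n)$, which equidistribution would force to be $o(x/(\log x)^{\alpha(f)})$ by orthogonality. Your approach is more systematic and yields slightly more — an explicit asymptotic for every twisted sum $\sum_{n\le x}\chi(a_n)$ — and runs the Selberg--Delange machinery that the paper already deployed in Theorem~\ref{thm5}; the paper's argument is more elementary and avoids re-checking the analytic hypotheses for a new Dirichlet series. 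Two minor analytic imprecisions in your write-up, neither fatal: the intermediate assertion that $\sum_{p\in\Uc_f}p^{-s}-(1-\alpha(f))\log\zeta(s)$ is ``polynomially bounded'' should read that it is $O(\log(2+|t|))$ in the zero-free region (so that $B_\chi=\exp(G_\chi)$ is polynomially bounded, which is what Proposition~\ref{propsqfree} needs); and the boundedness of $H_\chi$ holds in any half-plane $\Re s\ge 1/2+\varepsilon$, not all the way to $\Re s>1/2$, but this suffices for the contour used.
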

\begin{pf} Write $f=\sum_{n=1}^\infty a_n q^n$. 
Since $f$ is an eigenform for the $T_\ell$, $\ell \nmid Np$, and also is killed by the $U_\ell$ for $\ell \mid Np$ (because it is in $\Fc$),  the sequence $a_n$ is multiplicative and one has $a_\ell=0$ for $\ell \mid Np$,  and  $a_\ell = \tr \rhob(\Frob_\ell)$ for all $\ell \nmid Np$.  Also one has $a_1 \neq 0$ since $f$ is non-constant, and we may assume $a_1=1$.

Let $B$ be the proper subgroup of $\F^\ast$ generated by $\tr \rhob(G_{\Q,Np})-\{0\}$. By mutiplicativity $a_n \in B \cup \{0\}$ for all square-free integers $m$.  Since $a_n \neq 0$ for square-free $n$ exactly when $n$ is composed only of primes in $\Uc_f$, we see that 
\begin{equation} \label{lowerbound}
\sum_{\substack{ n\le x \\ a_n \in B }} 1 \ge \sum_{\substack{ n\le x \\ n\text{ square-free} \\ p|n \implies p\in \Uc_f }} 
1 \sim c\frac{x}{(\log x)^{\alpha(f)}},
\end{equation} 
for a suitable positive constant $c$.  Now if $f$ has the equidistribution property, then since $|B| \le |\F^* - B|$ 
for proper subgroups $B$ of $\F^*$, we must have 
$$ 
\sum_{\substack{ n\le x \\ a_n \in B }} 1 \le (1+o(1)) \sum_{\substack{ n\le x \\ \ \ \ \ a_n \in \F^* -B }} 1. 
$$
The right hand side above is at most the number of integers of the form $mr \le x$ where $1<m$ is square-full, 
and $r\le x/m$ is square-free with $(r,m)=1$ and $a_r \neq 0$.  Ignoring the condition that $(r,m) =1$, the number of 
such integers is (arguing as in \S \ref{pfasympgeneralcase})
$$ 
\le \sum_{\substack {1 <m \le x \\ m \text{ square-full} } } \sum_{\substack{ r\le x/m \\ r\text{ square-free}\\ p|r \implies p\in \Uc_f}} 1 \le \sum_{\substack {1 <m \le (\log  x)^2 \\ m \text{ square-full} } } \frac{x}{m} \frac{c+o(1)}{(\log x)^{\alpha(f)}} + \sum_{\substack { m>(\log x)^2 \\ m \text{ square-full}}} \frac{x}{m} ,
$$
which is at most 
\begin{align*}
(c+o(1)) \frac{x}{(\log x)^{\alpha(f)}} \sum_{ \substack{1<m \\ m\text{ square-full}}} \frac 1m 
&= (c+o(1)) \frac{x}{(\log x)^{\alpha(f)}} \Big( \frac{\zeta(2)\zeta(3)}{\zeta(6)} -1 \Big) \\
&= (0.9435\ldots c + o(1) ) \frac{x}{(\log x)^{\alpha(f)}}. 
\end{align*}
But this contradicts the lower bound \eqref{lowerbound}, completing our proof.  
\end{pf}

We can use the above result to give a converse to Proposition~\ref{equi1} when the level $N$ is 1.
\begin{prop} \label{recequi} Let $\rhob \in R(1,\F)$. The space $M(1,\F)_{\rhob}$ has the equidistribution property if and only if  the  set $ \tr \rhob(G_{\Q,p}) - \{0\}$ generates $\F^\ast$ multiplicatively.
\end{prop}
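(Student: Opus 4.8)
The strategy is to reduce the equivalence to the two results already proved for eigenforms and generalized eigenspaces. The "if" direction is immediate from Proposition~\ref{equi1}, which applies verbatim with $N=1$: if $\tr\rhob(G_{\Q,p})-\{0\}$ generates $\F^\ast$, then $M(1,\F)_{\rhob}$ has the equidistribution property. So the content is the "only if" direction: assuming $\tr\rhob(G_{\Q,p})-\{0\}$ does \emph{not} generate $\F^\ast$, we must exhibit a non-constant form in $M(1,\F)_{\rhob}$ failing equidistribution.

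The plan is to produce a non-constant \emph{eigenform} lying in $\Fc(1,\F)_{\rhob}$ and then invoke Proposition~\ref{eigenequi}, which says precisely that such an eigenform does not have the equidistribution property under our hypothesis. The existence of the required eigenform is exactly the content of \ref{cuspidaleigenform} (Ghitza's result): for any $\rhob \in R(1,p)$ there is a form $f = \sum_{n\ge 1} a_n q^n \in M_\rhob$ with $a_0=0$, $a_1=1$, which is an eigenform for all $T_\ell$ and $S_\ell$ with $\ell \nmid p$. Such an $f$ is automatically non-constant (since $a_1 = 1 \ne 0$). The only point needing care is that $f$ lies in $\Fc(1,\F)_{\rhob} = \Fc(1,\F) \cap M(1,\F)_\rhob$, i.e.\ that $U_p f = 0$; but when $N=1$ the only prime dividing $Np$ is $p$ itself, and an eigenform for \emph{all} Hecke operators with $a_0=0, a_1=1$ is the reduction of a classical normalized eigenform whose $q$-expansion is multiplicative, so $U_p f = \lambda_p f$ for some scalar $\lambda_p$. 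If $\lambda_p \ne 0$ one replaces $f$ by $f - \lambda_p V_p f \in M(1,\F)$ (using \ref{Vp}), which still lies in $M(1,\F)_\rhob$, is still non-constant, and now satisfies $U_p(f - \lambda_p V_p f) = \lambda_p f - \lambda_p f = 0$, so it lies in $\Fc(1,\F)_\rhob$; alternatively one argues directly that the $U_p$-eigenvalue of a $p$-stabilized form mod $p$ with $\rhob$ in the relevant component is nilpotent, hence zero on the eigenform, placing $f$ in $\Fc$ from the start. Either way we obtain a non-constant eigenform in $\Fc(1,\F)_\rhob$.

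With that eigenform $f$ in hand, Proposition~\ref{eigenequi} applies directly: since $\tr\rhob(G_{\Q,p})-\{0\}$ does not generate $\F^\ast$, the form $f$ does not have the equidistribution property. As $f \in M(1,\F)_\rhob$, the space $M(1,\F)_\rhob$ does not have the equidistribution property. This establishes the contrapositive of the "only if" direction and completes the proof.

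The main obstacle, such as it is, is the bookkeeping in the middle step: making sure the eigenform furnished by \ref{cuspidaleigenform} can be chosen (or adjusted) to lie in $\Fc$, i.e.\ to be killed by $U_p$, rather than merely being an eigenform for the prime-to-$p$ Hecke operators. The clean way to handle this is to note that for $\rhob \in R(1,p)$ the operator $U_p$ acts on the (finite-dimensional) generalized eigenspace, and the decomposition of $M(1,\F)_\rhob$ under $V_p$ and $U_p$ (the $\theta$-cycle / $U_p$-$V_p$ formalism, using \ref{Vp}) shows that the $U_p$-eigenvalue on any eigenform in a component attached to a representation $\rhob$ unramified at $p$ must be nilpotent, hence $0$; alternatively, the explicit subtraction $f \mapsto f - \lambda_p V_p f$ above avoids invoking any such structure theory. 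Everything else is a direct citation of results already in the paper.
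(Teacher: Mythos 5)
Your proposal follows the paper's proof exactly in outline: the ``if'' direction is Proposition~\ref{equi1}, and for ``only if'' one produces a non-constant eigenform in $\Fc(1,\F)_\rhob$ via \ref{cuspidaleigenform} and applies Proposition~\ref{eigenequi}.

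There is one soft spot in the middle step, where you need to move the Ghitza eigenform $f$ into $\Fc(1,\F)_\rhob$, i.e.\ kill $U_p f$. Recall that \ref{cuspidaleigenform} only asserts that $f$ is an eigenform for $T_\ell$ and $S_\ell$ with $\ell\neq p$; it says nothing about $U_p$. Your claim that $f$ is therefore ``the reduction of a classical normalized eigenform'' and hence satisfies $U_p f=\lambda_p f$ is not justified (eigenforms mod~$p$ for the away-from-$p$ Hecke operators need not be $U_p$-eigenforms), so the replacement $f\mapsto f-\lambda_p V_p f$ rests on an unestablished hypothesis. Your fallback claim --- that the $U_p$-eigenvalue on the component $M_\rhob$ is nilpotent hence zero --- is false in general (for ordinary $\rhob$ the operator $U_p$ acts invertibly on part of $M_\rhob$). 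The paper avoids both issues with a small but decisive change: it replaces $f$ by $f - V_p U_p f$, which satisfies $U_p(f - V_p U_p f)=U_p f - U_p V_p U_p f = 0$ unconditionally (using $U_p V_p=\mathrm{Id}$), is still a non-constant $T_\ell,S_\ell$-eigenform in $M(1,\F)_\rhob$ because $V_p U_p$ commutes with the away-from-$p$ operators and leaves $a_1$ untouched. Swapping $f-\lambda_p V_p f$ for $f-V_pU_pf$ closes the gap and reproduces the paper's argument.
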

\begin{pf}  By \ref{cuspidaleigenform}, $M(1,\F)_{\rhob}$ has an eigenform $f=\sum_{n=1}^\infty a_n q^n$ with $a_1=1$ for all the Hecke operators $T_\ell$ and $S_\ell$, $\ell \neq p$. Replacing $f$ by $f-V_pU_pf$ (see \ref{Vp}), we may assume that $f$ is an eigenform in $\Fc(1,\F)_\rhob$. 
If $M(1,\F)_{\rhob}$, hence $f$, has the equidistribution property, then by the preceding proposition  $ \tr \rhob(G_{\Q,p}) - \{0\}$ generates $\F^\ast$ multiplicatively.
\end{pf}

In the same spirit, but concerning forms that are not necessarily generalized eigenforms, one has the following 
partial result.  

 \begin{prop} If $2$ is a primitive root modulo $p$, then $M(N,\F_p)$ has the equidistribution property.
 \end{prop}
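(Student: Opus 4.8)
The plan is to reduce to the case of pure forms lying in $\Fc(N,\F_p)$, where the hypothesis that $2$ is a primitive root modulo $p$ gives us what we need for free. The key observation is that for \emph{any} pure form $f$ the group $\Gamma_f \subset A_f^\ast$ contains all the scalars $\F_p^\ast$. Indeed, by the Chebotarev density theorem there are primes $\ell \nmid Np$ whose Frobenius is trivial in the finite quotient $G_f$ of Lemma~\ref{finitequot}; for such an $\ell$, since $t_f$ factors through $G_f$, one has $T_\ell = t_f(\Frob_\ell) = t_f(1) = 2$ by property (iii) of the pseudo-representation $(t_f,d_f)$, and since $p$ is odd this element acts invertibly on $Af$, so $\ell \in \Uc_f$ and hence $2 = \tau_f(\ell) \in \Gamma_f$; as $2$ generates $\F_p^\ast$ it follows that $\F_p^\ast \subseteq \Gamma_f$. (In contrast with Proposition~\ref{equi1}, this uses nothing about $\rhob$.)

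With this in hand, I would first treat a pure form $f \in \Fc(N,\F_p)$, following the proof of Proposition~\ref{equi1}. The asymptotic~(\ref{pifax}) for $\pi(f,a,x)$ applies, and by~(\ref{densZfaffh}) and~(\ref{cpfa}) the only way $a$ enters the leading constant $c(f,a)$ or the exponent $h(f,a)$ is through the cardinalities of the sets $\Delta_{f',a} = \{\gamma \in \Gamma_f : a_1(\gamma f') = a\}$. Since $\F_p^\ast \subseteq \Gamma_f$ and $a_1$ is $\F_p$-linear, multiplication by $ba^{-1}$ carries $\Delta_{f',a}$ bijectively onto $\Delta_{f',b}$ for all $a,b\in\F_p^\ast$; hence $\#\Delta_{f',a}$ is independent of $a$, and $\Delta_{f',a}$ is non-empty for one $a\in\F_p^\ast$ exactly when it is for all. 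It follows that $h(f,a)$ and $c(f,a)$ do not depend on $a$; since $f$ is non-constant, $Z(f,a)\neq\emptyset$ for some and therefore every $a\in\F_p^\ast$, so $c(f,a)>0$, and $\pi(f,a,x)\sim\pi(f,b,x)$ for all $a,b\in\F_p^\ast$.

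For an arbitrary $f \in \Fc(N,\F_p)$ I would use its canonical decomposition $f = \sum_i f_i$ into pure forms~(\ref{decfpure}); each $f_i$ lies in $\Fc(N,\F_p)$ and is non-zero since the $M_\rhob$ are in direct sum. Arguing exactly as in \S\ref{proofasymp2}, for $a \in \F_p^\ast$,
$$
\pi(f,a,x) = \sum_i \pi(f_i,a,x) + O\Big(\sum_{i<j} \pi(f_i,f_j,x)\Big),
$$
because any $n$ with $a_n(f)=a\neq 0$ and $a_n(f_i)\neq 0$ for at least two indices $i$ is counted by some cross term $\pi(f_i,f_j,x) = \#\{n\le x: a_n(f_i)\neq 0,\ a_n(f_j)\neq 0\}$, and the estimate of \S\ref{proofasymp2} bounds the cross terms by $O\big(x(\log\log x)^{O(1)}/(\log x)^{\alpha^+}\big)$, where $\alpha^+ = \min_{i\neq j}\mu(N_{f_i}\cup N_{f_j})>\alpha(f)$, the strict inequality coming from the fact that the open sets $N_{f_i}$ are pairwise distinct. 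By the pure case, each $\pi(f_i,a,x)$ has an asymptotic with leading constant and exponents (of $\log x$ and $\log\log x$) independent of $a$, so collecting the dominant terms gives $\sum_i\pi(f_i,a,x)\sim C\,x(\log\log x)^{h^\ast}/(\log x)^{\alpha(f)}$ with $C>0$ and with $C$ and $h^\ast$ independent of $a$; since $\alpha^+>\alpha(f)$ this dominates the cross terms, and $\pi(f,a,x)\sim\pi(f,b,x)$.

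Finally, for a general non-constant $f \in M(N,\F_p)$ I would invoke the reduction of \S\ref{pfasympgeneralcase}: the computation there, carried out with the value $a$ attached, gives
$$
\pi(f,a,x) = \sum_{\substack{m\in\Bc\\ m\le(\log x)^2}} \pi(WU_mf,\,a,\,x/m) + O\Big(\frac{x}{\log x}\Big),
$$
and each $WU_mf$ lies in $\Fc(N^2,\F_p)$. By the previous step every $\pi(WU_mf,a,x/m)$ is asymptotically independent of $a$; since the forms $WU_mf$ for $m\in\Bc$ take only finitely many values, at least one of which is non-zero, summing and keeping the dominant contributions produces an asymptotic for $\pi(f,a,x)$ that does not depend on $a$, i.e. the equidistribution property. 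The real content is the one-line observation $t_f(1)=2$; the rest is a re-run of the bookkeeping of \S\ref{proofasymp2}--\S\ref{pfasympgeneralcase}, and the only point requiring care is confirming that the error and cross terms are of strictly smaller order than the main term, which always holds because the savings occur in the exponent of $\log x$ (bounded above by $3/4$ by Proposition~\ref{alpharho}).
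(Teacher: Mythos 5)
Your proof is correct and follows essentially the same route as the paper: reduce to pure forms in $\Fc$ via the canonical decomposition and the $W U_m$ operators, then observe that $\Gamma_f$ always contains $t_f(1)=2$ (so, under the hypothesis, all of $\F_p^\ast$), and conclude as in the proof of Proposition~\ref{equi1} that the sets $\Delta_{f',a}$ all have the same cardinality. The paper's proof is a three-line sketch invoking exactly the reductions you spell out; you have simply carried out the bookkeeping in detail, including the (correct) verification that the cross terms $\pi(f_i,f_j,x)$ and the $m>(\log x)^2$ tail are of strictly lower order.
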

 \begin{proof} 
 One reduces to the case of an $f \in \Fc(N,p)$ pure exactly as in \S\ref{proofasymp2}. Then, arguing as in the proof of Proposition~\ref{equi1}, 
 it suffices to prove that the group $\Gamma_f$ generated by $t_f(G_{\Q,Np})$
 contains $\F_p^\ast$. But $\Gamma_f$ contains $t_f(1)=2$ which by hypothesis generates $\F_p^\ast$.
 \end{proof}
 Again, one has a partial converse to this proposition.
 \begin{prop} In the case $N=1$ and $p \equiv 3 \pmod{4}$, $M(1,\F_p)$ has the equidistribution property if and only if $2$ is a primitive root modulo $p$.
 \end{prop}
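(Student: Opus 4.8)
The ``if'' implication is immediate from the previous proposition: if $2$ is a primitive root modulo $p$, then $M(N,\F_p)$ has the equidistribution property for every $N$, in particular for $N=1$. So the plan is to prove the contrapositive of the ``only if'' implication: assuming $p\equiv 3\pmod 4$ and that $2$ is \emph{not} a primitive root modulo $p$, I will exhibit one non-constant form in $M(1,\F_p)$ lacking the equidistribution property. We may assume $p\ge 7$: the condition $p\equiv 3\pmod 4$ rules out $p=2,5$, and $2$ is a primitive root modulo $3$, so the statement is vacuous when $p=3$. (For $p=7$ the form produced below is, up to its behaviour at $p$, a twist of Serre's example $\Delta$ mod $7$.)

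The form will be an eigenform attached to the reducible representation $\rhob=\un\oplus\epsilon$, where $\epsilon\colon G_{\Q,p}\to\{\pm1\}\subset\F_p^\ast$ is the nontrivial quadratic character, so that $\epsilon(\Frob_\ell)=\ell^{(p-1)/2}\bmod p$ for $\ell\neq p$. This $\rhob$ is semi-simple, two-dimensional, defined over $\F_p$, and unramified outside $p$ (Serre conductor $1$); and because $p\equiv 3\pmod 4$ one has $\epsilon(c)=(-1)^{(p-1)/2}=-1$, so $\tr\rhob(c)=0$ and $\rhob$ is odd. The first step is to check that $\rhob\in R(1,p)$. The cleanest route is to quote Serre's conjecture (Khare--Wintenberger), recalled in \S\ref{residual}, which attaches to the odd semi-simple representation $\rhob$ an eigenform in $M(1,\bar\F_p)$. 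Alternatively, when $p$ does not divide the numerator of the Bernoulli number $B_{(p+1)/2}$, one may reduce the Eisenstein series $E_{(p+1)/2}$ modulo $p$ (its $T_\ell$-eigenvalue is $1+\ell^{(p-1)/2}$) and apply the Eichler--Shimura relation \ref{es}, invoking Ribet's level-$1$ Eisenstein congruence in the rare cases where $E_{(p+1)/2}$ is not $p$-integral.

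Granting $\rhob\in R(1,p)$, I will next build a non-constant eigenform in $\Fc(1,\F_p)_\rhob$. By \ref{cuspidaleigenform} there is an eigenform $h=\sum_{n\ge1}a_nq^n\in M(1,\bar\F_p)_\rhob$ with $a_0=0$ and $a_1=1$. Put $f:=h-V_pU_ph$. Using that $T_\ell$ and $S_\ell$ for $\ell\neq p$ commute with $U_p$ and $V_p$, and that $U_pV_p=\Id$, one checks directly that $f$ remains an eigenform for the $T_\ell$, $S_\ell$ ($\ell\neq p$) with the same eigenvalues as $h$, that $U_pf=0$, and that $a_1(f)=1$; hence $f$ is a non-constant element of $\Fc(1,\bar\F_p)_\rhob$. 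Since the eigenvalues $\tr\rhob(\Frob_\ell)=1+\ell^{(p-1)/2}$ and $\det\rhob(\Frob_\ell)=\ell^{(p-1)/2}$ lie in $\F_p$, all coefficients of $f$ lie in $\F_p$ (those indexed by $n$ with $p\nmid n$ are values of the associated multiplicative function, those with $p\mid n$ vanish), so in fact $f\in\Fc(1,\F_p)_\rhob$.

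Finally, since $\epsilon$ is surjective onto $\{\pm1\}$ we have $\tr\rhob(G_{\Q,p})=\{1+\epsilon(g):g\in G_{\Q,p}\}=\{0,2\}$, hence $\tr\rhob(G_{\Q,p})-\{0\}=\{2\}$, which does not generate $\F_p^\ast$ precisely because $2$ is not a primitive root modulo $p$. Proposition~\ref{eigenequi}, whose proof is valid over any finite coefficient field and in particular over $\F_p$, then shows that $f$ does not have the equidistribution property; hence neither does $M(1,\F_p)$. The step I expect to be the main obstacle is the first one, verifying $\rhob=\un\oplus\epsilon\in R(1,p)$: for irregular primes the naive candidate $E_{(p+1)/2}$ need not be $p$-integral, which is exactly why one appeals to Serre's conjecture (or to Ribet's theorem) rather than to a bare Eisenstein series. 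The remaining steps --- the commutation identities for $f$, the $\F_p$-rationality of its coefficients, and the trace computation --- are routine.
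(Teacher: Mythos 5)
Your proof is correct and follows essentially the same path as the paper's: same representation $\rhob=\un\oplus\omega_p^{(p-1)/2}$ (your $\epsilon$), same construction of the eigenform in $\Fc(1,\F_p)_\rhob$ via \ref{cuspidaleigenform} and $f=h-V_pU_ph$, same trace computation $\tr\rhob(G_{\Q,p})-\{0\}=\{2\}$, and same appeal to Proposition~\ref{eigenequi}. The one place you are more cautious than necessary is in establishing $\rhob\in R(1,p)$: the normalization $G_k=-\tfrac{B_k}{2k}+\sum_{n\ge1}\sigma_{k-1}(n)q^n$ with $k=(p+1)/2$ is always $p$-integral when $p-1\nmid k$ (von Staudt--Clausen controls only the denominator), and its reduction is visibly non-zero since $a_1=1$, so the Eisenstein route works unconditionally and Serre's conjecture is not needed as a fallback.
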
 
 \begin{proof} Let $\omega_p : G_{\Q,p} \rightarrow \F_p^\ast$ be the cyclotomic character modulo $p$, and let $\rhob = 1 \oplus \omega_p^{(p-1)/2}$.
 The hypothesis $p \equiv 3 \pmod{4}$ means that $(p-1)/2$ is odd, and so $\rhob$ is odd and thus belongs to $R(1,p)$ ($\rhob$ is the representation attached to
 the Eisenstein series $E_k(z)$ where $k=1+(p-1)/2$ for $p>3$ and to $E_4(z)$ if $p=3$). Reasoning as in Proposition~\ref{recequi}, there is an eigenform $f$ in $\Fc(1,p)_{\rhob}$. If $M(1,p)$, hence $f$, has the equidistribution property, $\rhob(G_{\Q,p})-\{0\}$ generates $\F_p^\ast$ by Proposition~\ref{eigenequi}.
 Since the image of $\rhob$ is $\{0,2\}$, this implies that $2$ is a primitive root modulo $p$.
 \end{proof}
 
 \section{A variant: counting square-free integers with non-zero coefficients}
 \label{sectionvariant}
 
Given a modular form $f = \sum_{n=0}^\infty a_n q^n$ in $M(N,p)$, let 
$$
\pi_\sf(f,x) = |\{ n < x, n \text{ square-free}, a_n \neq 0\}|.
$$ 
Our proof of Theorem 1 allows us to get asymptotics for $\pi_{\sf}(f,x)$, and indeed this is a little simpler than Theorem 1.  
We state this asymptotic, and sketch the changes to our proof omitting details.  

 \begin{theorem} \label{mainvariant} If there exists a square-free integer $n$ with $a_n\neq 0$, then there exists a positive 
 real constant $c_\sf(f) >0$ such that
 $$
 \pi_\sf(f,x) \sim c_\sf(f) \frac{x}{(\log x)^{\alpha(f)}} (\log \log x)^{h(f)}.
 $$
 If $a_n=0$ for all square-free integers $n$, then in fact $a_n \neq 0$ only for those integers $n$ that are divisible by $\ell^2$ for some 
 prime $\ell$ dividing $Np$.  
 \end{theorem}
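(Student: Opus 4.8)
The plan is to imitate the proof of Theorem~\ref{main} almost verbatim, noting two simplifications that make it shorter: only square-free integers are counted, so the square-full factor $m''$ is always $1$ (equivalently, one takes $\Sc=\{1\}$ throughout), and the infinite set $\Bc$ of integers composed of primes dividing $Np$ is replaced by its \emph{finite} subset of square-free elements, i.e.\ the divisors of $\prod_{\ell\mid Np}\ell$.

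I would dispose of the second assertion first. Assume $a_n=0$ for every square-free $n$, and suppose $a_n\neq0$ for some $n\ge1$. Write $n=m_0 n'$ with $m_0$ the largest divisor of $n$ all of whose prime factors divide $Np$, so $(n',Np)=1$. The decomposition $f=a_0+\sum_{m\in\Bc}V_m W U_m f$ of \S\ref{pfasympgeneralcase}, whose summands have pairwise disjoint supports, gives $a_n(f)=a_{n'}(W U_{m_0} f)$; in particular $W U_{m_0} f\neq0$. Since $W U_{m_0} f\in\Fc(N^2,\F)$, Proposition~\ref{squarefreenonzero} yields a square-free $n''$ (automatically coprime to $Np$, hence to $m_0$) with $a_{n''}(W U_{m_0} f)\neq0$. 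Were $m_0$ square-free, then $m_0 n''$ would be a square-free integer with $a_{m_0 n''}(f)=a_{n''}(W U_{m_0} f)\neq0$, contradicting the hypothesis; hence $\ell^2\mid m_0\mid n$ for some prime $\ell\mid Np$.

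For the first assertion I would begin with $f\in\Fc(N,\F)$ pure. Fix $a\in\F^*$; a square-free $n$ with $a_n(f)=a$ factors uniquely as $n=mm'$ with $m$ square-free composed of primes in $\Uc_f$ and $m'$ square-free with $h\le h(f)$ prime factors, all in $\Nc_f$. Setting $f'=T_{m'}f\in Af\setminus\{0\}$, the conditions on $n$ become $m'\in\Mc_{f',f}$ (multi-frobenian of height $h$ by Proposition~\ref{multifrob}) and $\tau_f(n)\in\Delta_{f',a}$, exactly as in \S\ref{proofasymppure}. Theorem~\ref{delangemethod} with $\Sc=\{1\}$ then gives, for each admissible $(f',h)$, the contribution $\sim c\,\delta(\Mc_{f',f})\,\frac{|\Delta_{f',a}|}{|\Gamma_f|}\,\frac{x}{(\log x)^{\alpha(f)}}(\log\log x)^h$, whence $\pi_\sf(f,a,x)\sim c_\sf(f,a)\,\frac{x}{(\log x)^{\alpha(f)}}(\log\log x)^{h_\sf(f,a)}$ for some $h_\sf(f,a)\le h(f)$. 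To see $\max_a h_\sf(f,a)=h(f)$ — which then yields the stated asymptotic with $c_\sf(f)>0$ after summing over $a\in\F^*$ — I would invoke Proposition~\ref{distinct} for \emph{distinct} primes $\ell_1,\dots,\ell_{h(f)}\in\Nc_f$ with $f':=T_{\ell_1}\cdots T_{\ell_{h(f)}}f\neq0$; maximality of $h(f)$ forces $T_\ell f'=0$ for all $\ell\in\Nc_f$, so the square-free $m$ with $a_m(f')\neq0$ supplied by Proposition~\ref{squarefreenonzero} has all its prime factors in $\Uc_f$, and then $n=m\,\ell_1\cdots\ell_{h(f)}$ is square-free with $a_n(f)=a_m(f')\in\F^*$.

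For general $f$ I would first handle $f\in\Fc(N,\F)$ not pure through its canonical decomposition $f=\sum_i f_i$: exactly as in \S\ref{proofasymp2} (the error estimates only improve since $\pi_\sf\le\pi$), the off-diagonal overlaps have measure-theoretic exponent strictly larger than $\alpha(f)$, so $\pi_\sf(f,x)\sim\sum_{i\in I}\pi_\sf(f_i,x)$, where $I$ indexes the $f_i$ with $\alpha(f_i)=\alpha(f)$ and $h(f_i)=h(f)$. Finally, for arbitrary $f\in M(N,\F)$ the decomposition $f=a_0+\sum_{m\in\Bc}V_m W U_m f$ gives, since $mn'$ with $(n',Np)=1$ is square-free iff both $m$ and $n'$ are, the \emph{finite} sum $\pi_\sf(f,x)=\sum_{m\in\Bc,\; m \text{ square-free}}\pi_\sf(W U_m f,x/m)+O(1)$; each $W U_m f\in\Fc(N^2,\F)$, so the previous cases apply, and since $T_\ell$ ($\ell\nmid Np$) commutes with $U_m$, $V_m$ and $W$, the dominant exponents assemble into $\alpha(f)$ and $h(f)$ exactly as in \S\ref{pfasympgeneralcase}, with $c_\sf(f)$ a finite sum of positive constants which is non-empty precisely under our hypothesis (by the second assertion), so $c_\sf(f)>0$. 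The one step deserving genuine care is this last assembly of invariants — checking that the square-free coefficients already detect the constituents of $f$ of minimal $\alpha$ — which is where the commutation relations, together with Propositions~\ref{distinct} and~\ref{squarefreenonzero} guaranteeing that the frobenian input to Theorem~\ref{delangemethod} is non-degenerate, do the work.
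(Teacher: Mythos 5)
Your proposal follows the paper's sketch for Theorem~\ref{mainvariant} essentially step for step: take $\Sc=\{1\}$ in the application of Theorem~\ref{delangemethod}, reduce from $M(N,\F)$ to $\Fc(N^2,\F)$ by replacing the infinite index set $\Bc$ with the finite set $\Bc_\sf$ of square-free elements, and reuse \S\ref{proofasymppure} and \S\ref{proofasymp2} verbatim for the pure and non-pure cases. Your direct proof of the second assertion (applying Proposition~\ref{squarefreenonzero} to $W U_{m_0}f$ and using the disjointness of supports in $f=a_0+\sum_{m}V_mWU_mf$) is nicely explicit; the paper gets the same conclusion implicitly by working throughout under the equivalent hypothesis that $a_n\neq0$ for some $n$ not divisible by the square of any prime dividing $Np$, and then observing that, under this hypothesis, $\pi_\sf(f,x)\to\infty$.

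The one step you yourself flag as ``deserving genuine care'' --- that the exponents coming from $\{WU_mf:m\in\Bc_\sf\}$ assemble into $\alpha(f)$ and $h(f)$ --- is, however, not actually supplied by the tools you cite, and the paper's own sketch (``the rest of the proof is exactly as in \S\ref{pfasympgeneralcase}'') elides it too. In \S\ref{pfasympgeneralcase} the identity $\alpha(f)=\min_{m\in\Bc,\,WU_mf\neq0}\alpha(WU_mf)$ (and the analogue for $h$) holds because $f-a_0=\sum_{m\in\Bc}V_mWU_mf$, so \emph{every} $\rhob\in R(f)$ lies in $R(WU_mf)$ for some $m\in\Bc$. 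Once $m$ is restricted to the square-free set $\Bc_\sf$ this is no longer automatic: a pure constituent $f_\rhob$ of $f$ lying in the image of $V_{\ell^2}$ for a prime $\ell\mid N$ (for instance $f_\rhob=V_{\ell^2}g$ with $g\in\Fc(N/\ell^2,\F)$ nonzero and $\ell^2\mid N$) has all of its nonzero coefficients at indices divisible by $\ell^2$, hence $WU_mf_\rhob=0$ for every square-free $m$, so $\rhob$ is invisible to the sum over $\Bc_\sf$. If such a $\rhob$ alone realizes $\alpha(f)$ (or, at the critical $\alpha$, realizes $h(f)$), the square-free count $\pi_\sf(f,x)$ would be governed by a strictly larger exponent than $\alpha(f)$, and the asserted asymptotic cannot hold as written. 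The commutation relations give only $R(WU_mf)\subseteq R(f)$, and Propositions~\ref{distinct} and~\ref{squarefreenonzero} give nonemptiness for the pure pieces that \emph{are} detected; neither rules out this loss. You should either verify that no constituent realizing $\alpha(f)$, $h(f)$ can be ``entirely square-full at $Np$'' in this way, or read the theorem with $\alpha$ and $h$ recomputed from $\sum_{m\in\Bc_\sf}WU_mf$; as it stands, your proposal inherits the same elision as the paper's sketch.
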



Suppose below that $f$ has some coefficient $a_n\neq 0$ with $n$ not divisible by the square of any prime dividing $Np$.  
 We first prove Theorem~\ref{mainvariant} for a pure form $f \in \Fc(N,p)$, as in \S\ref{proofasymppure}. In this case, our hypothesis on $f$ is equivalent to saying that $f$ is 
 non-constant. Then the proof given in \S\ref{proofasymppure} works  by replacing the sets $Z(f)$, $Z(f,a)$ by their intersection $Z_\sf(f)$, $Z_\sf(f,a)$
 with the set of square-free integers.  We have a  decomposition, analogous to~(\ref{Zfapart}) but simpler:  
\begin{equation}
 \label{Zfapartsf}  Z_\sf(f,a) = \coprod_{f',h} Z_\sf(f,a;f',h), 
 \end{equation}
where the disjoint union is taken over forms $f'$ in $Af-\{0\}$ and integers $0\le h\le h(f)$.   Here the set $Z_\sf(f,a;f',h)$ is 
defined as the set of integers $n=mm'$ with $(n,Np)=1$ such that   
\begin{num} \label{Zsnsf1} $m$ is square-free and all its prime factors are in $\Uc_f$; \end{num} 
\begin{num} \label{Zsnsf2} $m'$ is square-free, has exactly $h$ prime factors, and all its prime factors are in $\Nc_f$, and 
moreover $f'=T_{m'} f$; \end{num} 
\begin{num} \label{Zsnsf4} $a_m(f') = a$. 
\end{num}
The asymptotics for the number of integers $<x$ in $Z(f,a;f',h)$ is then exactly as in \S\ref{proofasymppure}, except that the set of square-full integers 
$\Sc_{f,f''}$ is now $\{1\}$. The desired asymptotics for $\pi_\sf(f)$ follows. 

The case where $f$ is in $\Fc(N,\F)$ but not necessarily pure is reduced to the pure case exactly as in~\S\ref{proofasymp2}.

Finally, in the general case where $f \in M(N,\F)$, let $\Bc_\sf$ be the set of {\it square-free integers $m$} whose prime factors all divide $Np$.
We observe that $\Bc_\sf$ is a finite subset of the infinite set $\Bc$ defined in \S\ref{pfasympgeneralcase}. For $m \in \Bc_\sf$, we define 
as in \S\ref{pfasympgeneralcase} $$
f_m = \sum_{\substack{ n = m m' \\ { (m',Np)=1}}} a_n q^n,
$$
and we have clearly
$$
\pi_\sf(f,x) = \sum_{m \in \Bc_\sf} \pi_\sf(f_m,x)
$$
By the assumption made on $f$, one of the $f_m$ for $m \in \Bc_\sf$ at least is non-constant. The rest of the proof is therefore exactly as in \S\ref{pfasympgeneralcase}.
 
 \section{Examples}
 
 \label{sectionexamples}
 
 \subsection{Examples in the case $N=1$, $p=3$}

 The simplest case where our theory applies is $N=1$, $p=3$. Let us denote by $\Delta = q + 2q^4+q^7+q^{13}+\dots \in \F_3[[q]]$ 
 the reduction mod 3 of the $q$-expansion of the usual $\Delta$ function. The space $M(1,\F_3)$ is the polynomial
 algebra in one variable $\F_3[ \Delta]$ and $\Fc(1,\F_3)$ is the subspace of basis $(\Delta^k)$ where $k$ runs among positive integers not divsible by $3$.
The set of Galois representations $R(1,\F_3)$ has only one element, $\rhob = 1 \oplus \omega_3$ where $\omega_3$ is the cyclotomic
 character mod $3$. Hence every non-zero form $f \in M(1,\F_3)$ is a generalized eigenform, and hence pure.  Thus 
 the sets $\Uc_f$, $\Nc_f$ are independent of $f$, and are respectively the set $\Uc$, $\Nc$ of prime numbers $\ell$ congruent to $1$, $2$ modulo $3$; and the invariant $\alpha(f)$ is 1/2.

 The invariant $h(f)$ is more subtle. Recall from \S\ref{puredef} that $h(f)$ is the largest integer $h$ such that there exists primes $\ell_1,\dots,\ell_h$ in 
 $\Nc_f$ (that is, congruent to  $2$ mod $3$) such that $T_{\ell_1} \dots T_{\ell_h} f \neq 0$. According to a result of Anna Medvedowski (see \cite{med}) 
$h(f)$ is also the largest $h$ such that $T_2^h f \neq 0$. Using this it is easy to compute the value of $h(\Delta^k)$ for small values of $k$, as shown below
(we omit the values of $k$ divisible by $3$ since $h(\Delta^{3k})=h(\Delta^k)$):
\par \medskip
\begin{tabular}{c||c|c|c|c|c|c|c|c|c|c|c|c|c}
$f$ & $\Delta$ & $\Delta^2$ & $\Delta^4$ & $\Delta^5$ & $\Delta^7$ & $\Delta^8$ & $\Delta^{10}$ & $\Delta^{11}$ & $\Delta^{13}$ & $\Delta^{14}$ & 
$\Delta^{16}$ & $\Delta^{17}$ & $\Delta^{19}$ \\
\hline
$h(f)$ & 0 & 1 & 2 & 3 & 4 & 5 & 4 & 5 & 4 & 5 & 4 & 5 & 6
\end{tabular}
\par \medskip
In general Medvedowski has shown ({\it loc. cit.}) that $h(\Delta^k) < 4 k^{\log 2/\log 3}$.  Numerical experiments suggest that perhaps $h(\Delta^k)$ is of the order $\sqrt{k}$ for large $k$ with $3\nmid k$, so there is perhaps some room to improve this upper bound (note ${\log 2/\log 3} \approx 0.63$).
 
 \par \bigskip
 
\subsubsection{Calculation of $\pi(\Delta^2,x)$} 

The invariant $c(f)$ is the most difficult to determine. We shall calculate $c(\Delta^2)$, illustrating the proof of our theorem in this simplest non-trivial case. To ease notations, set $f=\Delta^2$. The Hecke module $Af$ is a two-dimensional vector space 
generated by $f=\Delta^2$ and $\Delta$, and the Hecke algebra $A_f$ can be identified with the algebra of dual numbers $\F_3[\epsilon]$, where
$\epsilon \Delta^2=\Delta$ and $\epsilon \Delta=0$. The value of the operators $T_\ell$ and $\ell S_\ell$ in $A_f=\F_3[\epsilon]$ is given by the following table (cf. \cite[\S A.3.1]{BK}):
\par \medskip
\begin{tabular}{c || c | c | c | c} 
$\ell \pmod{9}$ & 1,4,7 & 2 & 5 & 8 \\
\hline 
$T_\ell$  & 2 & $\epsilon$ & $2 \epsilon$ & 0 \\
$\ell S_\ell$  & 1 & $- 1$ & $-1$ & $-1$
\end{tabular}
\par \medskip
\noindent From this, using \ref{Tnmult}, it is not difficult to compute $T_{\ell^n}$ for any $n$:
\par \medskip
\begin{tabular}{c || c|c|c|c|c|c|c|c|c|c|c|c|c} 
$\ell \pmod{9}$ & \multicolumn{3}{c|}{1,4,7} & \multicolumn{4}{c|}{2} & \multicolumn{4}{c|}{5}  & \multicolumn{2}{c}{8} \\

$n \pmod 6$ & 0,3  & 1,4&  2,5 & 0,2,4 & 1 & 3 & 5 & 0,2,4 & 1 & 3 & 5 &  0,2,4 & 1,3,5  \\
  \hline 
$T_{\ell^n}$ & 1 & 2 & 0 & 1 & $\epsilon$ & $2 \epsilon$ &0 & 1 & $2 \epsilon$ & $\epsilon$ &0 & 1 & 0 \\
\end{tabular}
 \par \medskip
 We are now ready to follow the proof of Theorem~\ref{main}. Since $f \in \Fc(1,\F_3)$ and $f$ is pure,  only \S\ref{proofasymppure} is relevant. 
 As in our analysis there, write $f = \sum_{n \geq 1} a_n q^n$ and
 for $a =1,2 \pmod{3}$, let $Z(f,a)$ be the set of integers $n$ such that $a_n =a$. The set $Z(f,a)$ is the disjoint union of sets $Z(f,a;f',f'',h)$ as in (\ref{Zfapart}),
 where $f',f'' \in Af-\{0\}$ and $h \leq h(f)=1$ is a non-negative integer. The subsets with $h=0$ have negligible contribution in view of~(\ref{densZfaffh}).
 When $h=1$, for the set $Z(f,a;f',f'',1)$ to be non-empty one must have $h(f'') = 1$ and $h(f')=0$, and since $f''$ and $f'$ must be the image of $f$
 by some Hecke operators, this implies in view of the table above that $f''$  is either $2 \Delta^2$ or $\Delta^2$, and $f'$ is either $2 \Delta$ or
 $\Delta$, so we have 4 sets $Z(f,a; f',f'',1)$ to consider for each value $1,2$ of $a$. As explained in \S\ref{proofasymppure}, to each permissible choice of $f'$, $f''$ is
attached a set $\Sc_{f,f''}$ of square-full integers, namely the set of square-full $m''$ such that $T_{m''} f = f''$, and a multi-frobenian set of height 1, that is, a frobenian set, $\Mc_{f',f''}$, which is the set of primes $\ell$ in $\Nc_f$ such that $T_\ell f'' = f'$. For every choice of $f''$, $f'$, one sees from the table above that $\Mc_{f',f''}$ is either
the set of primes congruent to $2 \pmod 9$, or to $5\pmod 9$,  and in any case $\delta(\Mc_{f',f''})=1/6$. 
The sets $\Sc_{f,f''}$ may be easily determined using our table above.  Thus $\Sc_{\Delta^2,\Delta^2}$ 
consists of square-full numbers where primes $\equiv 2 \pmod 3$ appear to an even exponent, an even number of primes $\equiv 1\pmod 3$ appear to exponents that are at least $2$ and 
$\equiv 1$ or $4 \pmod 6$, and other primes $\equiv 1 \pmod 3$ appear to exponents that are multiples of $3$.  The set $\Sc_{\Delta^2,2\Delta^2}$ consists of square-full numbers that are divisible by an odd number of primes $\equiv 1 \pmod 3$ appearing to exponents at least $2$ and $\equiv 1$ or $4\pmod 6$, other primes $\equiv 1 \pmod 3$ appearing to exponents that are multiples of $3$, and primes $\equiv 2\pmod 3$ appearing to even exponents.   
%

According to Theorem~\ref{delangemethod}, one has for $a=1$ or $2$, $f'=\Delta$ or $2 \Delta$, and $f''=\Delta^2$ or $2 \Delta^2$,
\begin{align} \label{Zfaff1}
 |\{n<x: n \in Z(f,a; f',f'',1)\} | 
\sim 
\Big(\sum_{s  \in \Sc_{f,f''}} \frac{C(\Uc,s)}{s}  \Big)
\Big( \frac{1}{6} \Big) \Big( \frac{1}{2} \Big) \frac{x}{(\log x)^{\frac{1}{2}}}\log \log x, 
\end{align}
where
$$
C(\Uc,s) = {C(\Uc)} \prod_{\ell \mid s \atop \ell \equiv 1 \pmod{3}} \Big(1+\frac{1}{\ell}\Big)^{-1}
$$
and 
\begin{align}
\label{constant}
C(\Uc) &= \frac{1}{\Gamma(\frac 12)} \prod_{p\equiv 1 \pmod 3} \Big(1+\frac 1p\Big)\Big(1-\frac 1p\Big)^{\frac 12} 
\prod_{p\not\equiv 1\pmod 3} \Big(1-\frac 1p \Big)^{\frac 12} \nonumber\\
& = 
\frac{\root 4 \of 3}{\pi\sqrt{2}} \prod_{p\equiv 1\pmod 3} \Big(1-\frac{1}{p^2}\Big)^{\frac 12} = 0.2913\ldots.  
\end{align} 
In (\ref{Zfaff1}), the factor $\frac{1}{6}$ is $\delta(\Mc_{f'',f'})$ and the factor $\frac{1}{2}$ is $\frac{|\Delta|}{|\Gamma|}$ (and this factor would disappear if we counted cases $a=1$ and $a=2$ together).



Adding up all the possibilities, using (\ref{Zfapart}), we finally obtain that 
$$
\pi(\Delta^2,x) \sim c(\Delta^2) \frac{x}{(\log x)^{\frac{1}{2}}}\log \log x,
$$ 
where
$$ 
c(\Delta^2)=\frac{1}{3} \sum_{s\in \Sc_{f,f}\cup \Sc_{f,2f}} \frac{C(\Uc,s)}{s} = 
\frac{C(\Uc)}{3} \prod_{\ell \equiv 1\pmod 3} \Big(1-\frac 1{\ell^3}\Big)^{-1} \prod_{\ell \equiv 2\pmod 3} \Big(1-\frac{1}{\ell^2}\Big)^{-1}.
$$

\subsubsection{Calculation of $\pi_\sf(\Delta^k,x)$ for $k=1,2,4,5,7,10$}

We describe the calculation of $c_\sf(\Delta^k)$ in these examples, which is simpler than evaluating $c(\Delta^k)$. For $h \geq 0$ an integer, 
let $\Mc_h$ be the set of integers that are the product of exactly $h$ distinct primes, all congruent to $2$ or $5$ modulo $9$.  This is a multi-frobenian set, attached to the cyclotomic extension $\Q(\mu_9)/\Q$ of Galois group $G=(\Z/9\Z)^\ast$, and one has $\delta(\Mc_h) = \frac{2^h}{ h! 6^h} = \frac{1}{h! 3^h}$. One can show that for $k=1,2,4,5,7,10$ and $h=h(\Delta^k)=0,1,2,3,4,4$ respectively,
and for $m' \in \Mc_h$, one has (with $f=\Delta^k$) that $T_{m'} f\neq 0$, and in fact 
$T_{m'} f = \Delta$ or $T_{m'} f = 2 \Delta$. 
  Also note that for $f' = \Delta$ or $f'=2 \Delta$, one also has $T_m f' = \Delta$ or $2 \Delta$ for any 
square-free $m$ with prime factors in $\Uc$, so that $a_m(f')\neq 0$. 
  
  Thus, the main contribution to 
 $Z_\sf(\Delta^k)$ is the set we call $\Zc(\Uc,\Mc_h,{1})$, namely the set of all square-free numbers $m m'$, where $m$ is any 
product of primes in $\Uc$ (i.e. congruent to $1 \pmod{3}$), and $m' \in \Mc_h$. According to our Theorem 6,
$$
\pi_\sf(\Delta^k,x) \sim \frac{C(\Uc)}{h! 3^h } \frac{x}{(\log x)^{1/2}}  (\log \log x)^h,\ \ k=1,2,4,5,7,10, 
$$
where $h=h(k)=0$, $1$, $2$, $3$, $4$, $4$ respectively, and $C(\Uc)$ is the constant appearing in \eqref{constant}.


 \subsection{Example of a non-pure form in the case $N=1$, $p=7$}

Examples of powers of $\Delta$ that are not pure arise $\pmod 7$.
There one has $\Delta^2 = f+ \Delta$, where $f=\Delta^2-\Delta$ is an eigenform for all the Hecke operators $T_\ell$ ($\ell$ a prime number with $\ell \neq 7$), with eigenvalue $\ell^2+\ell^3$. The Galois representation $\rhob_f$ corresponding to this system is $\omega_7^2 \oplus \omega_7^3$ where $\omega_7$ is the cyclotomic character modulo $7$. The set $\Nc_{\rhob_f}$ is the set of prime numbers $\ell$ that are congruent to $-1$ modulo $7$, and $\Uc_{\rhob_f}$ the set of prime numbers congruent to $1,2,3,4,5$ modulo $7$. One has $\alpha(f)=\alpha(\rhob_f)=1/6$. 

The form $\Delta$ is also of course an eigenform, with system of eigenvalues $\ell+\ell^4$ for $T_\ell$, corresponding to the Galois representation $\rhob_\Delta = \omega_7 \oplus \omega_7^4$ with $\alpha(\rhob_\Delta)  = 1/2$.

The decomposition $\Delta^2 = f + \Delta$ is thus the canonical decomposition into pure forms, and the pure form $\Delta$ can be neglected
because  $\alpha(\Delta)>\alpha(f)$. One finds
$$
\pi_\sf(\Delta^2,x)  \sim \pi_\sf(f,x)  \sim C(\Uc_{\rhob_f}) \frac{x}{(\log x)^{1/6}} 
$$
with
 $$
 C(\Uc_{\rhob_f}) = \frac{1}{\Gamma(5/6)} \prod_{\ell \equiv 1,2,3,4,5 \pmod{7} } \Big(1+\frac{1}{\ell}\Big) \Big(1-\frac 1{\ell}\Big)^{\frac 56} \prod_{\ell \equiv -1,0 \pmod{7}} \Big(1- \frac 1\ell\Big)^{\frac 56}
$$
so that 
$$
\pi_\sf(\Delta^2,x) \sim c_\sf(\Delta^2) \frac{x}{(\log x)^{1/6}},\ \ c_\sf(\Delta^2)=C(\Uc_{\rhob_f})=0.5976\ldots.
$$

\end{document}